\renewcommand{\arraystretch}{1.1}
\title{Continuous-time Models\\ for Stochastic Optimization Algorithms}
\author{
Antonio Orvieto\\Department of Computer Science\\ETH Zurich, Switzerland~\thanks{Correspondence to \url{orvietoa@ethz.ch}{}}
 \And
Aurelien Lucchi\\Department of Computer Science\\ETH Zurich, Switzerland} 
\DeclareMathOperator{\tr}{Tr}
\DeclareMathOperator{\var}{\mathbb{C}ov}
\DeclareMathOperator*{\argmin}{arg\,min}
\newcommand{\R}{\mathbb{R}}
\newcommand{\T}{\mathfrak{T}}
\newcommand{\E}{\mathbb{E}}
\newcommand{\I}{I}
\newcommand{\N}{\mathbb{N}}
\newcommand{\EE}{\mathcal{E}}
\newcommand{\F}{\mathcal{F}}
\newmdtheoremenv{thmapp}{Theorem}[section]
\newmdtheoremenv{propapp}{Proposition}[section]
\newtheorem{definition}{Definition}
\newtheorem{remark}{Remark}
\newtheorem{example}{Example}[section]
\newmdtheoremenv{frm-thm}{Theorem}
\newmdtheoremenv{frm-thm-app}{Theorem}[section]
\newmdtheoremenv{corollary}{Corollary}
\newmdtheoremenv{lemma}{Lemma}
\newenvironment{chapquote}[2][2em]
  {\setlength{\@tempdima}{#1}%
   \def\chapquote@author{#2}%
   \parshape 1 \@tempdima \dimexpr\textwidth-2\@tempdima\relax%
   \itshape}
  {\par\normalfont\hfill--\ \chapquote@author\hspace*{\@tempdima}\par\bigskip}
\begin{document}

\maketitle

\begin{abstract}
We propose new continuous-time formulations for first-order stochastic optimization algorithms such as mini-batch gradient descent and variance-reduced methods.
We exploit these continuous-time models, together with simple Lyapunov analysis as well as tools from stochastic calculus, in order to derive convergence bounds for various types of non-convex functions. Guided by such analysis, we show that the same Lyapunov arguments hold in discrete-time, leading to matching rates. In addition, we use these models and It\^{o} calculus to infer novel insights on the dynamics of SGD, proving that a decreasing learning rate acts as time warping or, equivalently, as landscape stretching. 
\end{abstract}


\section{Introduction}
We consider the problem of finding the minimizer of a smooth non-convex function $f:\R^d\to\R$: $x^*:= \argmin_{x\in \R^d} f(x)$. We are here specifically interested in a finite-sum setting which is commonly encountered in machine learning and where $f(\cdot)$ can be written as a sum of individual functions over datapoints. In such settings, the optimization method of choice is mini-batch Stochastic Gradient Descent (MB-SGD) which simply iteratively computes stochastic gradients based on averaging from sampled datapoints. The advantage of this approach is its cheap per-iteration complexity  which is independent of the size of the dataset. This is of course especially relevant given the rapid growth in the size of the datasets commonly used in machine learning applications. However, the steps of MB-SGD have a high variance, which can significantly slow down the speed of convergence~\cite{ghadimi2013stochastic,kushner2003stochastic}. In the case where $f(\cdot)$ is a strongly-convex function, SGD with a decreasing learning rate achieves a sublinear rate of convergence in the number of iterations, while its deterministic counterpart (i.e. full Gradient Descent, GD) exhibits a linear rate of convergence. 

There are various ways to improve this rate. The first obvious alternative is to systematically increase the size of the mini-batch at each iteration: ~\cite{friedlander2012hybrid} showed that a controlled increase of the mini-batch size yields faster rates of convergence. An alternative, that has become popular recently, is to use variance reduction (VR) techniques such as
SAG~\cite{roux2012stochastic}, SVRG~\cite{johnson2013accelerating},  SAGA~\cite{defazio2014saga}, etc. The high-level idea behind such algorithms is to re-use past gradients on top of MB-SGD in order to reduce the variance of the stochastic gradients. This idea leads to faster rates: for general $L$-smooth objectives, both SVRG and SAGA find an $\epsilon$-approximate stationary point\footnote{A point $x$ where $\|\nabla f(x)\|\le \epsilon$.} in $\mathcal{O}\left(Ln^{2/3}/\epsilon\right)$ stochastic gradient computations~\cite{allen2016variance, reddi2016stochastic}, compared to the $\mathcal{O}\left(Ln/\epsilon\right)$ needed for GD~\cite{nesterov2018lectures} and the $\mathcal{O}\left(L/\epsilon^2\right)$ needed for MB-SGD~\cite{ghadimi2013stochastic}. As a consequence, most modern state-of-the-art optimizers designed for general smooth objectives (Natasha~\cite{allen2017natasha}, SCSG~\cite{lei2017non}, Katyusha~\cite{allen2017katyusha}, etc) are based on such methods. The optimization algorithms discussed above are typically analyzed in their discrete form. One alternative that has recently become popular in machine learning is to view these methods as continuous-time processes. By doing so, one can take advantage of numerous tools from the field of differential equations and stochastic calculus.
This has led to new insights about non-trivial phenomena in non-convex optimization~\cite{mandt2016variational,jastrzkebski2017three,su2014differential} and has allowed for more compact proofs of convergence for gradient methods~\cite{shi2019acceleration, mertikopoulos2018convergence, krichene2017acceleration}. This perspective appears to be very fruitful, since it also has led to the development of new discrete algorithms~\cite{zhang2018direct,betancourt2018symplectic,xu2018accelerated,xu2018continuous}. Finally, this connection goes beyond the study of algorithms, and can be used for neural network architecture design~\cite{ciccone2018nais,chen2018}. 

This success is not surprising, given the impact of continuous-time models in various scientific fields including, e.g., mathematical finance, where these models are often used to get closed-form solutions for derivative prices that are not available for discrete models (see e.g. the celebrated Black-Scholes formula~\cite{black1973pricing}, which is derived from It\^{o}'s lemma~\cite{ito1944109}). Many other success stories come from statistical physics~\cite{einstein1905motion}, biology~\cite{goel2016stochastic} and engineering.
Nonetheless, an important question, which has encouraged numerous debates (see e.g.~\cite{wigner1990unreasonable}), is about the reason behind the effectiveness of continuous-time models. In optimization, this question is partially addressed for deterministic accelerated methods by the works of~\cite{wilson2016lyapunov,betancourt2018symplectic,shi2019acceleration} that provide a link between continuous and discrete time. However, we found that this problem has received less attention in the context of stochastic non-convex optimization and does not cover recent developments such as~\cite{johnson2013accelerating}. We therefore focus on the latter setting for which we provide detailed comparisons and analysis of continuous- and discrete-time methods.
The paper is organized as follows:
\begin{enumerate}[leftmargin=0.3in]
    \setlength\itemsep{0em}
    \item In Sec.~\ref{sec:models} we build new continuous-time models for SVRG and mini-batch SGD --- which include the effect of decaying learning rates and increasing batch-sizes. We show existence and uniqueness of the solution to the corresponding stochastic differential equations.
    \item In Sec.~\ref{sec:non-asy} we derive novel and interpretable non-asymptotic convergence rates for our models, using the elegant machinery provided by stochastic calculus. We focus on various classes of non-convex functions relevant for machine learning (see list in Sec.~\ref{sec:rates}).
    \item In Sec.~\ref{sec:non-asy-discrete} we complement each of our rates in continuous-time with equivalent results for the algorithmic counterparts, using the same Lyapunov functions. This shows an \textit{algebraic equivalence} between continuous and discrete time and proves the effectiveness of our modeling technique. To the best of our knowledge, most of these rates (in full generality) are novel~\footnote{We derive these rates in App.~\ref{sec:proofs_dt} and summarize them in Tb.~\ref{tb:summary_rates_GD}.}. 
    \item In Sec.~\ref{sec:time_stretch} we provide a new interpretation for the distribution induced by SGD with decreasing stepsizes based on the {\O}ksendal's time change formula --- which reveals an underlying time warping phenomenon that can be used for designing Lyapunov functions.
    \item In Sec.~\ref{sec:land-stretch} we provide a dual interpretation of this last phenomenon as landscape stretching.
\end{enumerate}
 
 At a deeper level, our work proves that continuous-time models can adequately guide the analysis of stochastic gradient methods and provide new thought-provoking perspectives on their dynamics.

\vspace{-1mm}
\section{Unified models of stochastic gradient methods}
\label{sec:models}
\vspace{-1mm}
Let $\{f_i\}_{i=1}^N$ be a collection of functions s.t. $f_i : \mathbb{R}^d \to \mathbb{R}$ for any $i \in [N]$  and $f(\cdot):= \frac{1}{N}\sum_{i = 1}^N f_i(\cdot)$. 
In order to minimize $f(\cdot)$,
first-order stochastic optimization algorithms rely on some noisy (but usually unbiased) estimator $\mathcal{G}(\cdot)$ of the gradient $\nabla f(\cdot)$. In its full generality, Stochastic Gradient Descent (SGD) builds a sequence of estimates of the solution $x^*$ in a recursive way:
\begin{equation*}
\tag{SGD}
x_{k+1} = x_{k} - \eta_k \mathcal{G}\left(\{x_i\}_{0\le i\le k},k\right),
\label{eq:SGD}
\end{equation*}
where $(\eta_k)_{k\ge0}$ is a non-increasing deterministic sequence of positive numbers called the \textit{learning rates sequence}. Since $\mathcal{G}(x_k, k)$ is stochastic, $\{x_k\}_{k\ge0}$ is a stochastic process on some countable probability space $(\Omega,\mathcal{F},\mathbb{P})$. 
Throughout this paper, we denote by $\{\mathcal{F}_k\}_{k\ge 0}$ the natural filtration induced by $\{x_k\}_{k\ge 0}$; by $\E$ the expectation over all the information $\mathcal{F}_\infty$ and by $\E_{\mathcal{F}_{k}}$ the conditional expectation given the information at step $k$. We consider the two following popular designs for $\mathcal{G}(\cdot)$.
\vspace{-1mm}
\paragraph{i) MB gradient estimator.} The mini-batch gradient estimator at iteration $k$ is $\mathcal{G}_{\text{MB}}(x_k,k) := \frac{1}{b_k}\sum_{i_k \in \Omega_k}\nabla f_{i_k}(x_k)$, where $b_k:= |\Omega_k|$ and the elements of $\Omega_k$ (the \textit{mini-batch}) are sampled at each iteration $k$ independently, uniformly and with replacement from $[N]$. Since $\Omega_k$ is random, $\mathcal{G}_{\text{MB}}(x)$ is a random variable with conditional (i.e. taking out randomness in $x_k$) mean and covariance
\begin{equation}
  \E_{\F_{k-1}}\left[\mathcal{G}_{\text{MB}}(x_k,k)\right] = \nabla f(x_k),\quad \quad \var_{\F_{k-1}}\left[\mathcal{G}_{\text{MB}}(x_k,k)\right] = \frac{\Sigma_{\text{MB}}(x_k)}{b_k}, \label{eqn:STOC_MB_2}
\end{equation}
where $\Sigma_{\text{MB}}(x):=\frac{1}{N}\sum_{i=1}^N \left(\nabla f(x) -\nabla f_{i}(x)\right)\left(\nabla f(x) -\nabla f_{i}(x)\right)^T$ is the one-sample covariance.
\vspace{-1mm}
\paragraph{ii) VR gradient estimator.}
The basic idea of the original SVRG algorithm introduced in~\cite{johnson2013accelerating} is to compute the full gradient at some chosen \textit{pivot} point and combine it with stochastic gradients computed at subsequent iterations. Combined with mini-batching~\cite{reddi2016stochastic}, this gradient estimator is:
$$\mathcal{G}_{\text{VR}}(x_k,\tilde{x}_k,k) := \frac{1}{b_k}\sum_{i_k \in \Omega_k}\nabla f_{i_k}(x_k)- \nabla f_{i_k}(\tilde{x}_k)+\nabla f(\tilde{x}_k),$$
where $\tilde{x}_k\in \{x_0,x_1,\dots,x_{k-1}\}$ is the pivot used at iteration $k$. This estimator is unbiased, i.e. $\E_{\F_{k-1}}\left[\mathcal{G}_{\text{VR}}(x_k,\tilde{x}_k,k)\right]  = \nabla f(x_k)$. Its covariance is $\var_{\F_{k-1}}\left[\mathcal{G}_{\text{VR}}(x_k,\tilde{x}_k, k)\right]=\frac{\Sigma_{\text{VR}}(x_k,\tilde{x}_k)}{b_k}$ with\\
\begin{small}$\Sigma_{\text{VR}}(x,y):=\frac{1}{N}\sum_{i=1}^N\left(\nabla f_{i}(x)- \nabla f_{i}(y)+\nabla f(y)-\nabla f(x)\right)\left(\nabla f_{i}(x)- \nabla f_{i}(y)+\nabla f(y)-\nabla f(x)\right)^T$\end{small}.





\subsection{Building the perturbed gradient flow model}
We take inspiration from~\cite{li2015stochastic} and~\cite{he2018differential} and build continuous-time models for SGD with either the MB or the SVRG gradient estimators. The procedure has three steps.
\begin{enumerate}[label=(\text{S}\arabic*),leftmargin=*]
    \item We first define the \textit{discretization stepsize} $h:=\eta_0$ --- this variable is essential to provide a link between continuous and discrete time. We assume it to be fixed for the rest of this subsection. Next, we define the \textit{adjustment-factors sequence} $(\psi_k)_{k\ge 0}$ s.t. $\psi_k=\eta_k/ h$ (cf. Eq.~9 in~\cite{li2015stochastic}). In this way --- we decouple the two information contained in $\eta_k$: $h$ controls the overall size of the learning rate and $\psi_k$ handles its variation\footnote{A popular choice (see e.g. ~\cite{moulines2011non}) is $\eta_k = C k^{-\alpha}$, $\alpha\in[0,1]$. Here, $h=C$ and $\psi_k = k^{-\alpha}\in[0,1]$.} during training. 

    \item Second, we write SGD as $x_{k+1} = x_k - \eta_k (\nabla f(x_k) + V_k)$,
where the error $V_k$ has mean zero and covariance $\Sigma_k$. Next, let $\Sigma^{1/2}_k$ be the principal square root\footnote{The unique positive semidefinite matrix such that $\Sigma_k = \Sigma^{1/2}_k\Sigma^{1/2}_k$.} of $\Sigma_k$, we can write SGD as
\begin{equation}
    \tag{PGD}
    x_{k+1} = x_k - \eta_k \nabla f(x_k) -\eta_k \Sigma^{1/2}_k Z_k,
\end{equation}

where $Z_k$ is a random variable with zero mean and unit covariance\footnote{Because $\Sigma^{1/2}_k \ Z_k$ has the same distribution as $V_k$, conditioned on $x_k$.}. In order to build \textit{simple} continuous-time models, we assume that each $Z_k$ is Gaussian distributed: $Z_k\sim\mathcal{N}(0_d,\I_d)$. To highlight this assumption, we will refer to the last recursion as Perturbed Gradient Descent (PGD)~\cite{daneshmand2018escaping}. In Sec.~\ref{sec:gauss} we motivate why this assumption, which is commonly used in the literature~\cite{li2015stochastic}, is not restrictive for our purposes. By plugging in either $\Sigma_k = \Sigma_{\text{MB}}(x_k)/b_k$ or $\Sigma_k =\Sigma_{\text{VR}}(x_k,\tilde{x}_k)/b_k$, we get a discrete model for SGD with the MB or VR gradient estimators.



\item Finally, we lift these PGD models to continuous time. The first step is to rewrite them using $\psi_k$:
\begin{equation*}
    \tag{MB-PGD}
    x_{k+1} = x_k -\underbrace{\psi_k \nabla f(x_k)}_{\text{ adjusted gradient drift}} h  +  \underbrace{\psi_k\sqrt{h/b_k} \ \sigma_{\text{MB}}(x_k)}_{\text{adjusted mini-batch volatility}} \ \sqrt{h} Z_k \ \ \ \ \ \ \ \ 
\end{equation*}

\begin{equation*}
    \tag{VR-PGD}
    x_{k+1} = x_k -\underbrace{\psi_k \nabla f(x_k)}_{\text{adjusted gradient drift}} h + \underbrace{\psi_k\sqrt{h/b_k} \ \sigma_{\text{VR}} (x_k,x_{k-\xi_k})}_{\text{adjusted variance-reduced volatility}} \ \sqrt{h} Z_k
\end{equation*}

where $\sigma_{\text{MB}}(x) := \Sigma^{1/2}(x)$, $\sigma_{\text{VR}}(x,y) := \Sigma_{\text{VR}}^{1/2}(x,y)$ and $\xi_k\in[k]$ quantifies the pivot staleness. Readers familiar with stochastic analysis might recognize that MB-PGD and VR-PGD are the steps of a numerical integrator (with stepsize $h$) of an SDE and of an SDDE, respectively. For convenience of the reader, we give an hands-on introduction to these objects in App.~\ref{sec:stoc_cal}.
\end{enumerate}

The resulting continuous-time models, which we analyse in this paper, are
\begin{tcolorbox}
\vspace{-4mm}
\begin{equation*}
\tag{MB-PGF}
dX(t) = -\psi(t)\nabla f(X(t)) \ dt + \psi(t)\sqrt{h/b(t)} \ \sigma_{\text{MB}}(X(t)) \ dB(t) \ \ \ \ \ \ \ \ \ \ \ \ \ \ \ \ \ \ \ \ 
\end{equation*}
\begin{equation*}
\tag{VR-PGF}
dX(t) = -\psi(t)\nabla f(X(t)) \ dt + \psi(t)\sqrt{h/b(t)} \ \sigma_{\text{VR}}(X(t),X(t-\xi(t))) \ dB(t)
\end{equation*}
\vspace{-4mm}
\end{tcolorbox}
where 
\begin{itemize}[leftmargin=*]
\setlength\itemsep{0.1em}
\item  $\xi:\R_+\to[0,\T]$, the \textit{staleness function}, is s.t. $\xi(h k)=\xi_k$ for all $k\ge 0$;
\item  $\psi(\cdot)\in\mathcal{C}^1(\R_+,[0,1])$, the \textit{adjustment function}, is s.t. $\psi(h k)=\psi_k$ for all $k\ge 0$ and $\frac{d\psi(t)}{dt} \le 0$;
\item $b(\cdot) \in\mathcal{C}^1(\R_+,\R_+)$, the \textit{mini-batch size function} is s.t. $b(hk)=b_k$ for all $k\ge0$ and $b(t)\ge1$;
\item $\{B(t)\}_{t\ge 0}$ is a $d-$dimensional Brownian Motion on some filtered probability space.
\end{itemize}

We conclude this subsection with some important remarks and clarifications on the procedure above.
\vspace{-1mm}
\label{sec:gauss}
\paragraph{On the Gaussian assumption.}In (S2) we assumed that $Z_k$ is Gaussian distributed. If the mini-batch size $b_k$ is large enough and the gradients are sampled from a distribution with finite variance, then the assumption is sound: indeed, by the Berry–Esseen Theorem (see e.g.~\cite{durrett2010probability}), $Z_k$ approaches $\mathcal{N}(0_d,\I_d)$ in distribution with a rate $\mathcal{O}\left(1/\sqrt{b_k}\right)$. However, if $b_k$ is small or the underlying variance is unbounded, the distribution of $Z_k$ has heavy tails~\cite{simsekli2019tail}.
Nonetheless, in the large-scale optimization literature, the gradient variance is generally assumed to be bounded~(see e.g.~\cite{ghadimi2013stochastic},~\cite{bottou2018optimization}) --- hence, we keep this assumption, which is practical and reasonable for many problems (likewise assumed in the related literature~\cite{raginsky2012continuous,mertikopoulos2018convergence,krichene2017acceleration,li2015stochastic,li2017convergence}). Also, taking a different yet enlightening perspective, it is easy to see that (see Sec.~4 of~\cite{bottou2018optimization}), if one cares only about expected convergence guarantees --- only the first and the second moments of the stochastic gradients have a quantitative effect on the rate.
\vspace{-1mm}
\paragraph{Approximation guarantees.}Recently,~\cite{hu2017diffusion,li2017convergence} showed that for a special case of MB-PGF ($\psi_k = 1$, and $b_k$ constant), its solution $\{X(t)\}_{0\le t\le T}$ compares to SGD as follows: let $K = \lfloor T/h \rfloor$ and consider the iterates $\{x_k\}_{k\in[K]}$ of mini-batch SGD (i.e. \textit{without} Gaussian assumption) with fixed learning rate $h$. Under mild assumptions on $f(\cdot)$, there exists a constant $C$ (independent of $h$) such that $\|\E[x_k]-\E[X(kh)]\|\le C h$ for all $k\in[K]$. Their proof argument relies on semi-group expansions of the solution to the Kolmogorov backward equation, and can be adapted to provide a similar result for our (more general) equations. However, this approach to motivate the continuous-time formulation is\textit{ very limited} --- as $C$ depends exponentially on $T$~(see also~\cite{shi2019acceleration}). Nonetheless, under strong-convexity, some uniform-in-time (a.k.a. \textit{shadowing}) results were recently derived in~\cite{orvietoshadowing,feng2019uniform}. In this paper, we take a different approach (similarly to~\cite{shi2019acceleration} for deterministic methods)  and provide instead matching convergence rates in continuous and in discrete time using the same Lyapunov function. We note that this is still a very strong indication of the effectiveness of our model to study SGD, since it shows an \textit{algebraic equivalence} between the continuous and the discrete case.
\vspace{-1mm}
\paragraph{Comparison to the "ODE method".}A powerful technique in stochastic approximation~\cite{kushner2003stochastic} is to study SGD through the \textit{deterministic} ODE $\dot X = -\nabla f(X)$. A key result is that SGD, with decreasing learning rate under the Robbins Monro~\cite{robbins1951stochastic} conditions, behaves like this ODE in the limit. Hence the ODE can be used to characterize the \emph{asymptotic} behaviour of SGD. In this work we instead take inspiration from more recent literature~\cite{li2017convergence} and build \textit{stochastic} models which include the effect of a decreasing learning rate into the drift and the volatility coefficients through the adjustment function $\psi(\cdot)$. This allows, in contrast to the ODE method\footnote{This method is instead suitable to assess almost sure convergence and convergence in probability, which are not considered in this paper for the sake of delivering convergence rates for population quantities.}, to provide \emph{non-asymptotic} arguments and convergence rates.
\vspace{-1mm}
\paragraph{Local minima width.} Our models confirm, as noted in~\cite{jastrzkebski2017three}, that the ratio of (initial) learning rate $h$ to batch size $b(t)$ is a determinant factor of SGD dynamics. Compared to~\cite{jastrzkebski2017three}, our model is more general: indeed, we will see in Sec.~\ref{sec:land-stretch} that the adjustment function also plays a fundamental role in determining the width of the final minima --- since it acts like a "function stretcher".
\vspace{-1mm}
\subsection{Existence and uniqueness }
\vspace{-1mm}
Prior works that take an approach similar to ours~\cite{krichene2015accelerated,he2018differential,mertikopoulos2018convergence}, assume the one-sample volatility $\sigma(\cdot)$ to be Lipschitz continuous. This makes the proof of existence and uniqueness straightforward (cf. a textbook like~\cite{mao2007stochastic}), but we claim such assumption is not trivial in our setting where $\sigma(\cdot)$ is data-dependent. Indeed, $\sigma(\cdot)$ is the result of a square root operation on the gradient covariance --- and the square root function is not Lipschitz around zero. App.~\ref{sec:ex_uni} is dedicated to a rigorous proof of existence and uniqueness, which is verified under the following condition:

\textbf{(H)} \ \ \ \ \ \ \ \ Each $f_i(\cdot)$ is $\mathcal{C}^3$, with bounded third derivative and $L$-smooth.

This hypothesis is arguably not restrictive as it is usually satisfied by many loss functions encountered in machine learning. As a result, under \textbf{(H)}, with probability $1$ the realizations of the stochastic processes $\{f(X(t))\}_{t>0}$ and $\{X(t)\}_{t>0}$ are continuous functions of time.


\section{Matching convergence rates in continuous and discrete time} 
\label{sec:rates}
\vspace{-1mm}
Even though in optimization, convex functions are central objects of study, many interesting objectives found in machine learning are non-convex. However, most of the time, such functions still exhibit some regularity. For instance,~\cite{hardt2016gradient} showed that linear LSTMs induce weakly-quasi-convex objectives.  

\textbf{(H\begin{tiny}{WQC}\end{tiny})} \ \ \  $f(\cdot)$ is $\mathcal{C}^1$ and exists $\tau>0$ and $x^\star$ s.t. $\langle\nabla f(x),x-x^{\star}\rangle \ge \tau(f(x)-f(x^{\star}))$ for all $x\in\R^d$.


Intuitively, \textbf{(H\begin{tiny}{WQC}\end{tiny})} requires the negative gradient to be always aligned with the direction of a global minimum $x^{\star}$. Convex differentiable functions are weakly-quasi-convex (with $\tau=1$), but the WQC class is richer and actually allows functions to be locally concave.
Another important class of problems (e.g., under some assumptions, matrix completion~\cite{sun2016guaranteed}) satisfy the Polyak-\L{}ojasiewicz property, which is the weakest known sufficient condition for GD to achieve linear convergence~\cite{polyak1963gradient}.   

\textbf{(H\begin{tiny}{P\L{}}\end{tiny})} \ \ \ \ \ \ $f(\cdot)$ is $\mathcal{C}^1$ and there exists $\mu>0$ s.t. $\|\nabla f(x)\|^2\ge2\mu(f(x)-f(x^\star))$ for all $x\in\R^d$.

One can verify that if $f(\cdot)$ is strongly-convex, then it is P\L{}. However, P\L{} functions are not necessarily convex. What's more, a broad class of problems (dictionary learning~\cite{arora2015simple}, phase retrieval~\cite{chen2015solving}, two-layer MLPs~\cite{li2017convergence}) are related to a stronger condition: the restricted-secant-inequality~\cite{zhang2016new}.

\textbf{(H\begin{tiny}{RSI}\end{tiny})} \ \ \ \ \ $f(\cdot)$ is $\mathcal{C}^1$ and there exists $\mu>0$ s.t. $\langle\nabla f(x),x-x^*\rangle\ge\frac{\mu}{2}\|x-x^*\|^2$ for all $x\in\R^d$.

In~\cite{karimi2016linear} the authors prove strong-convexity $\Rightarrow$ \textbf{(H\begin{tiny}{RSI}\end{tiny})} $\Rightarrow$ \textbf{(H\begin{tiny}{P\L{}}\end{tiny})} (with different constants).



\subsection{Continuous-time analysis}
\label{sec:non-asy}
\vspace{-1mm}
First, we derive non-asymptotic rates for MB-PGF. For convenience, we define $\varphi(t) := \int_0^t \psi(s) ds$, which plays a fundamental role (see Sec.~\ref{sec:time_stretch}). As~\cite{mertikopoulos2018convergence, krichene2017acceleration}, we introduce a bound on the volatility.

\textbf{(H$\boldsymbol{\sigma}$)} $\sigma^2_*:=\sup_{x\in\R^d}\|\sigma_{\text{MB}}(x)\sigma_{\text{MB}}(x)^T\|_s<\infty$, where $\| \cdot \|_s$ denotes the spectral norm.


\begin{frm-thm} Assume \textbf{(H)}, \textbf{(H$\boldsymbol{\sigma}$)}. Let $t>0$ and $\tilde t\in [0,t]$ be a random time point with distribution $\frac{\psi(\tilde t)}{\varphi(t)}$ for $\tilde t\in[0,t]$ (and $0$ otherwise). The solution to \underline{MB-PGF} is s.t.
$$\E\left[\|\nabla f(X(\tilde t))\|^2\right] \le \frac{f(x_0)-f(x^{\star})}{\varphi(t)}+ \frac{h \ d \ L \ \sigma_*^2}{2 \ \varphi(t)}\int_0^t\frac{\psi(s)^2}{b(s)}ds.$$
\label{prop:CONV_GF_smooth}
\vspace{-3mm}
\end{frm-thm}

\begin{proof} We use the energy function $\EE(x,t) := f(x)-f(x^{\star})$. Details in App.~\ref{sec:proofs_MB_PGF}.
\end{proof}
\begin{frm-thm}
Assume \textbf{(H)}, \textbf{(H$\boldsymbol{\sigma}$)}, \textbf{(H\begin{tiny}{WQC}\end{tiny})}. Let $\tilde t$ be as in Thm.~\ref{prop:CONV_GF_smooth}. The solution to \underline{MB-PGF} is s.t.
\begin{equation}
\tag{W1}
    \E \left[f(X(\tilde t)) -f(x^{\star})\right]\le \frac{\|x_0-x^{\star}\|^2}{2 \ \tau \ \varphi(t) }+ \frac{h \ d \ \sigma^2_*}{2 \ \tau \ \varphi(t)}\int_0^t\frac{\psi(s)^2}{b(s)}ds \ \ \ \ \ \ \ \ \ \ \ \ \ \ \ \ \ \ \ \ \  
\end{equation}
\begin{equation}
    \tag{W2}
    \E\left[(f(X(t))-f(x^{\star}))\right]  \le\frac{\|x_0-x^{\star}\|^2}{2 \ \tau \ \varphi(t) }+ \frac{h \ d \ \sigma^2_*}{2 \ \tau \ \varphi(t) }\int_0^t(L \ \tau \ \varphi(s)+1)\frac{\psi(s)^2}{b(s)}ds.
\end{equation}
    \label{prop:CONV_GF_WQC}
    \vspace{-2mm}
\end{frm-thm}
\begin{proof} We use the energy functions $\EE_1, \EE_2$ s.t. $\EE_1(x) := \frac{1}{2}\|x-x^{\star}\|^2$ and $\EE_2(x,t) := \tau \varphi(t) (f(x))-f(x^{\star})) + \frac{1}{2}\|x-x^{\star}\|^2$ for (W1) and (W2), respectively. Details in App.~\ref{sec:proofs_MB_PGF}.
\end{proof}


\begin{frm-thm}
Assume \textbf{(H)}, \textbf{(H$\boldsymbol{\sigma}$)}, \textbf{(H\begin{tiny}{P\L{}}\end{tiny})}. The solution to \underline{MB-PGF} is s.t.
$$\E[f(X(t))-f(x^{\star})] \le e^{-2\mu \varphi(t)}(f(x_0)-f(x^{\star})) + \frac{h \ d \ L \ \sigma^2_* }{2} \int_0^t\frac{\psi(s)^2}{b(s)} e^{-2\mu (\varphi(t)-\varphi(s))}ds.$$
\label{prop:CONV_GF_PL}
\vspace{-3mm}
\end{frm-thm} 
\begin{proof} We use the energy function $\EE(x,t) := e^{2\mu \varphi(t)}(f(x)-f(x^{\star}))$. Details in App.~\ref{sec:proofs_MB_PGF}.
\end{proof}

\vspace{-1mm}
\paragraph{Decreasing mini-batch size.} From Thm.~\ref{prop:CONV_GF_smooth},~\ref{prop:CONV_GF_WQC},~\ref{prop:CONV_GF_PL}, it is clear that, as it is well known~\cite{bottou2018optimization,balles2016coupling}, a simple way to converge to a local minimizer is to pick $b(\cdot)$ increasing as a function of time. However, this corresponds to dramatically increasing the complexity in terms of gradient computations. In continuous-time, we can account for this by introducing $\beta(t) = \int_0^tb(s)ds$, proportional to the number of computed gradients at time $t$. The complexity in number of gradient computations can be derived by substituting into the final rate the \textit{new time variable} $\beta^{-1}(t)$ instead of $t$. As we will see in Thm.~\ref{thm:time-change}, this concept extends to a more general setting and leads to valuable insights.

\vspace{-1mm}
\paragraph{Asymptotic rates.} Another way to guarantee convergence to a local minimizer is to decrease $\psi(\cdot)$. In App.~\ref{sec:proofs_MB_PGF_decrease} we derive asymptotic rates for $\psi(t) = \mathcal{O}(t^{-a})$ and report the results in Tb.~\ref{tab:asy}. The results \textit{match exactly} the corresponding know rates for SGD, stated under stronger assumptions in~\cite{moulines2011non}. As for increasing $b(\cdot)$, decreasing $\psi(\cdot)$ can also be seen as performing a time warp (see Thm.~\ref{thm:time-change}).

\vspace{-1mm}
\paragraph{Ball of convergence.} For $\psi(t)=1$, the sub-optimality gap derived in App.~\ref{sec:proofs_MB_PGF_ball} \textit{matches}~\cite{bottou2018optimization}.

\begin{table}
\caption{Asymptotic rates for MB-PGF under $\psi(t) = \mathcal{O}(t^{-a})$ in the form $\mathcal{O}(t^{-\beta})$. $\beta$ shown in the table as a function of $a$. "$\boldsymbol{\sim}$" indicates randomization of the result. Rates \textit{match} with Tb.~1 in~\cite{moulines2011non}.}
\vspace{1mm}
\begin{center}
\begin{tabular}{|l | c  | c | c | c |}
\hline
$a$ & \small{\textbf{(H)}, \textbf{(H$\boldsymbol{\sigma}$)}, \textbf{(H\begin{tiny}{P\L{}}\end{tiny})}} & \small{\textbf{(H)}, \textbf{(H$\boldsymbol{\sigma}$)}, \textbf{(H\begin{tiny}{WQC}\end{tiny})}} & \small{\textbf{($\boldsymbol{\sim}$)}, \textbf{(H)}, \textbf{(H$\boldsymbol{\sigma}$)}, \textbf{(H\begin{tiny}{WQC}\end{tiny})}} & \small{\textbf{($\boldsymbol{\sim}$)}, \textbf{(H)}, \textbf{(H$\boldsymbol{\sigma}$)}}\\
         & Cor. \ref{cor:CONV_GF_PL_cor} & Cor. \ref{cor:CONV_GF_WQC_cor} & Cor. \ref{cor:CONV_GF_WQC_cor}
          & Cor. \ref{cor:CONV_GF_smooth_cor}\\
\hline
(0 \ \ \ \ , \ 1/2) & $a$ & $\times$ & $a$ & $a$\\
(1/2\ \ , \ 2/3) & $a$ & $2a-1$ & $1-a$ & $1-a$\\
(2/3\ \ , \ \ \ \ 1) & $a$ & $1-a$ & $1-a$ & $1-a$\\
\hline
\end{tabular}
\end{center}
\label{tab:asy}
\end{table}

In contrast to $\mathcal{G}_{\text{MB}}(\cdot)$, ~\cite{allen2016variance,allen2016improved} have shown that significant speed-ups are hard to obtain from parallel gradient computations (i.e. for $b(t)>1$) using $\mathcal{G}_{\text{VR}}(\cdot)$~\footnote{See e.g. Thm. 7 in~\cite{reddi2016stochastic} for a counterexample.}. Also, our results for MB-PGF as well as prior work~\cite{zhang2013gradient,allen2016variance,allen2016improved,reddi2016stochastic} suggest that linear rates can only be obtained with $\psi(t)=1$. Hence, for our analysis of VR-PGF, we focus on the case $b(t)=\psi(t)=1$. The following result, in the spirit of~\cite{johnson2013accelerating,allen2016improved}, relates to the so-called \textit{Option II} of SVRG.

\begin{frm-thm}
Assume \textbf{(H)}, \textbf{(H\begin{tiny}{RSI}\end{tiny})} and choose $\xi(t) = t-\sum_{j=1}^\infty\delta(t-j\T)$~(sawtooth wave), where $\delta(\cdot)$ is the Dirac delta. Let $\{X(t)\}_{t\ge0}$ be the solution to \underline{VR-PGF} with additional jumps at times $(j\T)_{j\in\N}$: we  pick $X(j\T+\T)$ uniformly in $\{X(s)\}_{j\T\le s< (j+1)\T}$. Then,
\vspace{-2mm}
$$\E[\|X(j\T)-x^\star\|^2]=\left(\frac{2 h L^2\T + 1}{\T(\mu-2hL^2)}\right)^j \|x_0-x^*\|^2.$$
\label{prop:CONV_VR_RSI}
\vspace{-3mm}
\end{frm-thm}

\vspace{-1mm}
\paragraph{Previous Literature (SDEs for MB-PGF).} \cite{mertikopoulos2018convergence} studied dual averaging using a similar SDE model in the convex setting, under vanishing and persistent volatility. Part of their results are similar, yet less general and not directly comparable. \cite{raginsky2012continuous} studied a specific case of our equations, under constant volatility~(see also~\cite{raginsky2017non} and references therein). \cite{krichene2017acceleration,xu2018continuous, xu2018accelerated} studied extentions to~\cite{mertikopoulos2018convergence} including acceleration~\cite{nesterov2018lectures} and AC-SA~\cite{ghadimi2012optimal}. To the best of our knowledge, there hasn't been yet any analysis of SVRG in continuous-time in the literature.

\subsection{Discrete-time analysis and algebraic equivalence}
\label{sec:non-asy-discrete}
We provide matching algorithmic counterparts (using the same Lyapunov function) for all our non-asymptotic rates in App.~\ref{sec:proofs_ct}, along with Tb.~\ref{tb:summary_rates_GD_main} to summarize the results. We stress that the rates we prove in discrete-time (i.e. for SGD with gradient estimators $\mathcal{G}_\text{MB}$ or $\mathcal{G}_\text{VR}$) hold \textit{without Gaussian noise assumption}. This is a key result of this paper, which indicates that the tools of It\^{o} calculus~\cite{ito1944109} ---which are able to provide more compact proofs~\cite{mertikopoulos2018convergence,raginsky2017non} --- yield calculations which are \textit{equivalent} to the ones used to analyze standard SGD. We invite the curious reader to go through the proofs in the appendix to appreciate this correspondence as well as to inspect Tb.~\ref{tb:summary_rates_GD} in the appendix, which provides a comparison of the discrere-time rates with Thms.~\ref{prop:CONV_GF_smooth},~\ref{prop:CONV_GF_WQC},~\ref{prop:CONV_GF_PL} and~\ref{prop:CONV_VR_RSI}.

\bgroup
\def\arraystretch{2.5}
\begin{table*}[!htbp]
\centering
\begin{tabular}{| l | l | l | l |}
\hline
\textbf{\small{Cond.}} & \textbf{\small{Rate (Discrete-time, no Gaussian assumption)}} &\textbf{\scriptsize{Thm.}}\\
\hline
\scriptsize{($\boldsymbol{\sim}$),\textbf{(H-)},\textbf{(H$\boldsymbol{\sigma}$)}}& \small{$\displaystyle\frac{2 \ (f(x_0)-f(x^\star))}{(h  \varphi_{k+1})}+\frac{h\  d \ L\ \sigma^2_*}{(h \varphi_{k+1})}\sum_{i=0}^k \frac{\psi_i^2}{b_i} h\ $} & \ref{prop:CONV_GD_smooth}\\[5pt]
\hline
\scriptsize{($\boldsymbol{\sim}$),\textbf{(H-)},\textbf{(H$\boldsymbol{\sigma}$)},\textbf{(H\begin{tiny}{WQC}\end{tiny})}}& \small{$\displaystyle\frac{\|x_0-x^\star\|^2}{\tau \ (h \varphi_{k+1}) }+\frac{d \ h \ \sigma^2_*}{\tau \ (h \varphi_{k+1}) }\sum_{i=0}^k \frac{\psi_i^2}{b_i}h\ $}  & \ref{prop:CONV_GD_WQC}\\[5pt]
\hline
\scriptsize{\textbf{(H-)},\textbf{(H$\boldsymbol{\sigma}$)},\textbf{(H\begin{tiny}{WQC}\end{tiny})}}  & \small{$\displaystyle \frac{\|x_0-x^\star\|^2}{2 \ \tau \ (h\varphi_{k+1})} + \frac{h \ d \ \sigma^2_*}{2 \ \tau \ (h\varphi_{k+1})} \sum_{i=0}^{k} (1+\tau \varphi_{i+1}L) \frac{\psi_i^2}{b_i}h\ $}  & \ref{prop:CONV_GD_WQC}\\[5pt]
\hline
\scriptsize{\textbf{(H-)},\textbf{(H$\boldsymbol{\sigma}$)},\textbf{(H\begin{tiny}{P\L{}}\end{tiny})}}& \small{$\displaystyle \prod_{i=0}^{k}(1-\mu \ h\psi_i)(f(x_0)-f(x^\star))+ \frac{h \ d \ L \ \sigma^2_* }{2}\sum_{i=0}^k \frac{\prod_{\ell=0}^{k}(1-\mu \ h\psi_\ell)}{ \prod_{j=0}^{i}(1-\mu \ h\psi_l)} \frac{\psi_i^2}{b_i} h\ $}  &\ref{prop:CONV_GD_PL}\\[5pt]
\hline
\scriptsize{\textbf{(H-)},\textbf{(H\begin{tiny}{RSI}\end{tiny})}}  & \small{$\displaystyle \left(\frac{1+2L^2h^2 m}{h m (\mu-2L^2 h)}\right)^j \|x_0-x^*\|^2\ $ (under variance reduction)}  & \ref{prop:CONV_VR_RSI_discrete}\\[5pt]
\hline
\end{tabular}
\caption{Summary of the rates we show in the appendix for SGD with mini-batch and VR, using a Lyapunov argument inspired by the continuous-time analysis. $(\sim)$ indicates randomized output. The reader should compare the results with Thms.~\ref{prop:CONV_GF_smooth},~\ref{prop:CONV_GF_WQC},~\ref{prop:CONV_GF_PL},~\ref{prop:CONV_VR_RSI} (explicit comparison in the first page of the appendix). For the definition of the quantities in the rates, check App.~\ref{sec:proofs_dt}.}
\label{tb:summary_rates_GD_main}
\end{table*}
\egroup

Now we ask the simple question: \textit{why is this the case?} Using the concept of derivation from abstract algebra, in App.~\ref{sec:algebra} we show that the discrete difference operator and the derivative operator enjoy similar algebraic properties. \textit{Crucially}, this is due to the smoothness of the underlying objective --- which implies a chain-rule\footnote{This is a key formula in the continuous-time analysis to compute the derivative of a Lyapunov function.} for the difference operator. Hence, this equivalence is tightly linked with optimization and might lead to numerous insights. We leave the exploration of this fascinating direction to future work.

\vspace{-1mm}
\paragraph{Literature comparison (algorithms).} Even though partial\footnote{The convergence under weak-quasi-convexity using a learning rate $C/\sqrt{k}$ and a randomized output is studied in~\cite{hardt2016gradient} (Prop. 2.3 under Eq.~2.2 of their paper). On the same line, ~\cite{karimi2016linear} studied the convergence for P\L{}using a learning rate $C/\sqrt{k}$ and assuming bounded stochastic gradients. These results are strictly contained in our rates.} results have been derived for the function classes described above in~\cite{hardt2016gradient,reddi2016proximal}, an in-depth non-asymptotic analysis was still missing. Rates in Tb.~\ref{tb:summary_rates_GD}  (stated above in continuous-time as theorems) provide a generalization to the results of~\cite{moulines2011non} to the weaker function classes we considered (we \textit{never} assume convexity). Regarding SVRG, the rate we report uses a proof similar\footnote{In particular, the lack of convexity causes the factor $L^2$ in the linear rate.} to~\cite{allen2016improved,reddi2016stochastic} and is comparable to ~\cite{johnson2013accelerating}~(under convexity).

\section{Insights provided by continuous-time models}
\label{sec:insights}

Building on the tools we used so far, we provide novel insights on the dynamics of SGD. First, in order to consider both MB-PGF and VR-PGF at the same time, we introduce a stochastic\footnote{For MB-PGF, $\{\sigma(t)\}_{t\ge0} := \{\sigma(X(t))\}_{t\ge0}$. For VR-PGF, $\{\sigma(t)\}_{t\ge0} := \{\sigma(X(t),X(t-\xi(t)))\}_{t\ge0}$.} matrix process $\{\sigma(t)\}_{t\ge0}$ adapted to the Brownian motion:
\begin{equation}
    \tag{PGF}
    dX(t) = -\psi(t)\nabla f(X(t)) \ dt + \psi(t)\sqrt{h/b(t)} \sigma(t) \ dB(t). 
\end{equation}
We show that annealing the learning rate through a decreasing $\psi(\cdot)$ can be viewed as performing a \textit{time dilation} or, alternatively, as directly \textit{stretching the objective function}. This view is inspired from the use of Girsanov theorem~\cite{girsanov1960transforming} in finance: a deep result in stochastic analysis which is the formal concept underlying the change of measure from real world to "risk-neutral" world.
\subsection{Time stretching through {\O}ksendal's formula}
\label{sec:time_stretch}
We notice that, in Thm.~\ref{prop:CONV_GF_smooth},\ref{prop:CONV_GF_WQC},\ref{prop:CONV_GF_PL}, the time variable $t$ is \textit{always} filtered through the map $\varphi(\cdot)$. Hence, $\varphi(\cdot)$ seems to act as a new time variable. We show this rigorously using {\O}ksendal's time change formula.

\begin{frm-thm}
Let $\{X(t)\}_{t\ge0}$ satisfy PGF and define $\tau(\cdot) = \varphi^{-1}(\cdot)$, where $\varphi(t) = \int_0^t\psi(s)ds$. For all $t\ge 0$, $X\left(\tau(t)\right) = Y(t)$ in distribution, where $\{Y(t)\}_{t\ge0}$ has the stochastic differential
$$dY(t)=-\nabla f(Y(t)) dt + \sqrt{h \ \psi(\tau(t))/b(\tau(t))} \sigma(\tau(t)) \ d B(t).$$
\vspace{-3mm}
\label{thm:time-change}
\end{frm-thm}

\begin{proof}
We use the substitution formula for deterministic integrals combined with {\O}ksendal's formula for time change in stochastic integrals --- a key result in SDE theory. Details in App.~\ref{sec:time-change-proof}.
\end{proof}

\begin{wrapfigure}[15]{r}{0.32\textwidth}
  \centering
    \vspace{-4.5mm}
    \includegraphics[width=0.9\linewidth]{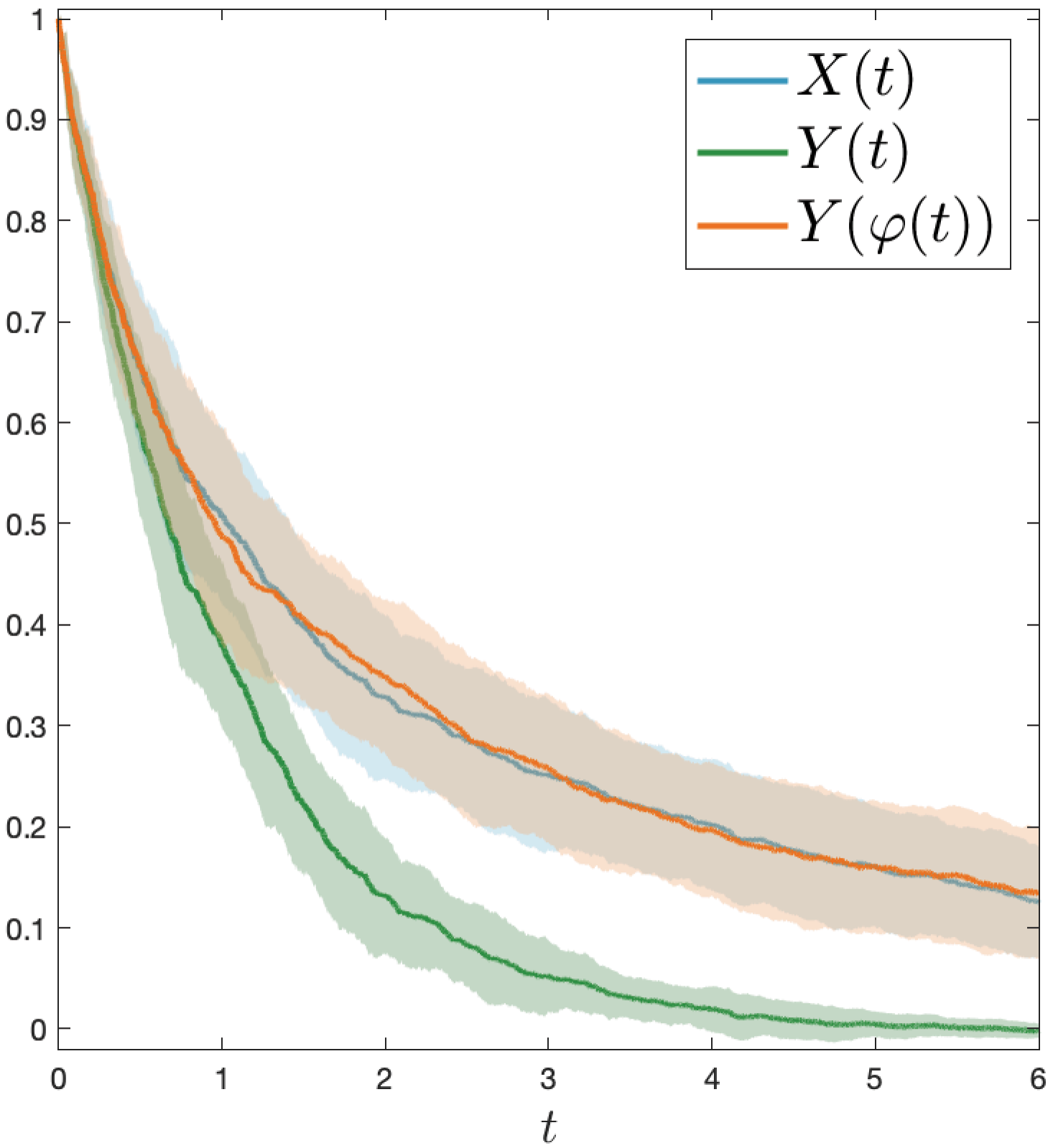}
    \caption{Verification of Thm.~\ref{thm:time-change} on a 1d quadratic (100 samples):  empirically $X(t)\stackrel{d}{=}Y(\varphi(t))$.}
    \label{fig:thm4}
\end{wrapfigure}

\paragraph{Example.} We consider $b(t) = 1$, $\sigma(s) = \sigma \I_d$ and $\psi(t)=1/(t+1)$ (popular annealing procedure~\cite{bottou2018optimization}); we have $\varphi(t) = \log(t+1)$ and $\tau(t) = e^t-1$. $dX(t) = -\frac{1}{t+1}\nabla f(X(t)) dt - \frac{\sqrt{h}\sigma}{t+1}dB(t)$ is s.t. the \textit{sped-up solution} $Y(t)=X(e^{t}-1)$ satisfies
\begin{equation}
dY(t) = -\nabla f(X(t)) dt +\sqrt{h \sigma e^{-t}} dB(t). 
\label{eq:dy_example}
\end{equation}

In the example, Eq.~\eqref{eq:dy_example} is the model for SGD with constant learning rates but rapidly vanishing noise --- which is arguably easier to study compared to the original equation, that also includes time-varying learning rates. Hence, this result draws a connection to SGLD~\cite{raginsky2017non} and to prior work on SDE models~\cite{mertikopoulos2018convergence}, which only considered $\psi(t)=1$. But, most importantly --- Thm.~\ref{thm:time-change} allows for more \textit{flexibility} in the analysis: to derive convergence rates\footnote{The design of the Lyapunov function might be easier if we change time variable. This is the case in our setting, where $\varphi(t)$ comes directly into the Lyapunov functions and would be simply $t$ for the transformed SDE.} one could work with either $X$(as we did in Sec.~\ref{sec:models}) or with $Y$ (and \textit{slow-down} the rates afterwords). 

We verify this result on a one dimensional quadratic, under the choice of parameters in our example, using Euler-Maruyama simulation (i.e. PGD) with $h = 10^{-3}$, $\sigma = 5$. In Fig.~\ref{fig:thm4} we show the mean and standard deviation relative to 20 realization of the Gaussian noise.

Note that in the case of variance reduction, the volatility is decreasing as a function of time~\cite{allen2016variance}, even with $\psi(t)=1$. Hence one gets a similar result without the change of variable.

\subsection{Landspace stretching via solution feedback}
\label{sec:land-stretch}

Consider the (potentially non-convex) quadratic $f(x) = \langle x-x^\star, H(x-x^\star)\rangle$. WLOG we assume $x^\star = 0_d$ and that $H$ is diagonal. For simplicity, consider again the case $b(t)=1$, $\sigma(s) = \sigma \I_d$ and $\psi(t)=1/(t+1)$. PGF reduces to a linear stochastic system:
\vspace{-1mm}
 \begin{wrapfigure}[14]{r}{0.32\textwidth}
  \vspace{1.5mm}
  \centering
    \includegraphics[width=0.9\linewidth]{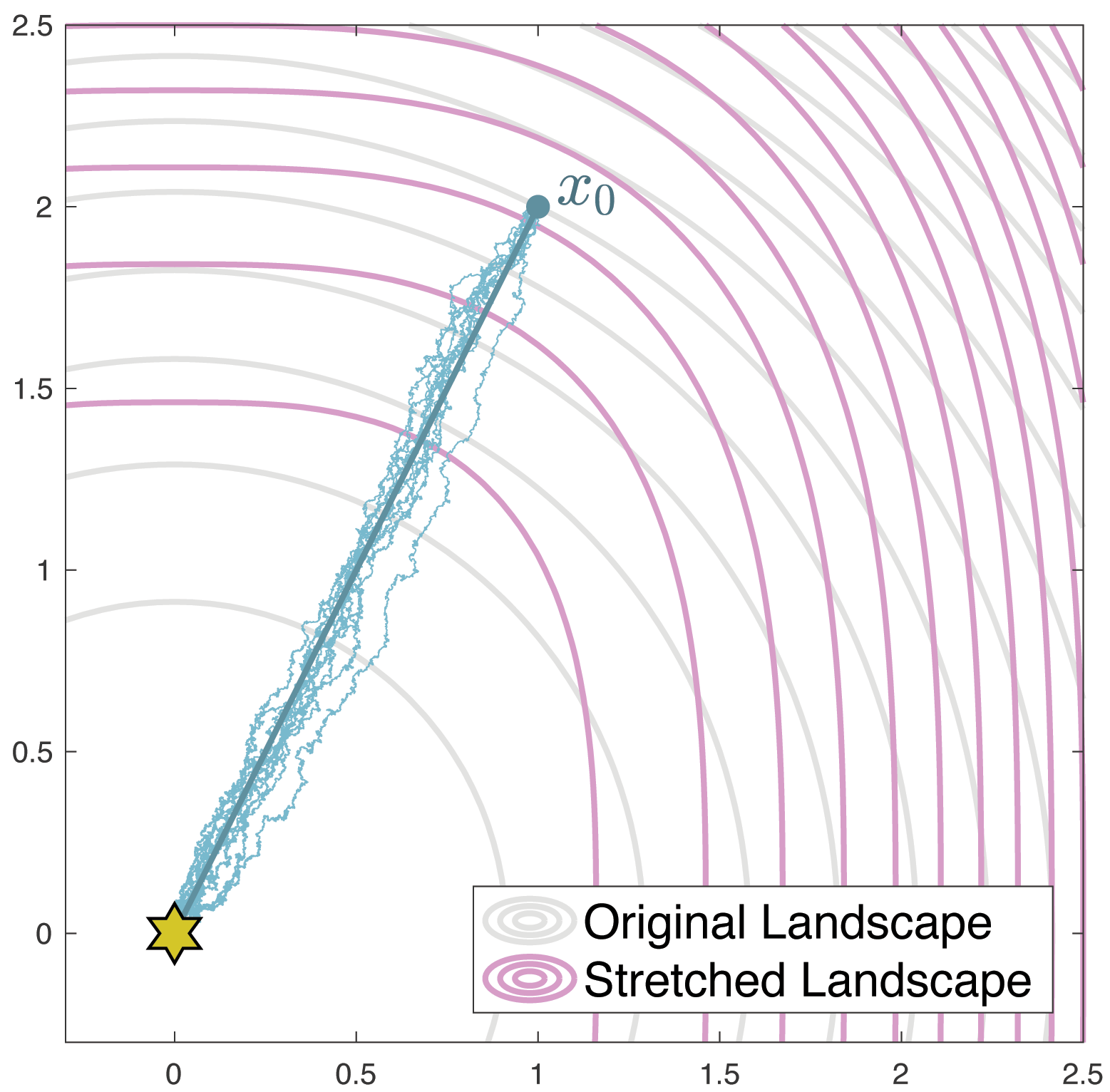}
    \caption{Landscape stretching for an isotropic paraboloid.}
    \label{fig:stretch}
\end{wrapfigure}
$$dX(t) = -\frac{1}{t+1} H X(t)dt + \frac{h \sigma}{t+1}dB(t).$$ 
By the variation-of-constants formula~\cite{mao2007stochastic}, the expectation evolves without bias:  $d\E [X(t)] = -\frac{1}{t+1} H \E [X(t)] dt$. If we denote by $u^i(t)$ the $i$-th coordinate of $\E[X(t)]$ we have $\frac{d}{dt} u^i(t) = -\frac{\lambda_i}{t+1}u^i(t)$, where $\lambda_i$ is the eigenvalue relative to the $i$-th direction. Using separation of variables, we find $u^i(t) =(t+1)^{-\lambda_i} u^i_0$. Moreover, we can invert space and time: $t = \left(u^i_0/u^i(t)\right)^{1/\lambda_i}-1$. \textit{Feeding back} this equation into the original differential --- the system becomes autonomous: $$\frac{d}{dt} u^i(t) = -\lambda_i (u^i_0)^{-\frac{1}{\lambda_i}} u^i(t)^{1+\frac{1}{\lambda_i}}.$$

From this simple derivation we get two important insights on the dynamics of PGF:
\begin{enumerate}[leftmargin=*]
    \setlength\itemsep{0em}
    \item Comparing the solution $u^i(t) =(t+1)^{-\lambda_i} u^i_0$ with the solution one would obtain with $\psi(t)=1$, that is $e^{-\lambda_i t} u^i_0$ --- we notice that the dynamics in the first case is much slower: we get polynomial convergence and divergence (when $\lambda_i\le0$) as opposed to exponential. This quantitatively shows that decreasing the learning rate could slow down (from exponential to polynomial) the dynamics of SGD around saddle points. However, note that, even though the speed is different, $u^i(\cdot)$ and $v^i(\cdot)$ move along the same path\footnote{One is the time-changed version of the other (consider Thm.~\ref{thm:time-change} with $\sigma(t) = 0$), see also Fig.~\ref{fig:thm4}.} by Thm~\ref{thm:time-change}.
    \item Inspecting the equivalent formulation $\frac{d}{dt} u^i(t) = -\lambda_i (u^i_0)^{-\frac{1}{\lambda_i}} u^i(t)^{1+\frac{1}{\lambda_i}}$, we notice with surprise --- that this is a gradient system. Indeed the RHS can be written as $C(\lambda_i,u^i_0)\nabla g_i(u^i(t))$, where $g_i(x) = x^{2+\frac{1}{\lambda_i}}$ is the \textit{equivalent landscape} in the $i$-th direction. In particular, PGF on the simple quadratic $\frac{1}{2}\|x\|^2$ with learning rate decreasing as $1/t$ \textit{behaves in expectation like PGF with constant learning rate on a cubic}. This shines new light on the fact that, as it is well known from the literature~\cite{nemirovsky1983problem}, by decreasing the learning rate we can only achieve sublinear convergence rates on strongly convex stochastic problems. From our perspective, this happens simply because the equivalent stretched landscape has vanishing curvature --- hence, it is not strongly convex.  We illustrate this last example in Fig.~\ref{fig:stretch} and note that the stretching effect is tangent to the expected solution (in solid line).
\end{enumerate}

 We believe the landscape stretching phenomenon we just outlined to be quite general and to also hold asymptotically under strong convexity\footnote{Perhaps also in the neighborhood of any hyperbolic fixed point, with implications about saddle point evasion.}: indeed it is well known that, by Taylor's theorem, in a neighborhood of the solution to a strongly convex problem the cost \textit{behaves as its quadratic approximation}. In dynamical systems, this linearization argument can be made precise and goes under the name of Hartman-Grobman theorem (see e.g.~\cite{perko2013differential}). Since the SDE we studied is memoryless (no momentum), at some point it will necessarily enter a neighborhood of the solution where the dynamics is described by result in this section. We leave the verification and formalization of the argument we just outlined to future research.





\section{Conclusion}

We provided a detailed comparisons and analysis of continuous- and discrete-time methods in the context of stochastic non-convex optimization. Notably, our analysis covers the variance-reduced method introduced in~\cite{johnson2013accelerating}. The continuous-time perspective allowed us to deliver new insights about how decreasing step-sizes lead to time and landscape stretching. 
There are many potential interesting directions for future research such as extending our analysis to mirror-descent or accelerated gradient-descent~\cite{krichene2015accelerated, su2014differential}, or to study state-of-the-art stochastic non-convex optimizers such as Natasha~\cite{allen2017natasha}. Finally, we believe it would be interesting to expand the work of~\cite{li2015stochastic,li2017convergence} to better characterize the convergence of MB-SGD and SVRG to the SDEs we studied here, perhaps with some asymptotic arguments similar to the ones used in mean-field theory~\cite{benaim1998recursive,benaim2008class}.   

\section*{Acknowledgements}
The first author would like to thank Enea Monzio Compagnoni for his proof of Theorem C.1 and Thomas Hofmann for his valuable comments on the first version of this manuscript.

\bibliographystyle{plain}
\bibliography{papers}


\newpage
\appendix
\onecolumn
{\Huge \textbf{Appendix}}
\section{Summary of the rates derived in this paper}
\begin{chapquote}{E.T. Bell, \textit{Men of Mathematics}, 1937}
"A major task of mathematics today is to harmonize the continuous and the discrete, to include them in one comprehensive mathematics, and to eliminate obscurity from both."
\end{chapquote}
\vspace{-5mm}
\bgroup
\def\arraystretch{2.5}
\begin{table*}[!htbp]
\centering
\begin{tabular}{| l | l | l | l |}
\hline
\textbf{\small{Cond.}} & \textbf{\small{Rate (Continuous-time)}} &\textbf{\scriptsize{Thm.}}\\
\hline
\scriptsize{($\boldsymbol{\sim}$),\textbf{(H-)},\textbf{(H$\boldsymbol{\sigma}$)}}& \small{$\displaystyle\frac{f(x_0)-f(x^{\star})}{\varphi(t)}+ \frac{h \ d \ L \ \sigma_*^2}{2 \ \varphi(t)}\int_0^t\frac{\psi(s)^2}{b(s)}ds
\ $} & \ref{prop:CONV_GF_smooth}\\[5pt]
\hline
\scriptsize{($\boldsymbol{\sim}$),\textbf{(H-)},\textbf{(H$\boldsymbol{\sigma}$)},\textbf{(H\begin{tiny}{WQC}\end{tiny})}}& \small{$\displaystyle\frac{\|x_0-x^{\star}\|^2}{2 \ \tau \ \varphi(t) }+ \frac{h \ d \ \sigma^2_*}{2 \ \tau \ \varphi(t)}\int_0^t\frac{\psi(s)^2}{b(s)}ds\ $}  & \ref{prop:CONV_GF_WQC}\\[5pt]
\hline
\scriptsize{\textbf{(H-)},\textbf{(H$\boldsymbol{\sigma}$)},\textbf{(H\begin{tiny}{WQC}\end{tiny})}}  & \small{$\displaystyle \frac{\|x_0-x^{\star}\|^2}{2 \ \tau \ \varphi(t) }+ \frac{h \ d \ \sigma^2_*}{2 \ \tau \ \varphi(t) }\int_0^t(L \ \tau \ \varphi(s)+1)\frac{\psi(s)^2}{b(s)}ds\ $}  & \ref{prop:CONV_GF_WQC}\\[5pt]
\hline
\scriptsize{\textbf{(H-)},\textbf{(H$\boldsymbol{\sigma}$)},\textbf{(H\begin{tiny}{P\L{}}\end{tiny})}}& \small{$\displaystyle e^{-2\mu \varphi(t)}(f(x_0)-f(x^{\star})) + \frac{h \ d \ L \ \sigma^2_* }{2} \int_0^t\frac{\psi(s)^2}{b(s)} e^{-2\mu (\varphi(t)-\varphi(s))}ds\ $}  &\ref{prop:CONV_GF_PL}\\[5pt]
\hline
\scriptsize{\textbf{(H-)},\textbf{(H\begin{tiny}{RSI}\end{tiny})}}  & \small{$\displaystyle \left(\frac{1+2 h L^2\T}{\T(\mu-2hL^2)}\right)^j \|x_0-x^*\|^2\ $ (\underline{with variance reduction})}  & \ref{prop:CONV_VR_RSI}\\[5pt]
\hline
\hline
\hline
\textbf{\small{Cond.}} & \textbf{\small{Rate (Discrete-time, no Gaussian assumption)}} &\textbf{\scriptsize{Thm.}}\\
\hline
\scriptsize{($\boldsymbol{\sim}$),\textbf{(H-)},\textbf{(H$\boldsymbol{\sigma}$)}}& \small{$\displaystyle\frac{2 \ (f(x_0)-f(x^\star))}{(h  \varphi_{k+1})}+\frac{h\  d \ L\ \sigma^2_*}{(h \varphi_{k+1})}\sum_{i=0}^k \frac{\psi_i^2}{b_i} h\ $} & \ref{prop:CONV_GD_smooth}\\[5pt]
\hline
\scriptsize{($\boldsymbol{\sim}$),\textbf{(H-)},\textbf{(H$\boldsymbol{\sigma}$)},\textbf{(H\begin{tiny}{WQC}\end{tiny})}}& \small{$\displaystyle\frac{\|x_0-x^\star\|^2}{\tau \ (h \varphi_{k+1}) }+\frac{d \ h \ \sigma^2_*}{\tau \ (h \varphi_{k+1}) }\sum_{i=0}^k \frac{\psi_i^2}{b_i}h\ $}  & \ref{prop:CONV_GD_WQC}\\[5pt]
\hline
\scriptsize{\textbf{(H-)},\textbf{(H$\boldsymbol{\sigma}$)},\textbf{(H\begin{tiny}{WQC}\end{tiny})}}  & \small{$\displaystyle \frac{\|x_0-x^\star\|^2}{2 \ \tau \ (h\varphi_{k+1})} + \frac{h \ d \ \sigma^2_*}{2 \ \tau \ (h\varphi_{k+1})} \sum_{i=0}^{k} (1+\tau \varphi_{i+1}L) \frac{\psi_i^2}{b_i}h\ $}  & \ref{prop:CONV_GD_WQC}\\[5pt]
\hline
\scriptsize{\textbf{(H-)},\textbf{(H$\boldsymbol{\sigma}$)},\textbf{(H\begin{tiny}{P\L{}}\end{tiny})}}& \small{$\displaystyle \prod_{i=0}^{k}(1-\mu \ h\psi_i)(f(x_0)-f(x^\star))+ \frac{h \ d \ L \ \sigma^2_* }{2}\sum_{i=0}^k \frac{\prod_{\ell=0}^{k}(1-\mu \ h\psi_\ell)}{ \prod_{j=0}^{i}(1-\mu \ h\psi_l)} \frac{\psi_i^2}{b_i} h\ $}  &\ref{prop:CONV_GD_PL}\\[5pt]
\hline
\scriptsize{\textbf{(H-)},\textbf{(H\begin{tiny}{RSI}\end{tiny})}}  & \small{$\displaystyle \left(\frac{1+2L^2h^2 m}{h m (\mu-2L^2 h)}\right)^j \|x_0-x^*\|^2\ $ (\underline{with variance reduction})}  & \ref{prop:CONV_VR_RSI_discrete}\\[5pt]
\hline
\end{tabular}
\caption{Summary of the main convergence results for MB-PGF and VR-PGF compared to SGD with mini-batch or variance reduced gradient estimators. $(\sim)$ indicates a randomized output. For the definition of the quantities in the rates, check App.~\ref{sec:proofs_ct} and App.~\ref{sec:proofs_dt}.}
\label{tb:summary_rates_GD}
\end{table*}
\vspace{-2mm}
\subsection{Correspondences between continuous and discrete-time}
\vspace{-2mm}
We note the following simple correspondences:
\vspace{-2mm}
\begin{enumerate}[leftmargin=*]
    \item $h$ corresponds to $dt$. The rates are \textit{not} simplified to highlight the equivalence.
    \item $h\varphi_{k+1}$ corresponds to $\varphi(t)$. Indeed, $\varphi(t) = \int_0^t \psi(s) ds\simeq\sum_{i=0}^{k} \psi_k h = \varphi_{k+1} h$.
    \item The same argument holds for the exponential and the power, since $e^{a t}\simeq (1+ah)^k$.
    \item The rates for variance reduction match since by definition $\T = m \ h$.
    \item The difference only comes into a few constants which do not depend on the parameters of the problem nor on the algorithm. Those differences are due to higher order terms in the algorithm.
\end{enumerate}
\egroup

\subsection{Algebraic equivalence}
\label{sec:algebra}
In this section we motivate the equivalence outlined in Tb.~\ref{tb:summary_rates_GD} in the deterministic setting, although a similar derivation can easily be performed in the stochastic setting using the diffusion operator instead of the derivative (we introduce this object in App.~\ref{sec:stoc_cal}). We take inspiration from a concept in abstract algebra and we combine it with smoothness --- a common assumption in optimization.

\begin{definition}
Let $A$ be an algebra over a field $F$. A \textit{derivation} is a linear map $D:A\to A$ that satisfies Leibniz's law: $D(ab) = aD(b)+D(a)b$.
\end{definition}

Consider the vector space of $d$-dimensional sequences over $\N$ equipped with pointwise and elementwise product and sum, which we denote as $\R^{d\times\infty}$; this is trivially an algebra. Next, let us define the sequence $D_h(x)$ (still in the algebra) pointwise: for all $k\in\N$ $$\left[D_h(x)\right]_k =: D_h(x,k) = \frac{x_{k+1}-x_k}{h}.$$
 
 Notice that GD can be written as $D_h(x,k) = -\nabla f(x_k)$, which resembles the gradient flow equation $\frac{d}{dt} X(t) = -\nabla f(X(t))$. The \textit{crucial question} is whether the continuous time derivative $\frac{d}{dt}$ and the operator $D_h$ have the same properties. This would motivate an algebraic equivalence between continuous and discrete time in optimization.

To start, we show that $D_h$ is \textit{almost} a derivation. We denote by $x^+$ the one-step-ahead $x$ sequence: $x^+_k = x_{k+1}$ for all $k\in\N$.

\begin{enumerate}[leftmargin=*]
    \item Let $x,y\in \R^{d\times\infty}$ and $k\in\N$, $D_h(x+y,k) = D_h(x,k)+D_h(y,k).$
    \item Let $x\in \R^{d\times\infty}$ , $a\in\R$ and $k\in\N$, $D_h(ax,k) = a D_h(x,k)$.
    \item Let $x,y\in \R^{d\times\infty}$; for all $k\in\N$, 
    \begin{multline*}
        D_h(xy,k) = \frac{1}{h}(y_{k+1}x_{k+1}-y_k x_k) \\ =\frac{1}{h}((y_{k+1}-y_k)x_{k+1} + y_k x_{k+1}-y_k x_k) = \frac{y_{k+1}-y_k}{h} x_{k+1} + y_k \frac{x_{k+1}-x_k}{h}.
    \end{multline*}
    Therefore $D(xy) = x^+ D_h(y) + D_h(x) y $.
\end{enumerate}

Since we will only care about the value of $D_h(x)$ at iteration $k$, we are going to deal with the pointwise map $D_h(x,k)$ and deviate from the algebraic definition.

For a complete correspondence to continuous time, we still need a chain rule. For this, we need a bit more flexibility in the definition of $D_h$: let $g:\R^d\to\R$ be $L$-smooth, we define
$$D_h(g\circ x,k):=\frac{g(x_{k+1})-g(x_{k})}{h}.$$
Smoothness gives us a chain rule: we have 
$$g(x_{k+1})\le g(x_{k})+ \langle\nabla g(x_k), x_{k+1}-x_k \rangle + \frac{L}{2}\|x_{k+1}-x_k\|^2;$$
hence
$$D_h(g\circ x,k) \le \langle\nabla g(x_k), \frac{x_{k+1}-x_k}{h} \rangle + \frac{L}{2h}\|x_{k+1}-x_k\|^2 = \langle\nabla g(x_k), D_h(x,k) \rangle + \frac{L h}{2}\|D_h(x,k)\|^2.$$

We condense our findings in the box below
\begin{tcolorbox}
Let $\{x_k\}_{k\in\N}$ and $\{y_k\}_{k\in\N}$ be sequences of $\R^d$ vectors and let $g:\R^d\to\R$ be $L$-smooth. 
\begin{itemize}[leftmargin=*]
    \item Linearity : $ D_h(x+y,k) = D_h(x,k)+D_h(y,k)$, $a\in\R$ and $D_h(ax,k) = aD_h(x,k)$.
    \item Product rule: $D_h(xy,k) = D_h(x,k) y_{k} + x_{k+1} D_h(y,k)$.
    \item Chain rule: $D_h(g(x),k) \le \langle\nabla g(x_k), D_h(x,k) \rangle + \frac{L h}{2}\|D_h(x,k)\|^2$.
\end{itemize}
\end{tcolorbox}

This shows that the operations in continuous time and in discrete time are algebraically very similar, motivating the success behind the matching rates summarized in Tb.~\ref{tb:summary_rates_GD}. Indeed, taking $h\to 0$ we recover the normal derivation rules from calculus.

\section{Stochastic Calculus}
\label{sec:stoc_cal}
In this appendix we summarize some important results in the analysis of Stochastic Differential equations~\cite{mao2007stochastic,oksendal1990stochastic}. The notation and the results in this section will be used extensively in all proofs in this paper. We assume the reader to have some familiarity with Brownian motion and with the definition of stochastic integral (Ch.~1.4 and 1.5 in~\cite{mao2007stochastic}).

\subsection{It\^{o}'s lemma and Dynkin's formula}
\label{subsec:ito}

We start with some notation: let $\left(\Omega, \mathcal{F}, \{\mathcal{F}(t)\}_{t\ge0}, \mathbb{P}\right)$ be a filtered probability space. We say that an event $E\in\mathcal{F}$ holds almost surely (a.s.) in this space if $\mathbb{P}(E)=1$. We call $\mathcal{L}^p([a,b],\R^d)$, with $p>0$, the family of $\R^d$-valued $\mathcal{F}(t)$-adapted processes $\{f(t)\}_{a\le t\le b}$ such that $$\int_a^b\|f(t)\|^p dt\le\infty.$$ Moreover, we denote by   $\mathcal{M}^p([a,b],\R^d)$, with $p>0$, the family of $\R^d$-valued processes $\{f(t)\}_{a\le t\le b}$ in $\mathcal{L}([a,b],\R^d)$ such that $\E\left[\int_a^b\|f(t)\|^p dt\right]\le\infty$. 
We will write $h \in \mathcal{L}^p\left(\R_{+},\R^d\right)$, with $p>0$, if $h \in \mathcal{L}^p\left([0,T],\R^d\right)$ for every $T>0$. Same definitions holds for matrix
valued functions using the Frobenius norm $\|A\| := \sqrt{\sum_{ij}|A_{ij}|^2}$.

Let $B=\{B(t)\}_{t\ge0}$ be a one dimensional Brownian motion defined on our probability space and let $X=\{X(t)\}_{t\ge0}$ be an $\mathcal{F}(t)$-adapted process taking values on $\R^d$.

\begin{definition}
Let $b \in \mathcal{L}^1\left(\R_{+},\R^d\right)$ (the drift) and $\sigma \in \mathcal{L}^2\left(\R_{+},\R^{d\times m}\right)$ (the volatility). $X$ is an It\^{o} process if it takes the form
\vspace{-3mm}
$$X(t) = x_0 + \int_0^t f(s)ds + \int_0^t \sigma(s) dB(s).$$
We shall say that $X$ has the stochastic differential
\begin{equation}
    dX(t) = f(t) dt + \sigma(t) dB(t).
    \label{eq:stoc_diff}
\end{equation}
\label{def:SDE_ito_process}
\end{definition}
\vspace{-5mm}
In this paper we indicate as $\partial_{x} f(x,t)$ the $d$-dimensional vector of partial derivatives of a scalar function $f:\R^d\times [0,\infty)\to \R$ w.r.t. each component of $x$. Moreover, we call $\partial_{xx} f(x,t)$ the $d\times d$-matrix of partial derivatives of each component of $\partial_{x} f(x,t)$ w.r.t each component of $x$. We now state the celebrated \textbf{It\^{o}'s lemma}.
\vspace{2mm}
\begin{frm-thm-app}[It\^{o}'s lemma]
Let $X$ be an It\^{o} process with stochastic differential $dX(t) = f(t) dt + \sigma(t) dB(t)$. Let $\EE\left(x,t\right)$ be twice continuously differentiable in $x$ and continuously differentiable in $t$, taking values in $\R$. Then $\EE(X(t),t)$ is again an It\^{o} process with stochastic differential
\begin{multline}
        d\EE(X(t),t) = \partial_t \EE (X(t), t)) dt + \langle\partial_{x} \EE(X(t),t), f(t)\rangle dt\\+  \frac{1}{2}\tr\left(\sigma(t)\sigma(t)^T\partial_{xx}\EE(X(t),t) \  \right)dt + \langle\partial_{x}\EE(x(t),t),\sigma(t)\rangle dB(t),
\end{multline}
which we sometimes write as
$$d\EE = \partial_t \EE dt + \langle\partial_{x} \EE, dX\rangle + \frac{1}{2}\tr\left( \sigma\sigma^T\partial_{xx}\EE \right)dt$$

\label{lemma:SDE_ito}
\end{frm-thm-app}

Following~\cite{mao2007stochastic}, we introduce the \textbf{It\^{o} diffusion differential operator} $\mathscr{A}$:
\begin{equation}
\mathscr{A}(\cdot)= \partial_t(\cdot) + \langle\partial_{x} (\cdot), b(t)\rangle + \frac{1}{2}\tr\left(\sigma(t)\sigma(t)^T\partial_{xx}(\cdot)\right).
    \label{eqn:SDE_diffusion_op}
\end{equation}

It is then clear that, thanks to It\^{o}'s lemma,
$$d\EE(X(t),t) = \mathscr{A}\EE(X(t),t) dt + \langle\EE_X(X(t),t),\sigma(t)dB(t)\rangle .$$

Moreover, by the definition of an It\^{o} process, we know that at any time $t>0$,
$$\EE(X(t),t) = \EE(x_0,0) + \int_0^t \mathscr{A}\EE(X(s),s) ds + \int_0^t \langle\partial_{x}\EE(X(s),s),\sigma(s)dB(s)\rangle . \quad \quad a.s.$$
Taking the expectation the stochastic integral vanishes \footnote{Because $\langle\partial_{x}\EE(X(t),t),\sigma(t)\rangle\in \mathcal{M}^2([0,T],\R)$, see e.g. Thm.~1.5.8~\cite{mao2007stochastic}} and we have
\begin{equation}
    \E[\EE(X(t),t)]-\EE(x_0,0) = \E \left[\int_0^t \mathscr{A}\EE(X(t),t) dt\right].
    \label{eq:dynkin}
\end{equation}
This result can be generalized for stopping times and is known as \textbf{Dynkin's formula}.

\subsection{Stochastic differential equations}
\label{subsec:SDE}
Stochastic Differential Equations (SDEs) are equations of the form
\begin{equation*}
    dX = b(X,t)dt + \sigma(X,t) dB(t).
\end{equation*}

Notice that this equation is different from Eq.~\eqref{eq:stoc_diff}, since $X$ also appears on the RHS. Hence, we need to define what it means for a stochastic process $X=\{X(t)\}_{t\ge0}$ with values in $\R^d$ to solve an SDE.

\begin{definition}
Let $X$ be as above with deterministic initial condition $X(0) = x_0$. Assume $b:\R^d\times[0,T]\to\R^d$ and $\sigma:\R^d\times[0,T]\to\R^{d\times m}$ are Borel measurable; $X$ is called a solution to the corresponding SDE if
\begin{enumerate}[leftmargin=*]
    \item $X$ is continuous and $\mathcal{F}(t)$-adapted;
    \item $b \in \mathcal{L}^1\left([0,T],\R^d\right)$;
    \item $\sigma \in \mathcal{L}^2\left([0,T],\R^{d\times m}\right)$;
    \item For every $t \in [0,T]$
    $$X(t) = x_0 + \int_0^t b(X(s),s)ds + \int_0^t\sigma(X(s),s) dB(s) \ \ \ a.s.$$
\end{enumerate}
Moreover, the solution $X(t)$ is said to be unique if any other solution $X^\star(t)$ is such that
$$\mathbb{P}\left\{X(t) = X^\star(t), \text{ for all } 0\le t\le T\right\}=1.$$
\label{def:SDE_sol}
\end{definition}
\vspace{-5mm}
Notice that the solution to a SDE is an It\^{o} process; hence we can use It\^{o}'s Formula (Thm.~\ref{lemma:SDE_ito}). The following theorem gives a sufficient condition on $b$ and $\sigma$ for the existence of a solution to the corresponding SDE.

\begin{frm-thm-app}
Assume that there exist two positive constants $\bar K$ and $K$ such that
\begin{enumerate}
    \item (Global Lipschitz condition) for all $x,y\in \R^d$ and $t\in [0,T]$
    $$\max \{\|b(x,t) - b(y,t)\|,  \ \|\sigma(x,t)-\sigma(y,t)\|\} \le \bar K \|x-y\|^2;$$
    \item (Linear growth condition) for all $x\in \R^d$ and $t\in [0,T]$
    $$\max\{\|b(x,t)\|, \ \|\sigma(x,t)\|\}\le K(1+ \|x\|).$$
\end{enumerate}
Then, there exists a unique solution $X$ to the corresponding SDE , and $X\in \mathcal{M}^2([0,T],\R^d).$
\label{thm:SDE_existence_uniqueness_global}
\end{frm-thm-app}

\paragraph{Numerical approximation.} Often, SDEs are solved numerically. The simplest algorithm to provide a sample path $(\hat x_k)_{k\ge 0}$ for $X$, so that $X(k\Delta t) \approxeq x_k$ for some small $\Delta t$ and for all $k\Delta t\le M$, is called Euler-Maruyama (Algorithm~\ref{algo:EulerMaruryama_SDE}). For more details on this integration method and its approximation properties, the reader can check~\cite{mao2007stochastic}.
\begin{algorithm}
\caption{Euler-Maruyama integration method for a SDE}
    \label{algo:EulerMaruryama_SDE}
\begin{algorithmic}
    \INPUT{The drift $b$ and the volatility $\sigma$; the initial condition $x_0$}
    \STATE fix a stepsize $\Delta t$;
    \STATE initialize $\hat x_0 = x_0$;
    \STATE $k=0$;
    \WHILE{$k \le \left\lfloor\frac{T}{\Delta t}\right\rfloor$}
        \STATE sample some $d$-dimensional Gaussian noise $Z_k\sim\mathcal{N}(0,\I_d)$;
        \STATE compute $\hat x_{k+1} = \hat x_k + \Delta t \ b(\hat x_k,k \Delta t)+ \sqrt{\Delta t} \ \sigma(\hat x_k,k \Delta t) Z_k;$
        \STATE $k=k+1$;
    \ENDWHILE
    \OUTPUT the approximated sample path $(\hat x_k)_{0\le k\le\left\lfloor\frac{T}{\Delta t}\right\rfloor }$
\end{algorithmic}
\end{algorithm}

\subsection{Functional SDEs}
\label{subsec:SDDE}

SDEs describe Markovian (also know as memoryless) processes: a Markovian process is a system where the current state completely determines the future evolution. Indeed, in an SDE, the RHS only depends on $X(t)$ and on $t$. To model variance-reduction methods such as SVRG~\cite{johnson2013accelerating}, we will need a continuous time model which also retains some information about the past. This was also noted in~\cite{he2018differential}.

First, we introduce Functional Stochastic Differential Equations (FSDEs) which are equations of the form
$$dX = b(X_{(0,t]},t)dt + \sigma(X_{(0,t]},t) dB(t),$$
where $X_{(0,t]}$ is the past history of $X$ up to time $t$. Here we focus on a particular type of FSDE, namely Stochastic Differential Delay Equations (SDDEs):

\begin{equation*}
 dX(t) = b(X(t),X(t-\xi(t)),t)dt + \sigma(X(t), X(t-\xi(t)),t) dB(t), 
\end{equation*}

where $\xi(t)\in[0,\tau]$ is the delay at time $t$. As we did in the last subsection for SDEs, we need to define what it means for a stochastic process $X=\{X(t)\}_{t\ge-\tau}$ with values in $\R^d$ to solve an SDDE

\begin{definition}
Let $X$ be as above with deterministic initial condition $X(s) = x_0 \;$ for $-\tau\le s\le 0$. Assume $b:\R^d\times\R^d\times[0,T]\to\R^d$, $\xi:\R_+\to[0,\tau]$ and $\sigma:\R^d\times\R^d\times[0,T]\to\R^{d\times m}$ are Borel measurable; $X$ is called a solution to the corresponding SDDE if

\begin{enumerate}[leftmargin=*]
    \item $X$ is continuous and $\mathcal{F}(t)$-adapted;
    \item $b \in \mathcal{L}^1\left([0,T],\R^d\right)$;
    \item $\sigma\in\mathcal{L}^2\left([0,T],\R^{d\times m}\right)$;
    \item For every $t \in [0,T]$
    $$X(t) = x_0 + \int_0^t b(X(s),X(s-\xi(s)),s)ds + \int_0^t\sigma(X(s), X(s-\xi(s)),s) dB(s) \ \ \ a.s.$$
\end{enumerate}
Moreover, a solution $X(t)$ is said to be unique if any other solution $X^\star(t)$ is such that
$$\mathbb{P}\left\{X(t) = X^\star(t), \text{ for all } -\tau\le t\le T\right\}=1.$$
\label{def:SDDE_sol}
\end{definition}
We state now one existence and uniqueness theorem for SDDEs, which is adapted from equations 5.2 and 5.3 in~\cite{mao2007stochastic}.
\newpage

\begin{frm-thm-app}
Assume that there exist two positive constants $\bar K$ and $K$ such that for all $x,\bar x, y, \bar y \in \R^d$ and for all $t\in [0,T]$
\begin{enumerate}
    \item (Lipschitz condition) 
    $$\max \{\|b(x,y,t) - b(\bar x,\bar y,t)\|,  \ \|\sigma(x,y,t)-\sigma(\bar x,\bar y,t)\|\} \le \bar K (\|x-\bar x\|+ \|y-\bar y\|);$$
    \item (Linear growth condition)
    $$\max\{\|b(x,y,t)\|, \ \|\sigma(x,y,t)\|\}\le K(1+ \|x\|+\|y\|).$$
\end{enumerate}
Then there exists a unique solution $X$ to the corresponding SDDE and $X\in\mathcal{M}^2([-\tau, T],\R^d).$
\label{thm:SDE_existence_uniqueness_SDDE}
\end{frm-thm-app}

\paragraph{Numerical approximation.} Often, SDSEs are solved numerically. Algorithm~\ref{algo:EulerMaruryama_SDE} can easily be modified to work with SDDEs (see (Algorithm~\ref{algo:EulerMaruryama_SDDE})). For more details on approximation error SDDEs, we refer the reader to Chapter 5 in~\cite{mao2007stochastic}.
\begin{algorithm}
\caption{Euler-Maruyama integration method for a SDDE}
    \label{algo:EulerMaruryama_SDDE}
\begin{algorithmic}
    \INPUT{The drift $b$ and the volatility $\sigma$; the initial condition $x_0$}
    \STATE fix a stepsize $\Delta t$
    \STATE compute $q= \left\lfloor\frac{\tau}{\Delta t}\right\rfloor$;
    \STATE initialize $\hat x_k = x_0$ for $-q \le k \le 0$;
    \STATE $k=0$;
    \WHILE{$k \le \left\lfloor\frac{T}{\Delta t}\right\rfloor$}
        \STATE sample some $d$-dimensional Gaussian noise $Z_k\sim\mathcal{N}(0,\I_d)$;
        \STATE compute $\hat x_{k+1} = \hat x_k + \Delta t \ b(\hat x_k,\hat x_{k-q}, k \Delta t)+ \sqrt{\Delta t} \ \sigma(\hat x_k,\hat x_{k-q}, k \Delta t) Z_k;$
        \STATE $k=k+1$;
    \ENDWHILE
    \OUTPUT the approximated sample path $(\hat x_k)_{-q\le k\le\left\lfloor\frac{T}{\Delta t}\right\rfloor }$
\end{algorithmic}
\end{algorithm}

\subsection{Time change in stochastic analysis}

We conclude this appendix with a useful formula from~\cite{oksendal2003stochastic}, which is the equivalent to a chain rule for stochastic processes. We use this formula in Sec.~\ref{sec:time_stretch}.

\begin{frm-thm-app}[Time change formula for It\^{o} integrals]
Let $c:\R_+\to\R_+$ be a strictly positive continuous function and $\beta(t) = \int_0^t c(s)ds$. Denote by $\alpha(\cdot)$ the inverse of $\beta(\cdot)$ and suppose it is continuous. Let $\{B(t)\}_{t\ge0}$ be an $m$-dimensional Brownian Motion and let the stochastic process $\{v(s)\}_{s\ge 0}$ with $v(s)\in\R^{n\times m}$ be Borel measurable in time, adapted to the natural filtration of $B$ and $\mathcal{M}^2(\R_+,\R^d)$. Define 
$$\tilde B(t) = \int_0^t \sqrt{c(s)}dB(s).$$
Then $\{\tilde B(t)\}_{t\ge0}$ is a Brownian Motion and we have
$$\int_0^{\alpha(t)}v(s)dB(s) = \int_0^t \sqrt{\alpha'(s)}v(\alpha(s)) d\tilde B(s), \ \ \ a.s.$$
\label{thm:oxsendall}
\end{frm-thm-app}


\section{Existence and Uniqueness of the solution of MB-PGF and VR-PGF}
\label{sec:ex_uni}

Let $A$ be a positive semidefinite matrix; by the spectral theorem, $A$ can be diagonalized as $A = V D V^T$, with $V$ an orthogonal matrix and $D$ a diagonal matrix with non-negative diagonal elements (the eigenvalues of $A$). We can define the principal square root $A^{1/2} := V D^{1/2} V^T$, where $D^{1/2}$ is the elementwise square root of $D$. It is clear that $A^{1/2}$ is also positive semidefinite and $A = A^{1/2} A^{1/2}.$ 

In this paper we analyze MB-PGF and VR-PGF, which we report below (see discussion and derivation in Sec.~\ref{sec:models}).
\begin{equation*}
\tag{MB-PGF}
dX(t) = -\psi(t)\nabla f(X(t)) \ dt + \psi(t)\sqrt{h/b(t)} \ \sigma_{\text{MB}}(X(t)) \ dB(t) \ \ \ \ \ \ \ \ \ \ \ \ \ \ \ \ \ \ \ \ 
\end{equation*}
\begin{equation*}
\tag{VR-PGF}
dX(t) = -\psi(t)\nabla f(X(t)) \ dt + \psi(t)\sqrt{h/b(t)} \ \sigma_{\text{VR}}(X(t),X(t-\xi(t))) \ dB(t)
\end{equation*}

The volatility of MB-SDE is defined as
$$\sigma_{\text{MB}}(x) = \left(\frac{1}{N}\sum_{i=1}^N \left(\nabla f(x) -\nabla f_{i}(x)\right)\left(\nabla f(x) -\nabla f_{i}(x)\right)^T\right)^{1/2},$$
and a similar formula holds for the $\sigma_{\text{VR}}(\cdot)$. From Thm.~\ref{thm:SDE_existence_uniqueness_global} and Thm.~\ref{thm:SDE_existence_uniqueness_SDDE}, we know that existence and uniqueness of the solution to the equations above requires this matrix valued function of $x$ to be Lipschitz continuous. Previous literature~\cite{raginsky2017non,raginsky2012continuous,mertikopoulos2018convergence,krichene2017acceleration}, put this condition as a requirement at the beginning of their analysis. However, \textit{since in our case we want to draw a direct connection to the algorithm}, we shall prove that Lipschitzianity is indeed verified.

To start, we remind again to the reader that in this paper we indicate as $\mathcal{C}^r_b(\R^d,\R^m)$ the family of $r$ times continuously differentiable functions from $\R^d$ to $\R^m$, with bounded  $r$-th derivative. If $b$ is omitted, it means we just require $f$ to be $r$ times continuously differentiable.

A crucial lemma which can be found as Prop.~6.2 in~\cite{ikeda2014stochastic} or as Thm.~5.2.3 in~\cite{stroock2007multidimensional}.

\begin{lemma}
Let $\Sigma:\R^n\to\R^{n\times n}$ be a $n\times n$ real positive semidefinite matrix function of an input vector $x\in\R^n$.
Assume each component $\Sigma_{ij}: \R^n \to \R$ be in $\mathcal{C}^2_b(\R^n,\R)$. Then, $\Sigma(x)^{1/2}$ is globally Lipschitz w.r.t. the Frobenius norm, meaning that there exists a constant $K$ such that for every $q,p \in \R^n$
    $$\left\|\Sigma(q)^{1/2}-\Sigma(p)^{1/2}\right\|\le K\|q-p\|.$$
    \label{lemma:STOC_sigma}
\end{lemma}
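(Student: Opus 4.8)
The plan is to deduce the matrix inequality from an elementary scalar estimate of "gradient-squared controlled by the function'' type, which is exactly what compensates for the failure of $t\mapsto\sqrt t$ to be Lipschitz near $0$, and then to push this bound through a regularisation together with the Sylvester equation for the derivative of the matrix square root. First I would record the scalar fact: if $g:\R^n\to\R_{\ge 0}$ is $\mathcal{C}^2$ with Hessian bounded in spectral norm by $M$, then $\|\nabla g(x)\|^2\le 2Mg(x)$ for every $x$. Indeed, fixing a unit vector $u$, the map $\phi(t):=g(x+tu)$ is non-negative and Taylor's theorem gives $\phi(t)\le\phi(0)+\phi'(0)t+\tfrac{M}{2}t^2$ for all $t\in\R$; a non-negative quadratic has non-positive discriminant, so $\langle\nabla g(x),u\rangle^2\le 2Mg(x)$, and one takes the supremum over $u$. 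Applying this to $g_v(x):=\langle v,\Sigma(x)v\rangle$ — non-negative since $\Sigma(x)\succeq 0$, and $\mathcal{C}^2$ with Hessian bounded by a constant $M$ depending only on $n$ and $\sup_x\max_{i,j,k,\ell}|\partial^2_{k\ell}\Sigma_{ij}(x)|$, which is finite because $\Sigma_{ij}\in\mathcal{C}^2_b$ — yields the key bound
$$\langle v,\partial_k\Sigma(x)v\rangle^2\le 2M\,\langle v,\Sigma(x)v\rangle\qquad\text{for every unit }v\text{ and every }k.$$

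Next I would regularise: set $\Sigma_\varepsilon(x):=\Sigma(x)+\varepsilon\I\succ 0$. On the open cone of positive definite matrices $A\mapsto A^{1/2}$ is $\mathcal{C}^\infty$, so $x\mapsto\Sigma_\varepsilon(x)^{1/2}$ is $\mathcal{C}^1$; along the segment $u(t)=p+t(q-p)$ write $\Sigma_\varepsilon(q)^{1/2}-\Sigma_\varepsilon(p)^{1/2}=\int_0^1 R(t)\,dt$, where $R(t):=\tfrac{d}{dt}\Sigma_\varepsilon(u(t))^{1/2}$. Differentiating $(\Sigma_\varepsilon(u(t))^{1/2})^2=\Sigma_\varepsilon(u(t))$ shows that $R(t)$ is the unique solution of the Sylvester equation $AR+RA=H$ with $A:=\Sigma_\varepsilon(u(t))^{1/2}\succ 0$ and $H:=\sum_k(q-p)_k\,\partial_k\Sigma(u(t))$; equivalently $R=\int_0^\infty e^{-sA}He^{-sA}\,ds$. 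Diagonalising $A=U\operatorname{diag}(\lambda_1,\dots,\lambda_n)U^T$ (so $\mu_i:=\lambda_i^2$ are the eigenvalues of $\Sigma_\varepsilon(u(t))$) gives $(U^TRU)_{ij}=(U^THU)_{ij}/(\lambda_i+\lambda_j)$, hence $\|R\|_F^2=\sum_{i,j}|(U^THU)_{ij}|^2/(\lambda_i+\lambda_j)^2$.

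The crux is then the term-by-term estimate, uniform in $\varepsilon$ and $t$. Using $(\lambda_i+\lambda_j)^2\ge\mu_i+\mu_j$, applying the scalar bound to the eigenvectors $u_i$ of $A$ for the diagonal entries and to the polarisation vectors $(u_i\pm u_j)/\sqrt2$ — together with the symmetry of $\partial_k\Sigma$ — for the off-diagonal ones, and the trivial $\langle v,\Sigma v\rangle\le\langle v,\Sigma_\varepsilon v\rangle$, one obtains $|(U^T\partial_k\Sigma\,U)_{ij}|^2\le 2M(\mu_i+\mu_j)$, with the sharper denominator $(\lambda_i+\lambda_i)^2=4\mu_i$ absorbing the diagonal contributions. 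Summing the $n^2$ terms yields $\big\|\int_0^\infty e^{-sA}\partial_k\Sigma(u(t))e^{-sA}\,ds\big\|_F\le n\sqrt M$, hence $\|R(t)\|_F\le n^{3/2}\sqrt M\,\|q-p\|$ and therefore $\|\Sigma_\varepsilon(q)^{1/2}-\Sigma_\varepsilon(p)^{1/2}\|_F\le n^{3/2}\sqrt M\,\|q-p\|$. Finally, continuity of $A\mapsto A^{1/2}$ on positive semidefinite matrices lets $\varepsilon\downarrow 0$, giving the lemma with $K=n^{3/2}\sqrt M$.

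The main obstacle is precisely this last estimate: the Sylvester solution must be controlled even when $\Sigma(x)$ is (nearly) singular, which is exactly the regime where $\sqrt{\cdot}$ fails to be Lipschitz at $0$. What makes the vanishing eigenvalues harmless is the combination of the spectral decomposition of $A$, the polarisation trick for off-diagonal entries, and above all the inequality $\|\nabla g\|^2\le 2Mg$, which is where the $\mathcal{C}^2$ hypothesis on $\Sigma$ does the real work; the $\varepsilon$-regularisation is only a technical device ensuring every expression stays well-defined while the uniform bound is established.
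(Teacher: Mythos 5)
Your proof is correct. Note that the paper does not actually prove this lemma: it is imported verbatim as Prop.~6.2 of Ikeda--Watanabe (equivalently Thm.~5.2.3 of Stroock--Varadhan), so your contribution is a self-contained reconstruction of that classical argument rather than an alternative to a proof in the text. The route you take is essentially the canonical one: the scalar inequality $\|\nabla g\|^2\le 2Mg$ for non-negative $\mathcal{C}^2$ functions with bounded Hessian is exactly the mechanism by which those references convert $\mathcal{C}^2_b$ regularity of $\Sigma$ into Lipschitz continuity of $\Sigma^{1/2}$, and your handling of the degenerate eigenvalues --- regularising by $\varepsilon I$, solving the Sylvester equation $AR+RA=H$ via $R=\int_0^\infty e^{-sA}He^{-sA}\,ds$, diagonalising, and cancelling $\mu_i+\mu_j$ against $(\lambda_i+\lambda_j)^2$ after the polarisation step --- is sound; I checked the diagonal and off-diagonal estimates and the constant $K=n^{3/2}\sqrt{M}$ and they go through (one could shave a factor of $\sqrt{n}$ by applying the scalar bound directly to the directional derivative $\langle \nabla g_v, q-p\rangle$ rather than coordinate-by-coordinate, but this is cosmetic). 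The only point worth making explicit if this were written out in full is that positive semidefiniteness of $\Sigma(x)$ is being used in the standard real convention that includes symmetry, which is what licenses both the polarisation identity for $\partial_k\Sigma$ and the orthogonal diagonalisation of $A$.
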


We proceed with the proofs of existence and uniqueness, which require the following assumption:

\textbf{(H)} Each $f_i$ is in  $\mathcal{C}^3_b(\R^d,\R)$ and is $L$-smooth.

\begin{frm-thm-app}[Existence and Uniqueness for MB-PGF] Assume \textbf{(H)}.
For all initial conditions $X(0) = x_0\in\R^d$, MB-PGF has a unique solution (in the sense of Defs.~\ref{def:SDE_sol} in App.~\ref{sec:stoc_cal}) on $[0,T]$, for any $T< \infty$. Let the stochastic process $X = \{X(t)\}_{0\le t\le T}$ be such solution; almost all (i.e. with probability $1$) realizations of $X$ are continuous functions and
$\E\left[\int_0^T \|X(t)\|^2 dt\right]< \infty.$
\end{frm-thm-app}

\begin{proof}
We basically need to check the conditions of Thm.~\ref{thm:SDE_existence_uniqueness_global}.
First, we notice that $\nabla f$ and $\sigma_{\text{MB}}$ are both Borel measurable because they are continuous.

\underline{Drift :}\ We now verify the Lipschitz condition for the drift term. For every $t\le0$ we trivially have that, since $\psi(t)\le 1$ and $f$ is $L$-smooth,
$$\|\psi(t)\nabla f(x)-\psi(t)\nabla f(y)\|\le\|\nabla f(x)-\nabla f(y)\|\le L\|x-y\|.$$
Next, we verify the linear growth condition. For every $t\ge0$, using the reverse triangle inequality and the fact that $\psi(t)\in (0,1]$ and $\psi(0) = 1$,
\begin{equation*}
        L\|x\|\ge\|\psi(t)\nabla f(x)-\psi(t)\nabla f(0_d)\|
        \ge \left(\left| \|\psi(t)\nabla f(x)\|- \|\nabla f(0_d)\|\right|\right).
\end{equation*}
Thus, we have linear growth with constant $K :=\max\left\{\|\nabla f(0)\|,L\right\}$: for every $t\in [0,T]$ and $x\in \R^d$,
$$\|\psi(t)\nabla f(x)\|\le K(1+\|x\|).$$

\underline{Volatility :} \ We need to verify the same conditions for the volatility matrix $\sigma_{\text{MB}}$. Let us define $g_i(x) := \nabla f(x) -\nabla f_{i}(x)$.
Using the definition of Frobenius norm, the linearity of $\E$, the cyclicity of the trace functional, and the fact that $\psi(t)\in (0,1]$ for all $t\ge 0$, we get

\begin{multline*}
    \|\psi(t)\sqrt{h/b(t)} \ \sigma_{\text{MB}}(x)\|^2 = \psi(t)^2\frac{h}{b(t)} \tr\left(\E \left[g_i(x)g_i(x)^T\right]\right)
    \\=\psi(t)^2\frac{h}{b(t)} \E \left[\tr (g_i(x)^T g_i(x))\right] 
    =\psi(t)^2\frac{h}{b(t)} \E \|g_i(x)\|^2.
\end{multline*}

Since $g_i(x)$ is $L$-Lipschitz, by the same argument used above for the drift term, we have $\|g_i(x)\|^2\le C(1+\|x\|^2)$ for some $C>0$ and all $i\in [N]$. Plugging this in, since $b(t)\ge 1$
\begin{equation*}
    \begin{split}
    \|\psi(t)\sqrt{h/b(t)} \ \sigma_{\text{MB}}(x)\|^2 = 
    \le D(1+\|x\|^2),
    \end{split}
\end{equation*}
for some finite positive $D$. To conclude the proof of linear growth, we notice that for any $p\in \R$, $\sqrt{1+p^2}\le 2(1+|p|)$. Thus for $B := 2D$, we have
$$\|\psi(t)\sqrt{h/b(t)} \ \sigma_{\text{MB}}(x)\|\le B(1+\|x\|).$$

Last, the global Lipschitzianity of $\sigma_{\text{MB}}$ follows directly from Lemma~\ref{lemma:STOC_sigma} using the fact that $f$ is $\mathcal{C}^3_b(\R^d,\R)$ and each $f_i$ is $\mathcal{C}^3_b(\R^d,\R)$, because then the gradients are of class $\mathcal{C}^2$ and $\sigma_{\text{MB}}$ is a smooth function of these gradients.
\
\end{proof}

\begin{frm-thm-app}[Existence and Uniqueness for VR-PGF]
Assume \textbf{(H)}. For any initial condition $x_0$, such that $X(s) = x_0$ for all $t\in[-\tau,0]$, VR-PGF has a unique solution (in the sense of Def.~\ref{def:SDDE_sol} in App.~\ref{sec:stoc_cal}) on $[-\tau,T]$, for any $T< \infty$. Moreover, let $X = \{X(t)\}_{0\le t\le T}$ be such solution; almost all realizations of $X$ are continuous functions and
$\E\left[\int_0^T \|X(t)\|^2 dt\right]< \infty.$
\end{frm-thm-app}
\begin{proof}
This time we need to check the conditions of Thm.~\ref{thm:SDE_existence_uniqueness_SDDE}. The requirements on the drift term are satisfied, as already shown in the proof for MB-PGF, since there is no delay in the drift. 
To verify the conditions on $\sigma_{\text{VR}}:\R^d\times \R^d\to \R^{d\times d}$ we proceed again as in the proof for MB-PGF, using Lemma~\ref{lemma:STOC_sigma} but this time on the joint vector $(x,\tilde x)\in \R^d\times \R^d$ ($n$ in the lemma is  $2d$), using the norm subadditivity.
\end{proof}


\section{Convergence proofs in continuous-time}
\label{sec:proofs_ct}

Fon convenience of the reader, we report here again the equations we are about to analyze
continuous-time models, which we analyse in this paper, are
\begin{tcolorbox}
\begin{equation*}
\tag{MB-PGF}
dX(t) = -\psi(t)\nabla f(X(t)) \ dt + \psi(t)\sqrt{h/b(t)} \ \sigma_{\text{MB}}(X(t)) \ dB(t) \ \ \ \ \ \ \ \ \ \ \ \ \ \ \ \ \ \ \ \ 
\end{equation*}
\begin{equation*}
\tag{VR-PGF}
dX(t) = -\psi(t)\nabla f(X(t)) \ dt + \psi(t)\sqrt{h/b(t)} \ \sigma_{\text{VR}}(X(t),X(t-\xi(t))) \ dB(t)
\end{equation*}
\vspace{-4mm}
\end{tcolorbox}
where 
\begin{itemize}[leftmargin=*]
\item  $\xi:\R_+\to[0,\T]$, the \textit{staleness function}, is s.t. $\xi(h k)=\xi_k$ for all $k\ge 0$;
\item  $\psi(\cdot)\in\mathcal{C}^1(\R_+,[0,1])$, the \textit{adjustment function}, is s.t. $\psi(h k)=\psi_k$ for all $k\ge 0$ and $\frac{d\psi(t)}{dt} \le 0$;
\item $b(\cdot) \in\mathcal{C}^1(\R_+,\R_+)$, the \textit{mini-batch size function} is s.t. $b(hk)=b_k$ for all $k\ge0$ and $b(t)\ge1$;
\item $\{B(t)\}_{t\ge 0}$ is a $d-$dimensional Brownian Motion on some filtered probability space.
\end{itemize}

For existence and uniqueness we need to assume the following:

\textbf{(H)} \ \ \ \ \ \ \ \ Each $f_i(\cdot)$ is in $\mathcal{C}^3$ with bounded third derivative and $L$-smooth.

We also recall some of assumptions introduced in the main paper.

\textbf{(H\begin{tiny}{WQC}\end{tiny})} \ \ \  $f(\cdot)$ is $\mathcal{C}^1$ and exists $\tau>0$ and $x^\star$ s.t. $\langle\nabla f(x),x-x^{\star}\rangle \ge \tau(f(x)-f(x^{\star}))$ for all $x\in\R^d$.

\textbf{(H\begin{tiny}{P\L{}}\end{tiny})} \ \ \ \ \ \ $f(\cdot)$ is $\mathcal{C}^1$ and there exists $\mu>0$ s.t. $\|\nabla f(x)\|^2\ge2\mu(f(x)-f(x^\star))$ for all $x\in\R^d$.

\textbf{(H\begin{tiny}{RSI}\end{tiny})} \ \ \ \ \ $f(\cdot)$ is $\mathcal{C}^1$ and there exists $\mu>0$ s.t. $\langle\nabla f(x),x-x^\star\rangle\ge\frac{\mu}{2}\|x-x^\star\|^2$ for all $x\in\R^d$.

\subsection{Supporting lemmas}
The following bound on the spectral norm also be found in~\cite{mertikopoulos2018convergence,krichene2017acceleration}. We report the proof for completeness.
\begin{lemma}
Consider two symmetric $d-$dimensional square matrices $P$ and $Q$. We have
$$\tr(PQ) \le  d\cdot \|P\|_S\cdot\|Q\|_S. $$
\label{lemma:CONV_trace_product}
\end{lemma}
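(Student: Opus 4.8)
The plan is to reduce the statement to elementary facts about the spectral decomposition of a symmetric matrix, with no appeal to anything deeper. First I would diagonalize one of the two matrices, say $Q = \sum_{k=1}^d \mu_k q_k q_k^\top$, where $\{q_k\}_{k=1}^d$ is an orthonormal eigenbasis of $Q$ (which exists since $Q$ is symmetric) and $\mu_k$ are the corresponding real eigenvalues, so that $\|Q\|_S = \max_k |\mu_k|$. By linearity of the trace and the identity $\tr(P q_k q_k^\top) = q_k^\top P q_k$, we get $\tr(PQ) = \sum_{k=1}^d \mu_k \, q_k^\top P q_k$.

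Next I would bound each summand separately. For the scalar factor, $|\mu_k| \le \|Q\|_S$ by the choice above. For the quadratic form, $|q_k^\top P q_k| \le \|q_k\| \, \|P q_k\| \le \|P\|_S \, \|q_k\|^2 = \|P\|_S$ by Cauchy–Schwarz and the definition of the operator norm (here symmetry of $P$ is not even needed, though it is of course available). Summing the $d$ terms then yields $|\tr(PQ)| \le \sum_{k=1}^d \|Q\|_S \, \|P\|_S = d \, \|P\|_S \, \|Q\|_S$, which gives the claim (in fact a slightly stronger version, with the absolute value on the left-hand side).

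An equally short alternative, which I might mention but would not adopt as the main argument, goes through the Frobenius norm: the Cauchy–Schwarz inequality for the Hilbert–Schmidt inner product gives $\tr(PQ) \le \|P\|_F \|Q\|_F$, and for any $d \times d$ matrix $M$ one has $\|M\|_F^2 = \sum_i \sigma_i(M)^2 \le d \, \|M\|_S^2$; combining these yields $\tr(PQ) \le \sqrt{d}\,\|P\|_S \cdot \sqrt{d}\,\|Q\|_S = d\,\|P\|_S\,\|Q\|_S$.

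There is no genuine obstacle here — the result is routine. The only points requiring a little care are notational: to make explicit that $\|\cdot\|_S$ denotes the spectral (operator) norm, which for a symmetric matrix equals the largest eigenvalue in absolute value, and to keep straight which matrix is being diagonalized so that the per-term estimate is applied correctly. I would present the first argument, since it is fully self-contained and the factor of $d$ arises transparently as the number of terms in the eigenvalue sum.
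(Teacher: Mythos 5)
Your proof is correct, but it takes a genuinely different route from the paper's. The paper expands the trace entrywise as $\tr(PQ)=\sum_{j=1}^d P_j^T Q_j$, where $P_j$ and $Q_j$ are the $j$-th rows/columns (using symmetry to identify these), applies Cauchy--Schwarz to each inner product of columns, and then bounds each column norm by the spectral norm via $\|A\|_S\ge\|Ae_j\|=\|A_j\|$. You instead invoke the spectral theorem to diagonalize $Q$, reduce the trace to a sum of $d$ eigenvalue-weighted quadratic forms $\mu_k\, q_k^T P q_k$, and bound each factor separately. Both arguments are elementary, and the factor $d$ arises in both as the number of terms in a sum; your version has the mild advantage of yielding the stronger conclusion $|\tr(PQ)|\le d\|P\|_S\|Q\|_S$ explicitly and of not needing symmetry of $P$ at all (only of $Q$, to diagonalize it), whereas the paper's version is slightly more bare-hands in that it avoids the spectral theorem entirely. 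Your Frobenius-norm alternative is also valid and is perhaps the shortest route, at the cost of quoting $\|M\|_F^2\le d\|M\|_S^2$. Any of the three would serve; there is no gap in what you wrote.
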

\begin{proof}
Let $P_j$ and $Q_j$ be the j-th row(column) of $P$ and $Q$, respectively.
\begin{align}
    \tr(PQ) &= \sum_{j =1}^d P_j^TQ_j \le \sum_{j =1}^d \|P_j\|\cdot\|Q_j\| \nonumber \le \sum_{j =1}^d \|P\|_S\cdot\|Q\|_S = d\cdot \|P\|_S\cdot\|Q\|_S,
\end{align}
where we first used the  Cauchy-Schwarz inequality, and then the following inequality:
$$\|A\|_S = \sup_{\|z\|\le 1}\|Az\|\ge \|Ae_j\| = \|A_j\|,$$
where $e_j$ is the j-th vector of the canonical basis of $\mathbb{R}^d$.
\end{proof}

We use the previous lemma to derive another key result below.

\begin{lemma}
Assume (\textbf{H}). For any volatility matrix $\sigma(\cdot)$ such that $\|\sigma\sigma^T\|_S$ is upper bounded by $\sigma^2_*$, we have
$$\tr\left(\sigma \sigma^T\right) \le d \sigma^2_*, \quad \quad  \tr\left(\sigma\sigma^T\nabla^2 f(x)\right) \le Ld \sigma^2_*.$$
\label{lemma:CONV_trace_star_MB}
\end{lemma}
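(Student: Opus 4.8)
The plan is to deduce both bounds directly from Lemma~\ref{lemma:CONV_trace_product}, which already controls the trace of a product of two symmetric matrices by $d$ times the product of their spectral norms. So nothing new is really needed beyond choosing the right pair of matrices and recording a couple of elementary facts.

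For the first inequality I would apply Lemma~\ref{lemma:CONV_trace_product} with $P = \sigma\sigma^T$ and $Q = \I_d$. The matrix $\sigma\sigma^T$ is symmetric (in fact positive semidefinite), $\I_d$ is symmetric with $\|\I_d\|_S = 1$, and $\tr(\sigma\sigma^T\I_d) = \tr(\sigma\sigma^T)$; hence $\tr(\sigma\sigma^T) \le d\,\|\sigma\sigma^T\|_S\,\|\I_d\|_S \le d\sigma^2_*$. Equivalently, one can just note that $\tr(\sigma\sigma^T)$ is the sum of the $d$ eigenvalues of $\sigma\sigma^T$, each of which is bounded by $\|\sigma\sigma^T\|_S \le \sigma^2_*$.

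For the second inequality I would first record that under (\textbf{H}) each $f_i$ — and therefore $f = \frac1N\sum_{i} f_i$ — is at least $\mathcal{C}^2$, so $\nabla^2 f(x)$ is a well-defined symmetric matrix (symmetry by Schwarz's theorem), and $L$-smoothness of $f$ is equivalent to $-L\,\I_d \preceq \nabla^2 f(x) \preceq L\,\I_d$, i.e. $\|\nabla^2 f(x)\|_S \le L$ for every $x\in\R^d$. Then I apply Lemma~\ref{lemma:CONV_trace_product} with $P = \sigma\sigma^T$ and $Q = \nabla^2 f(x)$, both symmetric, to obtain $\tr\!\left(\sigma\sigma^T\nabla^2 f(x)\right) \le d\,\|\sigma\sigma^T\|_S\,\|\nabla^2 f(x)\|_S \le L\,d\,\sigma^2_*$.

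There is essentially no real obstacle in this lemma: the only things worth checking are the symmetry hypotheses needed to invoke Lemma~\ref{lemma:CONV_trace_product} (immediate for $\sigma\sigma^T$ and, via Schwarz's theorem, for the Hessian) and the standard translation of $L$-smoothness of $f$ into the spectral-norm bound $\|\nabla^2 f(x)\|_S \le L$. Both are routine, so the proof is short.
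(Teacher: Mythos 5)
Your proposal is correct and follows essentially the same route as the paper: both bounds are obtained by invoking Lemma~\ref{lemma:CONV_trace_product} with $P=\sigma\sigma^T$ and $Q$ equal to $\I_d$ or $\nabla^2 f(x)$, combined with the spectral-norm bound $\|\nabla^2 f(x)\|_S\le L$ from smoothness. Your added remarks on symmetry of the Hessian and the eigenvalue interpretation of the trace are fine but not needed beyond what the paper already does.
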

\begin{proof} We will just prove the first inequality, since the proof for the second is very similar.
\begin{equation*}
        \tr\left(\sigma \sigma^T\nabla^2 f(x)\right)\le d \|\nabla^2 f(x)\|_S \|\sigma \sigma^T\|_S
        \le Ld \sigma^2_*,
\end{equation*}
where in the equality we used the cyclicity of the trace, in the first inequality we used Lemma~\ref{lemma:CONV_trace_product} and in the last inequality we used and smoothness. 
\end{proof}

\subsection{Analysis of MB-PGF}
\label{sec:proofs_MB_PGF}
We provide a non-asymptotic analysis and then derive asymptotic rates.
\subsubsection{Non-asymptotic rates}
\label{sec:non_asy_GF}

These rates for MB-PGF are sketched in Sec.~\ref{sec:rates}. We define $\varphi(t) := \int_0^t \psi(s) ds$. As~\cite{mertikopoulos2018convergence, krichene2017acceleration}, we introduce a bound on the volatility in order to use Lemma~\ref{lemma:CONV_trace_star_MB}.

\textbf{(H$\boldsymbol{\sigma}$)} $\sigma^2_*:=\sup_{x\in\R^d}\|\sigma_{\text{MB}}(x)\sigma_{\text{MB}}(x)^T\|_S<\infty$, where $\| \cdot \|_S$ denotes the spectral norm.

\begin{frm-thm-app}[Restated Thm.~\ref{prop:CONV_GF_smooth}] Assume \textbf{(H)}, \textbf{(H$\boldsymbol{\sigma}$)}. Let $t>0$ and $\tilde t\in [0,t]$ be a random time point with distribution $\frac{\psi(\tilde t)}{\varphi(t)}$ for $\tilde t\in[0,t]$ (and $0$ otherwise). The solution to \underline{MB-PGF} is s.t.
$$\E\left[\|\nabla f(X(\tilde t))\|^2\right] \le \frac{f(x_0)-f(x^{\star})}{\varphi(t)}+ \frac{L \ d \ \sigma_*^2 h}{2 \ \varphi(t)}\int_0^t\frac{\psi(s)^2}{b(s)}ds.$$
\vspace{-3mm}
\end{frm-thm-app}

\begin{proof}
Define the energy $\EE\in\mathcal{C}^2(\R^d, \R_{+})$ such that $\EE(x) := f(x)-f(x^\star)$. First, we find a bound on the infinitesimal diffusion generator of the stochastic process $\{\EE(X(t))\}_{t\ge0}$, which generalizes the concept of derivative for stochastic systems and is formally defined in App.~\ref{subsec:ito}.
\begin{equation*}
    \begin{split}
        \mathscr{A}\EE(X(t)) & = \frac{h \psi(t)^2}{2b(t)} \tr\left( \sigma_{\text{MB}}(X(t)) \sigma_{\text{MB}}(X(t))^T \partial_{xx}\EE(X(t)) \right) + \langle\partial_{x}\EE(X(t)),-\psi(t)\nabla f(X(t))\rangle\\
        & \le \frac{h \ L \ d \ \psi(t)^2}{2b(t)}\sigma_*^2 - \psi(t)\|\nabla f(X(t))\|^2,
    \end{split}
\end{equation*}
where in the inequality we used Lemma~\ref{lemma:CONV_trace_star_MB}.

Note that the definition of $\mathscr{A}\EE(X(t))$ in Eq.~\eqref{eqn:SDE_diffusion_op} does not include the term $\langle\partial_{x}\EE,\sigma(t) dB(t) \rangle$ that vanishes when taking the expectation of the stochastic integral in Eq.~\eqref{eq:dynkin}. Therefore, integrating the bound above yields
\begin{equation}
\E[\EE(X(t),t)]-\EE(x_0,0)\le \frac{h L d \sigma_*^2}{2}\int_0^t \frac{\psi(s)^2}{b(s)} ds-\E\left[\int_0^t \psi(s)\|\nabla f(X(s))\|^2 ds\right].
\label{eq:before_trick_inf}
\end{equation} 
Next, notice that, since $\int_0^t\frac{\psi(s)}{\varphi(t)}dt = 1$, the function $s\mapsto \frac{\psi(s)}{\varphi(t)}$ defines a probability distribution. Let $\tilde t\in[0,t]$ have such distribution;  using the law of the unconscious statistician
$$\E[\|\nabla f(X(\tilde t)\|^2] = \frac{1}{\varphi(t)}\int_0^t \psi(s)\|\nabla f(X(s))\|^2 ds.$$
This trick was also used in the original SVRG paper~\cite{johnson2013accelerating}. To conclude, we plug in the last formula into Eq.~\eqref{eq:before_trick_inf}:
$$\E[\EE(X(t),t)]-\EE(x_0,0)\le \frac{h L d \sigma_*^2}{2}\int_0^t \frac{\psi(s)^2}{b(s)} ds-\varphi(t)\E\left[\|\nabla f(X(\tilde t)\|^2\right].$$
The result follows after dividing both sides by $\varphi(t)$, which is always positive for $t>0$.
\end{proof}

\begin{frm-thm-app}[Restated Thm.~\ref{prop:CONV_GF_WQC}]
Assume \textbf{(H)}, \textbf{(H$\boldsymbol{\sigma}$)}, \textbf{(H\begin{tiny}{WQC}\end{tiny})}. Let $\tilde t$ be as in Thm.~\ref{prop:CONV_GF_smooth}. The solution to \underline{MB-PGF} is s.t.
\begin{equation}
\tag{W1}
    \E \left[f(X(\tilde t)) -f(x^{\star})\right]\le \frac{\|x_0-x^{\star}\|^2}{2 \ \tau \ \varphi(t) }+ \frac{h \ d \ \sigma^2_*}{2 \ \tau \ \varphi(t)}\int_0^t\frac{\psi(s)^2}{b(s)}ds \ \ \ \ \ \ \ \ \ \ \ \ \ \ \ \ \ \ \ \ \  
\end{equation}
\begin{equation}
    \tag{W2}
    \E\left[(f(X(t))-f(x^{\star}))\right]  \le\frac{\|x_0-x^{\star}\|^2}{2 \ \tau \ \varphi(t) }+ \frac{h \ d \ \sigma^2_*}{2 \ \tau \ \varphi(t) }\int_0^t(L \ \tau \ \varphi(s)+1)\frac{\psi(s)^2}{b(s)}ds.
\end{equation}
    \vspace{-2mm}
\end{frm-thm-app}

\begin{proof} We prove the two formulas separately.

\underline{First formula.} Define the energy $\EE\in\mathcal{C}^2(\R^d, \R_{+})$ such that  $\EE(x) := \frac{1}{2}\|x-x^\star\|^2$. First, we find a bound on the infinitesimal diffusion generator of the stochastic process $\{\EE(X(t))\}_{t\ge0}$.

\begin{equation*}
    \begin{split}
        &\mathscr{A}\EE(X(t))=\\ & = \frac{h \ \psi(t)^2}{2b(t)}\tr\left( \sigma_{\text{MB}}(X(t)) \sigma_{\text{MB}}(X(t))^T \partial_{xx}\EE(X(t))\right) + \langle\partial_{x}\EE(X(t)),-\psi(t)\nabla f(X(t))\rangle\\
        & \le \frac{h \ d \ \psi(t)^2}{2b(t)}\sigma^2_* - \psi(t) \langle\nabla f(X(t)),X(t)-x^\star\rangle\\
        & \le \frac{h \ d \ \psi(t)^2}{2b(t)}\sigma^2_* - \tau \psi(t) (f(X(t))-f(x^\star)),\\        
    \end{split}
\end{equation*}

where in the first inequality we used Lemma~\ref{lemma:CONV_trace_star_MB} and in the second inequality we used weak-quasi-convexity.
Integrating this bound (see Eq.~\eqref{eq:dynkin}), we get
\begin{equation*}
    \E[\EE(X(t),t)]-\EE(x_0,0)\le\frac{h d \sigma_*^2}{2}\int_0^t \frac{\psi(s)^2}{b(s)} ds -\tau \E\left[\int_0^t\psi(s) (f(X(s))-f(x^\star)) ds\right].
\end{equation*}

Proceeding again as in the proof of Thm.~\ref{prop:CONV_GF_smooth} (above), we get the desired result.

\underline{Second formula : } Define the energy $\EE\in\mathcal{C}^2(\R^d\times \R, \R_{+})$ such that $\EE(x,t) := \tau \varphi(t) (f(x)-f(x^\star)) + \frac{1}{2}\|x-x^\star\|^2$. First, we find a bound on the infinitesimal diffusion generator of the stochastic process $\{\EE(X(t),t)\}_{t\ge0}$.
\begin{align*}
    &\mathscr{A}\EE(X(t),t)=\\
    & =\partial_t \EE(X(t),t) + \frac{h \ \psi(t)^2}{2b(t)}\tr\left( \sigma_{\text{MB}}(X(t)) \sigma_{\text{MB}}(X(t))^T \partial_{xx}\EE(X(t),t) \right)\\
    & \ \ \ + \langle\partial_{x}\EE(X(t),t),-\psi(t)\nabla f(X(t))\rangle\\
    & \le  \tau \psi(t) (f(X(t))-f(x^\star)) + \frac{h \ d \ \psi(t)^2 }{2b(t)}(L\tau\varphi(t)+1) \sigma_*^2 \\
    & \ \ \ \ +  \langle\tau \varphi(t) \nabla f(X(t)) + X(t)-x^\star, -\psi(t) \nabla f(X(t))\rangle\\
    & \le  \tau \psi(t) (f(X(t))-f(x^\star)) -\psi(t) \langle \nabla f(X(t)), X(t)-x^\star\rangle + \frac{h \ d \ \psi(t)^2 }{2b(t)}(L\tau\varphi(t)+1) \sigma_*^2\\
    &\le \frac{h \ d \ \sigma_*^2}{2}\frac{(L\tau\varphi(t)+1)\psi(t)^2}{b(t)}  ,
\end{align*}
 where in the first inequality we used the fact that $\dot\varphi(t) = \psi(t)$ and Lemma~\ref{lemma:CONV_trace_star_MB}; in the second inequality we discarded a negative term; in the third inequality we used weak-quasi-convexity.
Next, after integration (see Dynkin formula Eq.~\eqref{eq:dynkin}), plugging in the definition of $\EE$, we get
\begin{equation*}
     \tau \varphi(t) \E\left[f(X(t))-f(x^\star)\right] + \frac{1}{2}\E\left[\|X(t)-x^\star\|^2\right] \le\frac{1}{2}\|x_0-x^\star\|^2+ \frac{d h \sigma_*^2}{2}\int_0^t\frac{(L\tau\varphi(s)+1)\psi(s)^2}{b(s)}.
 \end{equation*}
Discarding the positive term $\frac{1}{2}\E\left[\|X(t)-x^\star\|^2\right]$ on the LHS and dividing\footnote{$\varphi(t)$ is the integral of $\psi(t)$, which starts positive, so it is positive for $t>0$.} everything by $\tau \varphi(t)$ we get the result. \
\end{proof}

\begin{frm-thm}[Restated Thm.~\ref{prop:CONV_GF_PL}]
Assume \textbf{(H)}, \textbf{(H$\boldsymbol{\sigma}$)}, \textbf{(H\begin{tiny}{P\L{}}\end{tiny})}. The solution to \underline{MB-PGF} is s.t.
$$\E[f(X(t))-f(x^{\star})] \le e^{-2\mu \varphi(t)}(f(x_0)-f(x^{\star})) + \frac{h \ L \ d \ \sigma^2_* }{2} \int_0^t\frac{\psi(s)^2}{b(s)} e^{-2\mu (\varphi(t)-\varphi(s))}ds.$$
\vspace{-3mm}
\end{frm-thm}

\begin{proof}
Define the energy $\EE\in\mathcal{C}^2(\R^d\times \R, \R_{+})$ such that $\EE(x,t) := e^{2\mu \varphi(t)}(f(x)-f(x^\star))$. First, we find a bound on the infinitesimal diffusion generator of the stochastic process $\{\EE(X(t),t)\}_{t\ge0}$.
\begin{equation*}
\begin{split}
     &\mathscr{A}\EE(X(t),t)=\\
     &=\partial_t \EE(X(t),t) + \frac{h \ \psi(t)^2}{2b(t)}\tr\left( \sigma_{\text{MB}}(X(t)) \sigma_{\text{MB}}(X(t))^T \partial_{xx}\EE(X(t),t) \right)\\& \ \ \ + \langle\partial_{x}\EE(X(t),t),-\psi(t)\nabla f(X(t))\rangle\\
    & \le 2\mu \ \psi(t) \ e^{2\mu \varphi(t)}(f(X(t))-f(x^\star))+
        \frac{h \ d \ L \ \psi(t)^2}{2b(t)} \sigma_*^2 e^{2\mu \varphi(t)} - \psi(t) \ e^{2\mu \varphi(t)} \|\nabla f(X(t))\|^2\\ 
        &\le \frac{h \ d \ L \ \psi(t)^2}{2b(t)} \sigma_*^2 e^{2\mu \varphi(t)},
        \\   
\end{split}
\end{equation*}
where in the first inequality we used the fact that $\dot\varphi(t) = \psi(t)$ and Lemma~\ref{lemma:CONV_trace_star_MB} and in the second inequality we used the P\L{} assumption.

Finally, after integration (see Eq.~\eqref{eq:dynkin}), plugging in the definition of $\EE$, we get
$$e^{2\mu \varphi(t)}\E[f(X(t))-f(x^\star)] \le f(x_0)-f(x^\star) + \frac{h \ d \ L \ \sigma_*^2}{2}  \int_0^t\frac{\psi(s)^2}{2b(s)} e^{2\mu \varphi(s)}ds.$$
The statement follows once we divide everything by $e^{2\mu \varphi(t)}$.
\end{proof}

\newpage

\subsection{Asymptotic rates for decreasing adjustment function}
\label{sec:proofs_MB_PGF_decrease}

\begin{corollary}Assume \textbf{(H)}, \textbf{(H$\boldsymbol{\sigma}$)}. Let $t>0$ and $\tilde t\in [0,t]$ be a random time point with distribution $\frac{\psi(\tilde t)}{\varphi(t)}$ for $\tilde t\in[0,t]$ (and $0$ otherwise). If $\psi(\cdot)$ has the form $\psi(t)=1/(t+1)^a$ and $b(t)=b\ge1$ then \underline{MB-PGF} is s.t.
        \begin{equation*}
            \E \left[\|\nabla f(X(\tilde t))\|^2\right] \le \begin{cases}
            \mathcal{O}\left(\frac{1}{t^{a}}\right) & 0<a< \frac{1}{2}\\
            \mathcal{O}\left(\frac{\log(t)}{\sqrt{t}}\right) & a = \frac{1}{2}\\
            \mathcal{O}\left(\frac{1}{t^{1-a}}\right) & \frac{1}{2}<a<1\\
            \mathcal{O}\left(\frac{1}{\log(t)}\right) & a=1
            \end{cases}.
        \end{equation*}  
\label{cor:CONV_GF_smooth_cor}
\end{corollary}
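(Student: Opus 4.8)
The plan is to specialize the non-asymptotic bound of Thm.~\ref{prop:CONV_GF_smooth} to the prescribed $\psi(t) = 1/(t+1)^a$ and $b(t)=b\ge 1$, and then simply read off the dominant term in each regime of $a$. The only quantities that need to be controlled are $\varphi(t) = \int_0^t (s+1)^{-a}\,ds$ and $I(t) := \int_0^t \tfrac{\psi(s)^2}{b(s)}\,ds = \tfrac{1}{b}\int_0^t (s+1)^{-2a}\,ds$; once their growth is known, Thm.~\ref{prop:CONV_GF_smooth} gives the rate as $\mathcal{O}\!\left(1/\varphi(t)\right) + \mathcal{O}\!\left(I(t)/\varphi(t)\right)$.

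First I would compute the two integrals in closed form. For $a\neq 1$, $\varphi(t) = \frac{(t+1)^{1-a}-1}{1-a}$, which is $\Theta(t^{1-a})$ since $0<a<1$; for $a=1$, $\varphi(t) = \log(t+1) = \Theta(\log t)$. Similarly, for $a\neq 1/2$, $I(t) = \frac{1}{b}\cdot\frac{(t+1)^{1-2a}-1}{1-2a}$, which is $\Theta(t^{1-2a})$ when $a<1/2$ and $\Theta(1)$ (a convergent integral) when $a>1/2$; for $a=1/2$, $I(t) = \frac{1}{b}\log(t+1) = \Theta(\log t)$.

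Then I would substitute into Thm.~\ref{prop:CONV_GF_smooth} and compare the two terms in each case. For $0<a<1/2$ the first term is $\Theta(t^{a-1})$ and the second is $\Theta(t^{-a})$; since $a<1/2$ the second dominates, giving $\mathcal{O}(t^{-a})$. At $a=1/2$ the first term is $\Theta(t^{-1/2})$ and the second is $\Theta(\log(t)/\sqrt{t})$, so the rate is $\mathcal{O}(\log(t)/\sqrt{t})$. For $1/2<a<1$ both terms are $\Theta(t^{a-1})=\Theta(t^{-(1-a)})$, hence $\mathcal{O}(t^{-(1-a)})$. Finally, at $a=1$ both terms are $\Theta(1/\log t)$, giving $\mathcal{O}(1/\log t)$.

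The computation is entirely elementary, so there is no genuine obstacle beyond bookkeeping; the only points requiring mild care are the transitions at $a=1/2$ (where $I(t)$ switches from polynomially growing to bounded, passing through a logarithm) and at $a=1$ (where $\varphi(t)$ does the same). In those two borderline cases one must keep the exact $\log(t+1)$ factors rather than absorbing them into constants, which is precisely what produces the $\log$ terms in the statement. I would also note that all hidden constants depend on $f(x_0)-f(x^{\star})$, $L$, $d$, $\sigma_*^2$, $h$, $b$ and $a$ but not on $t$, so the $\mathcal{O}(\cdot)$ notation is understood as $t\to\infty$.
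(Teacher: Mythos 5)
Your proposal is correct and follows essentially the same route as the paper: specialize Thm.~\ref{prop:CONV_GF_smooth} to $\psi(t)=(t+1)^{-a}$, estimate the growth of $\varphi(t)$ and of $\int_0^t\psi(s)^2\,ds$, and compare the deterministic and stochastic terms in each regime of $a$. Your case analysis and the resulting dominant terms all match the paper's (your version is in fact slightly more careful, as the paper's proof contains a sign typo in the exponent of the deterministic term).
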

\begin{proof}
Thanks to Prop.~\ref{prop:CONV_GF_smooth}, we have
$$\E\left[\|\nabla f(X(\tilde t))\|^2\right] \le \frac{f(x_0)-f(x^{\star})}{\varphi(t)}+ \frac{L \ d \ \sigma_*^2 h}{2 \ b \ \varphi(t)}\int_0^t \psi(s)^2ds.$$
First, notice that if $a>1$, $\lim_{t\to\infty}\varphi(t)<\infty$ and we cannot retrieve convergence. Else, for $0<a<1$, the deterministic term $\frac{f(x_0)-f(x^\star)}{\varphi(t)}$ is  $\mathcal{O}\left(t^{1-a}\right)$ and $\mathcal{O}\left(\log^{-1}(t)\right)$ for $a=1$. The stochastic term $\frac{1}{\varphi(t)}\int_0^t\psi(s)^2ds$ is $\mathcal{O}\left(t^{-a}\right)$ for $a\in (0,1/2)\cup(1/2,1)$, $\mathcal{O}\left(\frac{\log(t)}{\sqrt{t}}\right)$ for $a=\frac{1}{2}$ and $\mathcal{O}(1)$ for $a=1$. The assertion follows combining asymptotic rates just derived for the deterministic and the stochastic term.
\end{proof}

\begin{corollary}
Assume \textbf{(H)}, \textbf{(H$\boldsymbol{\sigma}$)}, \textbf{(H\begin{tiny}{WQC}\end{tiny})}. Let $\tilde t$ be as in Thm.~\ref{prop:CONV_GF_smooth}. If $\psi(\cdot)$ has the form $\psi(t)=1/(t+1)^a$ and $b(t)= b \ge1$, then the solution to \underline{MB-PGF} is s.t. 
    \begin{equation*}
        \E\left[f(X(\tilde t))-f(x^\star)\right] \le \begin{cases}
        \mathcal{O}\left(\frac{1}{t^{a}}\right) & 0<a< \frac{1}{2}\\
        \mathcal{O}\left(\frac{\log(t)}{\sqrt{t}}\right) & a = \frac{1}{2}\\
        \mathcal{O}\left(\frac{1}{t^{1-a}}\right) & \frac{1}{2}<a<1\\
        \mathcal{O}\left(\frac{1}{\log(t)}\right) & a=1\\
        \end{cases}.
    \end{equation*}
        
    Moreover, for $\frac{1}{2}\le a\le 1$ we can avoid taking a randomized time point:
    
    \begin{equation*}
        \E\left[f(X(t))-f(x^\star)\right] \le \begin{cases}
        \mathcal{O}\left(\frac{1}{t^{2a-1}}\right) & \frac{1}{2}<a< \frac{2}{3}\\
        \mathcal{O}\left(\frac{\log(t)}{t^{1/3}}\right) & a = \frac{2}{3}\\
        \mathcal{O}\left(\frac{1}{t^{1-a}}\right) & \frac{2}{3}<a<1\\
        \mathcal{O}\left(\frac{1}{\log(t)}\right) & a=1\\
        \end{cases}.
    \end{equation*}    
    
\label{cor:CONV_GF_WQC_cor}
\end{corollary}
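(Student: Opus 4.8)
The plan is to reduce the statement to elementary asymptotic estimates of three integrals, exactly mirroring the proof of Corollary~\ref{cor:CONV_GF_smooth_cor}. Since $b(t)=b$ is constant it factors out of every integral, so I would set $b=1$ and reinsert the harmless factor $1/b$ at the end. With $\psi(s)=(s+1)^{-a}$ the relevant building blocks are
\[
\varphi(t)=\int_0^t(s+1)^{-a}\,ds,\qquad I_1(t):=\int_0^t(s+1)^{-2a}\,ds,\qquad I_2(t):=\int_0^t\varphi(s)(s+1)^{-2a}\,ds,
\]
whose growth is found by direct integration: $\varphi(t)=\Theta(t^{1-a})$ for $a\in(0,1)$ and $\varphi(t)=\Theta(\log t)$ for $a=1$ (while $\varphi(\infty)<\infty$ for $a>1$, so no rate is possible — hence the restriction $a\le1$); $I_1(t)=\Theta(t^{1-2a})$ for $a<1/2$, $I_1(t)=\Theta(\log t)$ for $a=1/2$, $I_1(t)=\Theta(1)$ for $a>1/2$; and, since $\varphi(s)(s+1)^{-2a}=\Theta(s^{1-3a})$ for large $s$, $I_2(t)=\Theta(t^{2-3a})$ for $a<2/3$, $I_2(t)=\Theta(\log t)$ for $a=2/3$, $I_2(t)=\Theta(1)$ for $a>2/3$.

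For the randomized bound I would invoke formula (W1) of Theorem~\ref{prop:CONV_GF_WQC}, which gives $\E[f(X(\tilde t))-f(x^\star)]\le \frac{\|x_0-x^\star\|^2}{2\tau\varphi(t)}+\frac{hd\sigma_*^2}{2\tau b\,\varphi(t)}I_1(t)$. The deterministic term is $\Theta(1/\varphi(t))$, i.e. $\Theta(t^{a-1})$ for $a<1$ and $\Theta(1/\log t)$ for $a=1$; the stochastic term is $\Theta(I_1(t)/\varphi(t))$, i.e. $\Theta(t^{-a})$ for $a\in(0,1/2)$, $\Theta(\log t/\sqrt t)$ for $a=1/2$, $\Theta(t^{a-1})$ for $a\in(1/2,1)$, and $\Theta(1/\log t)$ for $a=1$. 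Taking in each regime the slower-decaying of the two contributions (for $a<1/2$ one notes $-a>a-1$, so the stochastic term dominates; on $(1/2,1]$ the two agree) yields exactly the four cases of the first display — this part is word-for-word the argument of Corollary~\ref{cor:CONV_GF_smooth_cor}.

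For the non-randomized bound I would use formula (W2), whose integrand $(L\tau\varphi(s)+1)\psi(s)^2$ splits into the piece giving $L\tau\,I_2(t)$ and the piece giving $I_1(t)$, so the only new contribution relative to (W1) is $\frac{hd\sigma_*^2}{2\tau b\,\varphi(t)}\cdot L\tau\, I_2(t)$. Dividing the estimates for $I_2$ by $\varphi(t)=\Theta(t^{1-a})$ gives $\Theta(t^{1-2a})=\Theta(t^{-(2a-1)})$ for $a\in(1/2,2/3)$, $\Theta(\log t/t^{1/3})$ for $a=2/3$, $\Theta(t^{-(1-a)})$ for $a\in(2/3,1)$, and $\Theta(1/\log t)$ for $a=1$. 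Comparing with the $(W1)$-type contributions $\Theta(t^{a-1})$ (the deterministic term and, since $I_1=\Theta(1)$ here, the "$+1$" piece), one checks $a-1-(1-2a)=3a-2<0$ on $(1/2,2/3)$, so the new term dominates there; on $[2/3,1]$ all contributions decay at the rate listed. Assembling the cases gives the second display. (At the boundary $a=1/2$ one has $I_2(t)/\varphi(t)=\Theta(1)$, so (W2) provides no decay — this is why the non-randomized cases begin strictly above $1/2$.)

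The argument is entirely routine: the only step requiring care is the bookkeeping of which of the two or three contributions is dominant in each sub-interval of $a$, together with correctly tracking the logarithmic corrections at the thresholds $a\in\{1/2,2/3,1\}$. I do not anticipate any genuine obstacle.
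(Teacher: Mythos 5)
Your proposal is correct and follows essentially the same route as the paper: the first display is obtained exactly as in Corollary~\ref{cor:CONV_GF_smooth_cor} but starting from (W1), and the second from (W2) by estimating $\int_0^t(L\tau\varphi(s)+1)\psi(s)^2\,ds$ (your $I_1$, $I_2$ split is just a tidier bookkeeping of the paper's $\mathcal{O}(\max\{t^{2-3a},t^{1-2a}\})$ argument) and dividing by $\varphi(t)$. Your dominance comparisons at the thresholds $a\in\{1/2,2/3,1\}$, including the observation that (W2) gives no decay at $a=1/2$, match the paper's treatment.
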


\begin{proof}
The first part is identical to Cor.~\ref{cor:CONV_GF_smooth_cor} using this time Prop.~\ref{prop:CONV_GF_WQC}.
Regarding the second part, again from Prop.~\ref{prop:CONV_GF_WQC} we have 
\begin{equation*}
        \E \left[f(X(\tilde t)) -f(x^{\star})\right]\le \frac{\|x_0-x^{\star}\|^2}{2 \ \tau \ \varphi(t) }+ \frac{h \ d \ \sigma^2_*}{2 \ \tau \ b \ \varphi(t)}\int_0^t\psi(s)^2ds.
\end{equation*}
    
The deterministic term  $\frac{1}{2 \tau \varphi(t) }\|x_0-x^\star\|^2$ is $\mathcal{O}\left(\frac{1}{t^{1-a}}\right)$ for $0<a<1$, $\mathcal{O}\left(\frac{1}{\log(t)}\right)$ for $a=1$ and $\mathcal{O}(1)$ (i.e. does not converge to 0) for $a>1$.

The stochastic term $\frac{h \ d \ \sigma^2_*}{2 \ \tau \ \varphi(t)}\int_0^t\psi(s)^2ds$
requires a more careful analysis : first of all notice that $(L\tau\varphi(t)+1)\psi(t)^2$ is $\mathcal{O}\left(\max\left\{t^{1-3a},t^{-2a}\right\}\right)$. Hence its integral is $\mathcal{O}\left(\max\left\{t^{2-3a},t^{1-2a}\right\}\right)$ for $\frac{1}{2}< a < \frac{2}{3}$, is $\mathcal{O}(1)$ for $a>\frac{2}{3}$ and has a more complicated asymptotic behavior for $a\ne \frac{1}{2},\frac{2}{3}$. First, it is clear that, since the integral is bounded for $\frac{2}{3}<a$, the asymptotic convergence rate in this case is $\mathcal{O}\left(\frac{1}{\varphi(t)}\right) = \mathcal{O}\left(\frac{1}{t^{1-a}}\right)$ for $\frac{2}{3}<a<1$ and $\mathcal{O}\left(\frac{1}{\log(t)}\right)$ for $a=1$.
Next, we get the two pathological cases out of our way:
\begin{itemize}
    \item For $a=\frac{1}{2}$ we do not have converge, since the partial integral term $\frac{d\sigma_*^2}{2\tau \varphi(t) }\int_0^tL\tau\varphi(s)\psi(s)^2ds$ is of the same order as $\varphi(t)$.
    \item For $a=\frac{2}{3}$, $\frac{d\sigma_*^2}{2\tau \varphi(t) }\int_0^t(L\tau\varphi(s)+1)\psi(s)^2ds$ is $\mathcal{O}\left(\log(t)\right)$. Hence, the resulting asymptotic bound is $\mathcal{O}\left(\frac{\log(t)}{\varphi(t)}\right)=\mathcal{O}\left(\frac{\log(t)}{t^{1/3}}\right)$.
\end{itemize}

Last, since  for $\frac{1}{2}< a < \frac{2}{3}$ the integral term is $\mathcal{O}\left(\max\left\{t^{2-3a},t^{1-2a}\right\}\right) = \mathcal{O}(t^{2-3a})$, the convergence rate is $\mathcal{O}\left(\frac{t^{2-3a}}{\varphi(t)}\right) = \mathcal{O}(t^{1-2a}).$ This completes the proof of the assertion. 
\end{proof}

\begin{remark}
    The best achievable rate in the context of the previous corollary is corresponding to $\psi(t) = \frac{1}{\sqrt{t}}$ if we look at the infimum, but is instead corresponding to $\psi(t) = \frac{1}{t^{2/3}}$ if we just look at the final point.
\end{remark}

\begin{corollary}Assume \textbf{(H)}, \textbf{(H$\boldsymbol{\sigma}$)}, \textbf{(H\begin{tiny}{P\L{}}\end{tiny})}. If $\psi(\cdot)$ has the form $\psi(t)=1/(t+1)^a$ and $b(t)=b\ge 1$, then he solution to \underline{MB-PGF} is s.t.
\begin{equation*}
    \E\left[f(X(t))-f(x^\star)\right] \le \mathcal{O}\left(\frac{1}{t^a}\right).
\end{equation*}

\label{cor:CONV_GF_PL_cor}
\end{corollary}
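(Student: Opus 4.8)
The plan is to specialize Theorem~\ref{prop:CONV_GF_PL} to $\psi(t)=(t+1)^{-a}$ and $b(t)=b$, and then estimate the two resulting terms. With $\varphi(t)=\int_0^t(s+1)^{-a}\,ds=\frac{(t+1)^{1-a}-1}{1-a}$, which for $a\in(0,1)$ satisfies $\varphi(t)\asymp (t+1)^{1-a}$, the bound of Theorem~\ref{prop:CONV_GF_PL} becomes
\[
\E[f(X(t))-f(x^\star)]\le e^{-2\mu\varphi(t)}\bigl(f(x_0)-f(x^\star)\bigr)+\frac{h\,d\,L\,\sigma_*^2}{2b}\,I(t),\qquad I(t):=e^{-2\mu\varphi(t)}\int_0^t(s+1)^{-2a}\,e^{2\mu\varphi(s)}\,ds.
\]
Since $\varphi(t)\to\infty$, the deterministic term is $e^{-\Theta((t+1)^{1-a})}$, hence $o((t+1)^{-a})$, so everything reduces to showing $I(t)=\mathcal{O}((t+1)^{-a})$.

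To bound $I(t)$ I would split the integral at $t/2$. On $[t/2,t]$, monotonicity of $\psi$ gives $\frac{d}{ds}e^{2\mu\varphi(s)}=2\mu\psi(s)e^{2\mu\varphi(s)}\ge 2\mu\psi(t)e^{2\mu\varphi(s)}$, and integrating yields $\int_{t/2}^t e^{2\mu\varphi(s)}\,ds\le \frac{1}{2\mu\psi(t)}e^{2\mu\varphi(t)}$; together with $(s+1)^{-2a}\le(t/2+1)^{-2a}$ on this interval, the $[t/2,t]$ part of $I(t)$ is at most $\frac{(t/2+1)^{-2a}}{2\mu\psi(t)}=\frac{(t/2+1)^{-2a}(t+1)^{a}}{2\mu}=\mathcal{O}((t+1)^{-a})$. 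On $[0,t/2]$, using $(s+1)^{-2a}\le 1$ and $e^{2\mu\varphi(s)}\le e^{2\mu\varphi(t/2)}$, the contribution is at most $\frac{t}{2}e^{-2\mu(\varphi(t)-\varphi(t/2))}$, and since $\varphi(t)-\varphi(t/2)=\int_{t/2}^t(s+1)^{-a}\,ds\ge\frac{t}{2}(t+1)^{-a}\asymp t^{1-a}$, this is $\frac{t}{2}e^{-\Theta(t^{1-a})}=o((t+1)^{-a})$. Adding the two pieces gives $I(t)=\mathcal{O}((t+1)^{-a})=\mathcal{O}(t^{-a})$, which is the claimed rate. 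Equivalently, one can avoid the split and integrate by parts using $\psi(s)e^{2\mu\varphi(s)}=\frac{1}{2\mu}\frac{d}{ds}e^{2\mu\varphi(s)}$, absorbing the lower-order remainder $\int_0^t(s+1)^{-a-1}e^{2\mu\varphi(s)}\,ds$ back into $I(t)$ for large $s$.

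The only genuinely delicate point — and hence the main obstacle — is the estimate of $I(t)$: bounding $e^{-2\mu(\varphi(t)-\varphi(s))}\le 1$ throws the whole rate away, so one must exploit the trade-off between the polynomial weight $\psi(s)^2$ and the exponential discount, which is exactly what the split at $t/2$ (a Laplace-type argument) accomplishes. Two remarks I would add. First, the argument uses $a<1$; for $a=1$ we have $\varphi(t)=\log(t+1)$ and the deterministic term is only $(t+1)^{-2\mu}$, so the clean $\mathcal{O}(t^{-a})$ conclusion should be read in the regime $a\in(0,1)$ of Table~\ref{tab:asy}. Second, unlike in Corollary~\ref{cor:CONV_GF_WQC_cor} no logarithmic corrections appear at $a=\frac12,\frac23$, because the exponential discount damps the slowly varying factors and leaves $I(t)\asymp\psi(t)/(2\mu)=\Theta(t^{-a})$ over the whole range.
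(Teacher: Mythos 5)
Your proposal is correct and follows essentially the same route as the paper: specialize Thm.~\ref{prop:CONV_GF_PL}, note the deterministic term decays like $e^{-\Theta(t^{1-a})}$, and handle the noise integral by a Laplace-type split at $\hat t = t/2$, with the far part killed by $e^{-2\mu(\varphi(t)-\varphi(t/2))}$ and the near part reduced to $\Theta(\psi(t)/(2\mu))$ by extracting one factor of $\psi$ and integrating the remaining $\psi(s)e^{2\mu\varphi(s)}$ exactly. The only (cosmetic) difference is which factor of $\psi(s)^2$ you freeze as a constant on $[t/2,t]$; your closing remark that the argument requires $a<1$ matches the paper's own restriction.
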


\begin{proof}
We start from Prop.~\ref{prop:CONV_GF_PL}:
    \begin{equation*}
        \E[f(X(t))-f(x^{\star})] \le e^{-2\mu \varphi(t)}(f(x_0)-f(x^{\star})) + \frac{h \ L \ d \ \sigma^2_* }{2 b} \int_0^t\psi(s)^2 e^{-2\mu (\varphi(t)-\varphi(s))}ds.
    \end{equation*}

For $0< a <1$, the term $e^{-2\mu \varphi(t)}$ goes down exponentially fast. Thus, we just need to consider the second addend. Let $\hat t \in [0,t]$, then

\begin{align*}
    \int_0^t \psi(s)^2 e^{-2\mu (\varphi(t)-\varphi(s))}ds &\le\int_0^{\hat t} \psi(s)^2 e^{-2\mu (\varphi(t)-\varphi(s))}ds + \int_{\hat t}^t \psi(s)^2 e^{-2\mu (\varphi(t)-\varphi(s))}ds\\
    &\le e^{-2\mu (\varphi(t)-\varphi\left(\hat t\right))}\int_0^{\hat t} \psi(s)^2 ds + \frac{\psi(\hat t)}{2\mu} \int_{\hat t}^t 2\mu\psi(s) e^{-2\mu (\varphi(t)-\varphi(s))}ds.
\end{align*}

Pick $\hat t=t/2$, notice that, since for $\psi(t) = \frac{1}{(1+t)^a}$, $\int_0^{t/2} \psi(s)^2 ds$ grows at most polynomially in $t$. Hence, first addend in the last formula decays exponentially fast. Then again we just need to consider the second addend of the last formula; in particular notice that
\begin{align*}
    \int_{\hat t}^t 2\mu\psi(s) e^{-2\mu (\varphi(t)-\varphi(s))}ds &= e^{-2\mu\varphi(t)} \int_{\hat t}^t 2\mu\psi(s) e^{2\mu\varphi(s))}ds \\
    &= e^{-2\mu\varphi(t)} \left(e^{2\mu\varphi(t)}-e^{2\mu\varphi(t/2)}\right) \\
    &= 1-e^{-2\mu(\varphi(t)-\varphi(t/2))}. \\
\end{align*}

Hence, for $t$ big enough, the considered integral will be less than $1$. All in all, we asymptotically have $\E[f(X(t))-f(x^\star)] \le \mathcal{O}(\psi(t))$, which gives the desired result.

\end{proof}

\begin{remark}
    We retrieve in continuous time the bound in~\cite{nemirovsky1983problem}: the rate is always $\Omega\left(\frac{1}{t}\right)$. 
\end{remark}


\subsubsection{Limit sub-optimality under constant adjustment function}
\label{sec:proofs_MB_PGF_ball}

  \begin{figure}
  \centering
  \begin{minipage}[b]{0.47\textwidth}
    \includegraphics[width=\textwidth]{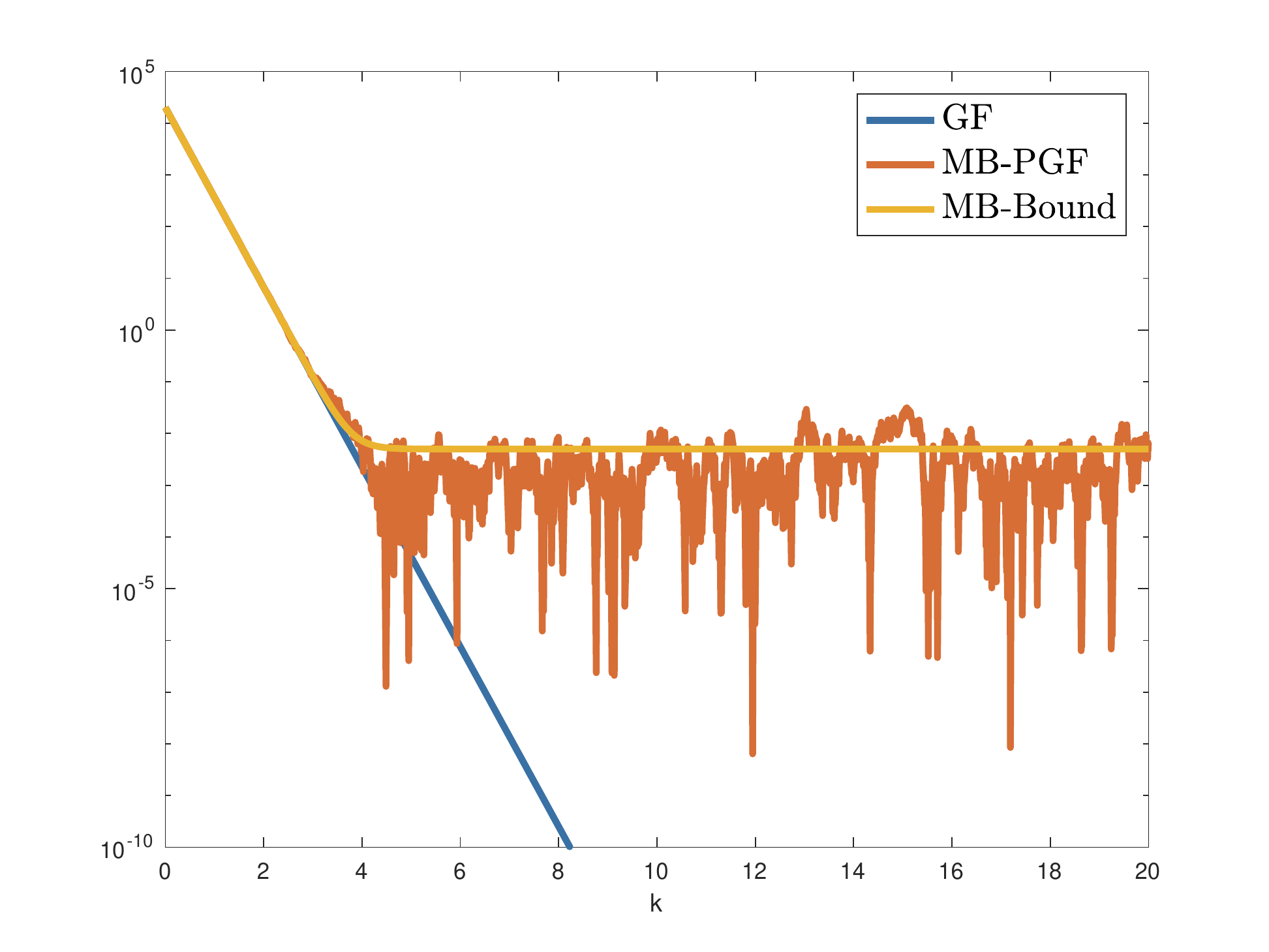}    \caption{Simulation of MB-PGF for $f(x) = \frac{1}{2} \mu\|x\|^2$ with $x\in\mathbb{R}^2$, $\sigma_*^2 = 0.1$ and $\mu = 2$. Simulation with Euler-Maruyama (stepzize $=10^{-4}$).}
    \label{fig:parabula_PL_2}
  \end{minipage}
  \hfill
  \begin{minipage}[b]{0.47\textwidth}
    \includegraphics[width=\textwidth]{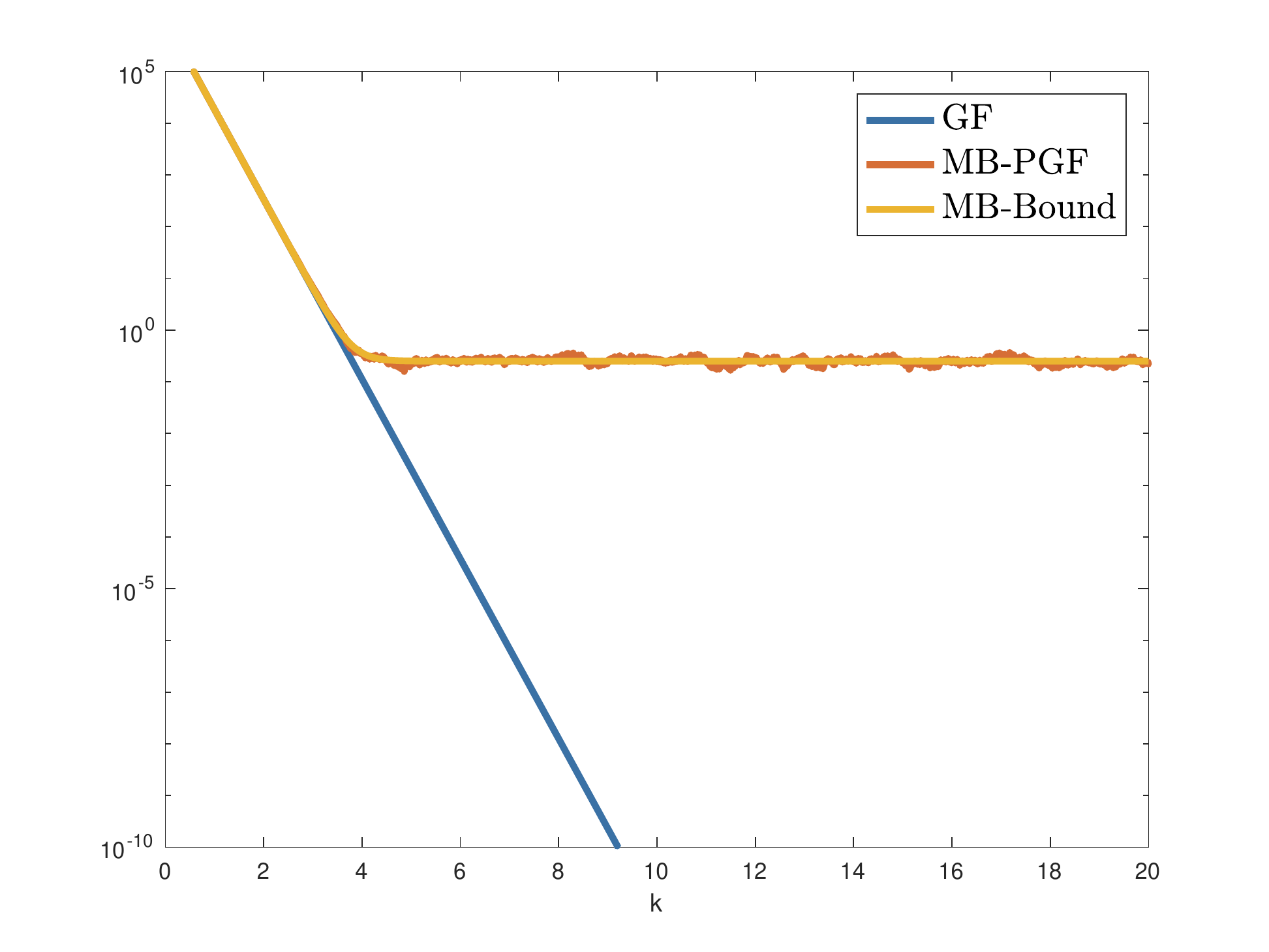}    \caption{Simulation of MB-PGF for $f(x) = \frac{1}{2} \mu \|x\|^2$ with $x\in\mathbb{R}^{100}$, $\sigma_*^2 = 0.1$ and $\mu = 2$. Simulation with Euler-Maruyama (stepzize $=10^{-4}$).}
    \label{fig:parabula_PL_100}
  \end{minipage}
\end{figure}

\begin{table*}
\centering
\begin{tabular}{| l | l | c |}
\hline
\textbf{Condition}& \textbf{Limit} & \textbf{Bound}
\\
\hline
\textbf{(H)}, \textbf{(H$\boldsymbol{\sigma}$)} & $\lim_{t\to\infty}\E \left[\|\nabla f(X(\tilde t))\|^2\right]$ &
 $\frac{L d \sigma_*^2}{2b}$ \\
  & & \\

\hline
\textbf{(H)}, \textbf{(H$\boldsymbol{\sigma}$)}, \textbf{(H\begin{tiny}{WQC}\end{tiny})}&\ $\lim_{t\to\infty}\E \left[ f(X(\tilde t)) -f(x^\star)\right]$ & $\frac{L d \sigma_*^2}{2\tau b}$ \\
& & \\
\hline
\textbf{(H)}, \textbf{(H$\boldsymbol{\sigma}$)}, \textbf{(H\begin{tiny}{P\L{}}\end{tiny})} & $\lim_{t\to\infty}\E \left[f(X(t)) -f(x^\star)\right]$  & $\frac{Ld \sigma^2_\#}{4\mu b}$ \\
& & \\
\hline 
\end{tabular}
\caption{Ball of convergence of MB-PGF under constant $\psi(t)=1$, $b(t)=b$. For $t>0$, $\tilde t\in [0,t]$ has probability distribution $\frac{\psi(s)}{\varphi(t)}$ for $s\in[0,t]$ (and $0$ otherwise).}
\label{tb:summary_rates_constant}
\end{table*}

In this paragraph we pick $\psi(t) = 1$. The results can be found in Tb.~\ref{tb:summary_rates_constant}. The only non-obvious limit is the one for P\L{} functions. By direct calculation,

\begin{equation*}
\begin{split}
\E[f(X(t))-f(x^\star)] & \le e^{-2\mu \varphi(t)}(f(x_0)-f(x^\star)) + \frac{h \ d \ L }{2b} \int_0^t\sigma_*^2 e^{-2\mu (t-s)}ds \\
&= e^{-2\mu \varphi(t)}(f(x_0)-f(x^\star)) +\frac{h \ d \ L \ \sigma_*^2}{2} \frac{1-e^{-2 \mu t}}{2\ b \ \mu}.
\end{split}
\end{equation*}
The result follows taking the limit.

\begin{example}
We can verify the results in Tb.~\ref{tb:summary_rates_constant} using the quadratic function $f(x) = \frac{\mu}{2} \|x\|^2$, which is P\L{}. This function is isotropic, so $\mu = L$. Under persistent noise $\sigma_*^2 \I_d$, where $I_d$ is the identity matrix, the MB-PGF is $dX(t) = -\mu X(t) dt +h \sigma_* dB(t) $. This has solution $\mathbb{E}[f(X(t)] = f(x_0) e^{-2\mu t} + \frac{h d \sigma_*^2}{4}$, which perfectly matches the bound in Tb.~\ref{tb:summary_rates_constant}. In Fig.~\ref{fig:parabula_PL_2} and~\ref{fig:parabula_PL_100} one can see a simulation for $d = 1$ and $d = 100$, keeping the noise constant at $\sigma^2_* = 0.1$ and $\mu = 2$. One can clearly see that the bound is increasing with the number of dimensions. Moreover, by the law of large numbers, the variance in $f(X)$ is decreasing with the number of dimensions (it is a sum of $\chi^2$ distributions).
\label{ex:lower_PL}
\end{example}

\subsection{Analysis of VR-PGF}

We remind the reader that the SVRG gradient estimate (see Sec.~\ref{sec:models}), with mini-batch size $b(t)=1$ (always assumed here) is defined as
$$\mathcal{G}_{\text{VR}}(x_k) := \nabla f_{i_k}(x_k)- \nabla f_{i_k}(\tilde{x}_k)+\nabla f(\tilde{x}_k),$$
where $f(x) = \frac{1}{N}\sum_{i = 1}^N f_i(x)$, with $\{f_i\}_{i=1}^N$ a collection of functions s.t. $f_i : \mathbb{R}^d \to \mathbb{R}$ for any $i \in \{1,\cdots,N\}$. We call $x^\star$ the unique global minimum of $f$. The stochastic gradient index $i_k$ is sampled uniformly from $\{1,\dots,N\}$  and $\tilde{x}_k\in \{x_0,x_1,\dots,x_{k-1}\}$ is the pivot used at iteration $k$. SVRG builds a sequence $(x_k)_{k\ge0}$ of estimates of the solution $x^\star$ in a recursive way:
\begin{equation*}
\tag{SVRG}
x_{k+1} = x_{k} - h \mathcal{G}_{\text{VR}}(x_k,\tilde x_{k-\xi_k}),
\end{equation*}
where $h\ge 0$. $\xi_k$ is picked to be the sawtooth wave function with period $m\in\N_+$. Also, after $m$ iterations, the standard discrete-time SVRG analysis~\cite{johnson2013accelerating,reddi2016stochastic,reddi2016proximal,allen2016variance,allen2016improved} requires \textbf{"jumping"} and set $x_k = x_{\hat r_k}$, where $\hat r_k$ is picked at random from $\{k-m,\dots,k-1\}$. This is known as \textit{Option II}~\cite{johnson2013accelerating}, as opposed to \textit{Option I} which performs no jumps. The latter variant is widely used in practice~\cite{harikandeh2015stopwasting}, but, unfortunately, is not typically analyzed in the discrete-time literature.

As in App.~\ref{sec:mb_sgd_nonasy}, we denote by $\{\mathcal{F}_k\}_{k\ge 0}$ the natural filtration induced by the stochastic process with jumps $\{x_k\}_{k\ge 0}$. The conditional mean and covariance matrix of $\mathcal{G}_{\text{VR}}$ are
\begin{align}
&\E_{\F_{k-1}}\left[\mathcal{G}_{\text{VR}}(x_k)\right]  = \nabla f(x_k),  \\
&\Sigma_{\text{VR}}(x_k,\tilde{x}_k) := \var_{\F_{k-1}}\left[\mathcal{G}_{\text{VR}}(x_k)\right] \\
& \ \ \ = \E_{\mathcal{F}_{k-1}}\left[ \left(\mathcal{G}_{\text{VR}}(x_k)-\nabla f(x_k)\right)\left(\mathcal{G}_{\text{VR}}(x_k)-\nabla f(x_k)\right)^T\right]\nonumber.
\end{align}

We start with a lemma and a corollary, which will be used both in continuous and in discrete time and that are partially derived in~\cite{johnson2013accelerating} and~\cite{allen2016improved}.

\begin{lemma} Assume \textbf{(H)}. We have
\begin{multline*}
    \tr\left(\Sigma_{\text{VR}}(x_k,\tilde{x}_k)\right)\le\E_{\mathcal{F}_{k-1}}\left[\|\mathcal{G}_{\text{VR}}(x_k)\|^2\right) \\
    \le 2\E_{\mathcal{F}_{k-1}}\|\nabla f_{i}(x_k)- \nabla f_{i}(x^\star)\|^2 + 2\E_{\mathcal{F}_{k-1}}\|\nabla f_{i}(\tilde{x}_k)-\nabla f_{i}(x^\star))\|^2.
\end{multline*}
\label{lemma:VR_trace_lemma}
\end{lemma}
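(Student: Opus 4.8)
The plan is to expand the variance in the usual way and then apply elementary inequalities. First I would use the identity $\mathbb{C}\mathrm{ov}[Z] = \E[ZZ^T] - \E[Z]\E[Z]^T$ for any random vector $Z$, applied to $Z = \mathcal{G}_{\text{VR}}(x_k)$ conditionally on $\mathcal{F}_{k-1}$. Since $\mathcal{G}_{\text{VR}}$ is unbiased, $\E_{\mathcal{F}_{k-1}}[\mathcal{G}_{\text{VR}}(x_k)] = \nabla f(x_k)$, so $\Sigma_{\text{VR}}(x_k,\tilde x_k) = \E_{\mathcal{F}_{k-1}}[\mathcal{G}_{\text{VR}}(x_k)\mathcal{G}_{\text{VR}}(x_k)^T] - \nabla f(x_k)\nabla f(x_k)^T$. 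Taking the trace and using that $\nabla f(x_k)\nabla f(x_k)^T$ is positive semidefinite (so its trace $\|\nabla f(x_k)\|^2 \geq 0$), we get $\tr(\Sigma_{\text{VR}}(x_k,\tilde x_k)) \leq \E_{\mathcal{F}_{k-1}}[\tr(\mathcal{G}_{\text{VR}}(x_k)\mathcal{G}_{\text{VR}}(x_k)^T)] = \E_{\mathcal{F}_{k-1}}[\|\mathcal{G}_{\text{VR}}(x_k)\|^2]$, which is the first inequality.

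For the second inequality, I would rewrite the VR estimator by inserting and subtracting $\nabla f_{i}(x^\star)$: recall $\mathcal{G}_{\text{VR}}(x_k) = \nabla f_{i}(x_k) - \nabla f_{i}(\tilde x_k) + \nabla f(\tilde x_k)$, where $i$ is the sampled index. Group this as $\big(\nabla f_{i}(x_k) - \nabla f_{i}(x^\star)\big) - \big(\nabla f_{i}(\tilde x_k) - \nabla f_{i}(x^\star)\big) + \nabla f(\tilde x_k)$. Since $\nabla f(x^\star) = 0$ (as $x^\star$ minimizes $f$), we have $\nabla f(\tilde x_k) = \nabla f(\tilde x_k) - \nabla f(x^\star) = \E_{i}[\nabla f_{i}(\tilde x_k) - \nabla f_{i}(x^\star)]$. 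So if we set $a := \nabla f_{i}(x_k) - \nabla f_{i}(x^\star)$ and $b := \nabla f_{i}(\tilde x_k) - \nabla f_{i}(x^\star)$, then $\mathcal{G}_{\text{VR}}(x_k) = a - b + \E_i[b] = a - (b - \E_i[b])$. Then $\E_{\mathcal{F}_{k-1}}\|\mathcal{G}_{\text{VR}}(x_k)\|^2 = \E_i\|a - (b - \E_i[b])\|^2 \leq 2\E_i\|a\|^2 + 2\E_i\|b - \E_i[b]\|^2$ by the inequality $\|u+v\|^2 \leq 2\|u\|^2 + 2\|v\|^2$. Finally, since $\E_i\|b - \E_i[b]\|^2 = \E_i\|b\|^2 - \|\E_i[b]\|^2 \leq \E_i\|b\|^2$, we conclude $\E_{\mathcal{F}_{k-1}}\|\mathcal{G}_{\text{VR}}(x_k)\|^2 \leq 2\E_{\mathcal{F}_{k-1}}\|\nabla f_{i}(x_k) - \nabla f_{i}(x^\star)\|^2 + 2\E_{\mathcal{F}_{k-1}}\|\nabla f_{i}(\tilde x_k) - \nabla f_{i}(x^\star)\|^2$, which is the claimed bound.

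There is no serious obstacle here; the only point requiring a little care is the bookkeeping of which expectations are conditional and the fact that the "variance reduction" step — subtracting the mean of $b$ — is precisely what lets us replace a raw second moment by a (smaller) centered second moment. The structure follows~\cite{johnson2013accelerating} and~\cite{allen2016improved} closely, so I would simply present the two-line expansion and the two applications of $\|u+v\|^2 \le 2\|u\|^2 + 2\|v\|^2$ and $\mathrm{Var} \le$ second moment, noting that all gradients appearing are well-defined and finite under \textbf{(H)}.
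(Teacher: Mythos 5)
Your proposal is correct and follows essentially the same route as the paper: the first inequality comes from dropping the (positive-semidefinite) mean term in the covariance, and the second from the decomposition $\mathcal{G}_{\text{VR}} = \bigl(\nabla f_i(x_k)-\nabla f_i(x^\star)\bigr) - \bigl(\nabla f_i(\tilde x_k)-\nabla f_i(x^\star) - \E_{\mathcal{F}_{k-1}}[\nabla f_i(\tilde x_k)-\nabla f_i(x^\star)]\bigr)$ (using $\nabla f(x^\star)=0$), followed by $\|u+v\|^2\le 2\|u\|^2+2\|v\|^2$ and the variance-second-moment bound. The bookkeeping of conditional expectations is handled correctly, so nothing is missing.
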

\begin{proof}
Let us define $\epsilon_{\text{VR}}(x_k,\tilde x_k) := \mathcal{G}_{\text{VR}}(x_k,\tilde x_k)- \nabla f(x_k)$. First notice that, $\epsilon_{\text{VR}}(x_k,\tilde x_k)$ has zero mean, and 
\begin{multline}
 \tr\left(\Sigma_{\text{VR}}(x_k,\tilde{x}_k)\right) = \tr\left(\E_{\mathcal{F}_{k-1}}[\epsilon_{\text{VR}}(x_k,\tilde x_k)\epsilon_{\text{VR}}(x_k,\tilde x_k)^T]\right) =\\
        =\E_{\mathcal{F}_{k-1}}\left[\tr (\epsilon_{\text{VR}}(x_k,\tilde x_k)\epsilon_{\text{VR}}(x_k,\tilde x_k)^T\right]\\
        =\E_{\mathcal{F}_{k-1}}\left[\tr (\epsilon_{\text{VR}}(x_k,\tilde x_k)^T\epsilon_{\text{VR}}(x_k,\tilde x_k))\right] =\E_{\mathcal{F}_{k-1}} \|\epsilon_{\text{VR}}(x_k,\tilde x_k)\|^2,
\end{multline}

where the second equality is given by the linearity of the trace and third equality by the cyclic property of the trace. 
Notice that, since for any random variable $\zeta$ we have $\E_{\mathcal{F}_{k-1}}[\|\zeta-\E_{\mathcal{F}_{k-1}} \zeta\|^2]= \E_{\mathcal{F}_{k-1}}[\|\zeta\|^2]-\|\E_{\mathcal{F}_{k-1}} [\zeta]\|^2\le \E_{\mathcal{F}_{k-1}}[\|\zeta\|^2]$, then
$$\E_{\mathcal{F}_{k-1}} [\|\epsilon_{\text{VR}}(x_k,\tilde x_k)\|^2] \le \E_{\mathcal{F}_{k-1}} [\|\mathcal{G}_{\text{VR}}(x_k,\tilde x_k)\|^2].$$

Hence, we found that $\tr\left(\Sigma_{\text{VR}}(x_k,\tilde{x}_k)\right)\le \E_{\mathcal{F}_{k-1}}[\|\mathcal{G}_{\text{VR}}(x)\|^2]$. We further bound this term with a simple calculation

\begin{equation}
    \begin{split}
        \E_{\mathcal{F}_{k-1}} \|\mathcal{G}_{\text{VR}}(x)\|^2 &= \E_{\mathcal{F}_{k-1}}\|\nabla f_{i}(x_k)- \nabla f_{i}(\tilde{x}_k)+\nabla f(\tilde{x}_k)\|^2\\
        &=\E_{\mathcal{F}_{k-1}}\|\nabla f_{i}(x_k)- \nabla f_{i}(x^\star) - [\nabla f_{i}(\tilde{x}_k)-\nabla f_{i}(x^\star) -\nabla f(\tilde{x}_k)]\|^2\\
        &\le 2\E_{\mathcal{F}_{k-1}}\|\nabla f_{i}(x_k)- \nabla f_{i}(x^\star)\|^2 + 2\E\|\nabla f_{i}(\tilde{x}_k)-\nabla f_{i}(x^\star) -\nabla f(\tilde{x}_k)\|^2\\
        &= 2\E_{\mathcal{F}_{k-1}}\|\nabla f_{i}(x_k)- \nabla f_{i}(x^\star)\|^2\\
        & \ \ \ + 2\E_{\mathcal{F}_{k-1}}\|\nabla f_{i}(\tilde{x}_k)-\nabla f_{i}(x^\star) - \E_{\mathcal{F}_{k-1}} [\nabla f(\tilde{x}_k)-\nabla f_{i}(x^\star)]\|^2\\
        &\le 2\E_{\mathcal{F}_{k-1}}\|\nabla f_{i}(x_k)- \nabla f_{i}(x^\star)\|^2 + 2\E_{\mathcal{F}_{k-1}}\|\nabla f_{i}(\tilde{x}_k)-\nabla f_{i}(x^\star))\|^2,\\  
    \end{split}
\end{equation}
where in the first inequality we used the parallelogram law; in the third equality we used $\E_{\mathcal{F}_{k-1}}[\nabla f_i(x^\star)] = 0$ and in the second inequality we used again the fact that for any random variable $\zeta$, $\E_{\mathcal{F}_{k-1}} \|\zeta-\E_{\mathcal{F}_{k-1}} \zeta\|^2= \E_{\mathcal{F}_{k-1}}\|\zeta\|^2-\|\E_{\mathcal{F}_{k-1}} \zeta\|^2\le \E_{\mathcal{F}_{k-1}}\|\zeta\|^2.$
\end{proof}

Using the previous lemma, we can derive the following result.

\begin{corollary}
Assume \textbf{(H)}. Then
\begin{multline*}
    \tr\left(\Sigma_{\text{VR}}(x_k,\tilde{x}_k)\right)\le\E_{\mathcal{F}_{k-1}}\left[\|\mathcal{G}_{\text{VR}}(x_k)\|^2\right] \\\le 2L^2 \E_{\mathcal{F}_{k-1}}\left[\|x_k-x^\star\|^2\right] + 2L^2 \E_{\mathcal{F}_{k-1}}\left[\|\tilde{x}_k-x^\star\|^2\right].
\end{multline*}
\label{cor:STOC_SVRG_var_QG}
\end{corollary}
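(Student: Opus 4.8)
The plan is to read the result directly off Lemma~\ref{lemma:VR_trace_lemma} together with the $L$-smoothness of each $f_i$ guaranteed by \textbf{(H)}. The left inequality, $\tr(\Sigma_{\text{VR}}(x_k,\tilde x_k)) \le \E_{\mathcal{F}_{k-1}}[\|\mathcal{G}_{\text{VR}}(x_k)\|^2]$, has already been established in Lemma~\ref{lemma:VR_trace_lemma}, so no further work is needed for that part; it remains only to control the right-hand side of that lemma, namely
$$2\,\E_{\mathcal{F}_{k-1}}\|\nabla f_{i}(x_k) - \nabla f_{i}(x^\star)\|^2 + 2\,\E_{\mathcal{F}_{k-1}}\|\nabla f_{i}(\tilde x_k) - \nabla f_{i}(x^\star)\|^2.$$

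For this I would invoke $L$-smoothness: under \textbf{(H)} each $f_i$ is $L$-smooth, hence $\|\nabla f_i(u) - \nabla f_i(v)\| \le L\|u-v\|$ for all $u,v\in\R^d$ and every $i\in[N]$, and squaring gives the pointwise-in-$i$ bound $\|\nabla f_i(u) - \nabla f_i(v)\|^2 \le L^2\|u-v\|^2$. Applying this with $(u,v) = (x_k,x^\star)$ in the first term and $(u,v) = (\tilde x_k,x^\star)$ in the second, and noting the estimate holds for every realization of the random sampling index $i$, I can pass it through the conditional expectation $\E_{\mathcal{F}_{k-1}}$ to obtain
$$2\,\E_{\mathcal{F}_{k-1}}\|\nabla f_i(x_k) - \nabla f_i(x^\star)\|^2 \le 2L^2\,\E_{\mathcal{F}_{k-1}}\|x_k - x^\star\|^2,$$
and likewise for the $\tilde x_k$ term; summing the two chains the second inequality of the statement onto the bound from Lemma~\ref{lemma:VR_trace_lemma}.

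There is essentially no obstacle here: the claim is a one-line corollary of the preceding lemma. The only points worth a remark are (i) that $i$ is the uniform sampling index, so it is precisely the uniform-in-$i$ smoothness estimate that lets the bound survive the conditional expectation, and (ii) that although $x_k$ and $\tilde x_k$ are $\mathcal{F}_{k-1}$-measurable --- so the outer expectations on the right could in fact be dropped --- we keep $\E_{\mathcal{F}_{k-1}}$ for notational consistency with the downstream SVRG analysis, where these quantities are later averaged over the random jumps; retaining an outer expectation of an $\mathcal{F}_{k-1}$-measurable quantity changes nothing.
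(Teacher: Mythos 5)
Your proposal is correct and follows exactly the paper's own argument: the first inequality is Lemma~\ref{lemma:VR_trace_lemma}, and the second follows by applying the $L$-smoothness of each $f_i$ (uniformly in the sampling index $i$) to the two gradient-difference terms before taking the conditional expectation. The additional remarks about measurability are accurate but not needed.
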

\begin{proof} Using first smoothness we have, starting from Lemma~\ref{lemma:VR_trace_lemma}

\begin{equation*}
    \begin{split}
        &\tr\left(\Sigma_{\text{VR}}(x_k,\tilde{x}_k)\right)\le\E_{\mathcal{F}_{k-1}}\left[\|\mathcal{G}_{\text{VR}}(x_k)\|^2\right)\\ &\le 2\E_{\mathcal{F}_{k-1}}\|\nabla f_{i}(x_k)- \nabla f_{i}(x^\star)\|^2 + 2\E_{\mathcal{F}_{k-1}}\|\nabla f_{i}(\tilde{x}_k)-\nabla f_{i}(x^\star))\|^2\\  
        &\le 2L^2 \E_{\mathcal{F}_{k-1}}\left[\|x_k-x^\star\|^2\right] + 2L^2 \E_{\mathcal{F}_{k-1}}\left[\|\tilde{x}_k-x^\star\|^2\right].\\  
    \end{split}
\end{equation*}
\end{proof}

Next, we provide a convergence rate for \textit{Option II}.

\subsubsection{Convergence rate under Option II}

We consider, the case $b(t)=\psi(t)=1$. Therefore, VR-PGF reads

$$dX(t) = -\nabla f(X(t)) \ dt + \sqrt{h} \ \sigma_{\text{VR}}(X(t),X(t-\xi(t))) \ dB(t).$$

As for standard SVRG with Option II, every $\T$ seconds we perform a jump.

\begin{frm-thm}[Restated Thm.~\ref{prop:CONV_VR_RSI}]
Assume \textbf{(H)}, \textbf{(H\begin{tiny}{RSI}\end{tiny})} and choose $\xi(t) = t-\sum_{j=1}^\infty\delta(t-j\T)$~(sawtooth wave), where $\delta(\cdot)$ is the Dirac delta. Let $\{X(t)\}_{t\ge0}$ be the solution to \underline{VR-PGF} with additional jumps at times $(j\T)_{j\in\N}$: we pick $X(j\T+\T)$ uniformly in $\{X(s)\}_{j\T\le s< (j+1)\T}$. Then,
\vspace{-2mm}
$$\E[\|X(j\T)-x^\star\|^2]=\left(\frac{2 h L^2\T + 1}{\T(\mu-2hL^2)}\right)^j \|x_0-x^*\|^2.$$
\end{frm-thm}
\begin{proof}
Define the energy $\EE\in\mathcal{C}^2(\R^d, \R_{+})$ such that $\EE(x) := \frac{1}{2}||x-x^\star||^2$. First, we find a bound on the infinitesimal diffusion generator of the stochastic process $\{\EE(X(s))\}_{j\T\le s \le (j+1)\T}$:

\begin{equation*}
    \begin{split}
        \mathscr{A}\EE(X(s)) &= -\langle\nabla f(X(s)),X(s)-x^\star\rangle ds + \frac{h}{2} \tr(\Sigma_{\text{VR}}(X(s),X(s-\xi(s)))) ds\\
        &\le -\frac{\mu}{2}\|X(s)-x^\star\|^2ds + hL^2\left(\|X(s)-x^\star\|^2 + \|X(s-\xi(s))-x^\star\|^2\right)ds\\ 
    \end{split}
\end{equation*}
where in the first inequality we used Lemma~\ref{lemma:VR_trace_lemma} and the RSI. Using Dynkin's formula (Eq.~\eqref{eq:dynkin}), since $X(s-\xi(s)) = X(j\T)$ for $s\in[j\T,j\T+\T]$ by our choice of $\xi(\cdot)$,
\begin{multline*}
   \frac{1}{2}\E\left[\|X(j\T+\T)-x^\star\|^2\right]-\frac{1}{2}\E\left[\|X(j\T)-x^\star\|^2\right]\\\le-\frac{\T}{2}(\mu-2hL^2)\int_{j\T}^{j\T+\T}\E[\|X(s)-x^\star\|^2]\frac{ds}{\T} + h L^2\T\E[\|X(j\T)-x^\star\|^2],
\end{multline*}

which gives

$$\int_{j\T}^{j\T+\T}\E[\|X(s)-x^\star\|^2]\frac{ds}{\T} \le \frac{2 h L^2\T + 1}{\T(\mu-2hL^2)}\E[\|X(j\T)-x^\star\|^2].$$

By redefining (\textit{jumping to}) $X(j\T+\T)$ uniformly from $\{X(s)\}_{j\T\le s\le j\T+\T}$, $\E[\|X(j\T+\T)-x^\star\|^2] = \int_{j\T}^{j\T+\T}\E[\|X(s)-x^\star\|^2]\frac{ds}{\T}$ and therefore, for all $j\in\N$

$$\E[\|X(j\T+\T)-x^\star\|^2]\le\frac{2 h L^2\T + 1}{\T(\mu-2hL^2)}\E[\|X(j\T)-x^\star\|^2].$$

\end{proof}


\section{Analysis in discrete-time}
\label{sec:proofs_dt}
For ease of consultation of this appendix, we briefly describe here again our setting: $\{f_i\}_{i=1}^N$ is a collection of $L$-smooth\footnote{As already mentioned in the main paper, we say a function $f\in\mathcal{C}^1(\R^d,\R^m)$ is $L$-smooth if, for all $x,y\in \R^d$, we have $\|\nabla f(x)-\nabla f(y)\|\le L\|x-y\|$} functions s.t. $f_i : \mathbb{R}^d \to \mathbb{R}$ for any $i \in \{1,\dots,N\}$  and $
    f(\cdot):= \frac{1}{N}\sum_{i = 1}^N f_i(\cdot)$.
 Trivially, $f(\cdot)$ is also $L$-smooth; our task is to find a minimizer $x^\star = \argmin_{x\in\R^d} f(x)$. 
 
 \textbf{(H-)} \ \ \ \ \ \ \ \ Each $f_i(\cdot)$ is $L$-smooth.

 Mini-Batch SGD builds a sequence of estimates of the solution $x^\star$ in a recursive way, using the stochastic gradient estimate $\mathcal{G}_{\text{MB}}$:
\begin{equation*}
\tag{SGD}
x_{k+1} = x_{k} - \eta_k \mathcal{G}\left(\{x_i\}_{0\le i\le k},k\right),
\end{equation*}
where $(\eta_k)_{k\ge0}$ is a non-increasing deterministic sequence of positive numbers called the \textit{learning rate sequence}. We define, as in Sec.~\ref{sec:models},
\begin{itemize}
    \item $h:=\eta_0$.
    \item \textit{adjustment factor} sequence $(\psi_k)_{k\ge 0}$ s.t. for all $k\ge 0$, $\psi_k=\eta_k/ h$.
    \item $\{\mathcal{F}_k\}_{k\ge 0}$ the natural filtration induced by the stochastic process $\{x_k\}_{k\ge 0}$.
    \item $\E$ the expectation operator over all the information $\mathcal{F}_{\infty}$.
    \item $\E_{\mathcal{F}_{k}}$ the conditional expectation given the information at step $k$.
\end{itemize}

We also report from the main paper some assumptions we might use

\textbf{(H\begin{tiny}{WQC}\end{tiny})} \ \ \  $f(\cdot)$ is $\mathcal{C}^1$ and exists $\tau>0$ and $x^\star$ s.t. $\langle\nabla f(x),x-x^{\star}\rangle \ge \tau(f(x)-f(x^{\star}))$ for all $x\in\R^d$.

\textbf{(H\begin{tiny}{P\L{}}\end{tiny})} \ \ \ \ \ \ $f(\cdot)$ is $\mathcal{C}^1$ and there exists $\mu>0$ s.t. $\|\nabla f(x)\|^2\ge2\mu(f(x)-f(x^\star))$ for all $x\in\R^d$.

\textbf{(H\begin{tiny}{RSI}\end{tiny})} \ \ \ \ \ $f(\cdot)$ is $\mathcal{C}^1$ and there exists $\mu>0$ s.t. $\langle\nabla f(x),x-x^\star\rangle\ge\frac{\mu}{2}\|x-x^\star\|^2$ for all $x\in\R^d$.

\subsection{Analysis of MB-SGD}
\subsubsection{Non-asymptotic rates}
\label{sec:mb_sgd_nonasy}
In Sec.~\ref{sec:models}, we defined $\Sigma_{\text{MB}}(x)$ to be the one-sample conditional covariance matrix. So that $\var_{\F_{k-1}}[\mathcal{G}_{\text{MB}}(x_k,k)] = \frac{\Sigma_{\text{MB}}(x_k)}{b_k}$, where $b_k$ is the mini-batch size. As commonly done in the literature~\cite{ghadimi2013stochastic} and to match the continuous time analysis, we make the following assumption.

\textbf{(H$\boldsymbol{\sigma}$)} $\sigma^2_*:=\sup_{x\in\R^d}\|\sigma_{\text{MB}}(x)\sigma_{\text{MB}}(x)^T\|_S<\infty$, where $\| \cdot \|_S$ denotes the spectral norm.

Last, we define --- to match existing proofs of related results~\cite{bottou2018optimization,ghadimi2013stochastic,moulines2011non}, $\epsilon_k:=\mathcal{G}_{\text{MB}}(x_k,k)-\nabla f(x_k)$. It follows that $\E[\|\epsilon_k\|^2] = \frac{d \sigma^2_*}{b_k}.$

Moreover for $k \ge 0$ we define $\varphi_{k+1}= \sum_{i=0}^k \psi_i$. We are now ready to show the non-asymptotic results. But first, we need two (well-known) classic lemmas.

\begin{lemma}
Assume  \textbf{(H-)}, then
$$\E [f(x_{k+1})-f(x_k)] \le \left(\frac{L\eta_k^2}{2}-\eta_k\right) \E\left[\|\nabla f (x_k)\|^2 \right]+\frac{L \ d \ \sigma^2_* \ \eta_k^2 }{2 \ b_k}.$$
\label{lemma:discrete_time_1}
\end{lemma}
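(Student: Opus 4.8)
The plan is to start from the $L$-smoothness of $f(\cdot)$ (which follows from \textbf{(H-)}, since $f$ is an average of $L$-smooth functions) and apply the standard descent inequality along the SGD update $x_{k+1} = x_k - \eta_k \mathcal{G}_{\text{MB}}(x_k,k)$. Writing $\mathcal{G}_{\text{MB}}(x_k,k) = \nabla f(x_k) + \epsilon_k$ with $\epsilon_k$ the mean-zero noise term, the descent lemma gives
$$f(x_{k+1}) \le f(x_k) + \langle \nabla f(x_k), x_{k+1}-x_k\rangle + \frac{L}{2}\|x_{k+1}-x_k\|^2 = f(x_k) - \eta_k\langle\nabla f(x_k), \nabla f(x_k)+\epsilon_k\rangle + \frac{L\eta_k^2}{2}\|\nabla f(x_k)+\epsilon_k\|^2.$$
Next I would take the conditional expectation $\E_{\mathcal{F}_{k-1}}[\cdot]$ (taking out the randomness in $x_k$): the cross term $\langle\nabla f(x_k),\epsilon_k\rangle$ vanishes by unbiasedness of $\mathcal{G}_{\text{MB}}$, and $\E_{\mathcal{F}_{k-1}}\|\nabla f(x_k)+\epsilon_k\|^2 = \|\nabla f(x_k)\|^2 + \E_{\mathcal{F}_{k-1}}\|\epsilon_k\|^2$ by the bias-variance split (the remaining cross term again vanishes).

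The only remaining ingredient is the bound $\E_{\mathcal{F}_{k-1}}\|\epsilon_k\|^2 \le \frac{d\sigma_*^2}{b_k}$: indeed $\E_{\mathcal{F}_{k-1}}\|\epsilon_k\|^2 = \tr\big(\var_{\mathcal{F}_{k-1}}[\mathcal{G}_{\text{MB}}(x_k,k)]\big) = \frac{1}{b_k}\tr(\Sigma_{\text{MB}}(x_k)) = \frac{1}{b_k}\tr\big(\sigma_{\text{MB}}(x_k)\sigma_{\text{MB}}(x_k)^T\big) \le \frac{d\sigma_*^2}{b_k}$, where the last step is exactly the first inequality of Lemma~\ref{lemma:CONV_trace_star_MB} together with \textbf{(H$\boldsymbol{\sigma}$)}. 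Substituting this in yields
$$\E_{\mathcal{F}_{k-1}}[f(x_{k+1})] - f(x_k) \le -\eta_k\|\nabla f(x_k)\|^2 + \frac{L\eta_k^2}{2}\Big(\|\nabla f(x_k)\|^2 + \frac{d\sigma_*^2}{b_k}\Big) = \Big(\frac{L\eta_k^2}{2}-\eta_k\Big)\|\nabla f(x_k)\|^2 + \frac{L\,d\,\sigma_*^2\,\eta_k^2}{2\,b_k}.$$
Finally I take the total expectation $\E[\cdot]$ over all of $\mathcal{F}_\infty$ and use the tower property to obtain the claimed inequality.

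There is no real obstacle here — this is the textbook one-step descent estimate for SGD. The only point requiring a small amount of care is the passage from the variance of the vector-valued gradient estimator to the scalar bound $\frac{d\sigma_*^2}{b_k}$, i.e. correctly relating $\E_{\mathcal{F}_{k-1}}\|\epsilon_k\|^2$ to $\tr(\Sigma_{\text{MB}}(x_k)/b_k)$ and then invoking Lemma~\ref{lemma:CONV_trace_star_MB}; this is also the place where assumption \textbf{(H$\boldsymbol{\sigma}$)} enters. Everything else is elementary algebra and linearity of expectation.
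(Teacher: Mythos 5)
Your proposal is correct and follows essentially the same route as the paper: the $L$-smoothness descent inequality, the decomposition $\mathcal{G}_{\text{MB}}(x_k,k)=\nabla f(x_k)+\epsilon_k$, vanishing of the cross term by unbiasedness, and the bound $\E_{\mathcal{F}_{k-1}}\|\epsilon_k\|^2\le d\sigma_*^2/b_k$ via the trace of the covariance (the paper conditions via Fubini rather than taking $\E_{\mathcal{F}_{k-1}}$ first, which is immaterial). You are also right that \textbf{(H$\boldsymbol{\sigma}$)} is implicitly needed for the noise bound even though the lemma statement only lists \textbf{(H-)}.
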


\begin{proof}
Thanks to the $L$-smoothness assumption, we have the classic result (see e.g.~\cite{nesterov2018lectures})
\begin{equation}
f(x_{k+1})-f(x_k) \le \langle\nabla f (x_k), x_{k+1}-x_k \rangle +\frac{L}{2} \|x_{k+1}-x_k\|^2 \quad \quad a.s.
\label{eq:smooth_upper_nesterov}
\end{equation}
After plugging the definition of mini-batch SGD, taking the expectation and using Fubini's theorem,

\begin{align*}
&\E [f(x_{k+1})-f(x_k)]\\ &\le -\eta_k \E\left[\E_{\mathcal{F}_{k-1}}[\langle\nabla f (x_k), \mathcal{G}_{\text{MB}}(x_k,k)\rangle]\right] +\frac{L\eta_k^2}{2} \E[\|\mathcal{G}_{\text{MB}}(x_k,k)\|^2]\\
&\le -\eta_k \E[\langle\nabla f (x_k), \E_{\mathcal{F}_{k-1}}[\mathcal{G}_{\text{MB}}(x_k,k)]\rangle] +\frac{L\eta_k^2}{2} \E[\|\nabla f (x_k)+\epsilon_k\|^2]\\
&\le -\eta_k \E\left[\|\nabla f (x_k)\|^2\right] +\frac{L\eta_k^2}{2} \E\left[\|\nabla f (x_k)\|^2+\|\epsilon_k\|^2 + 2\langle\epsilon_k,\nabla f (x_k) \rangle\right]\\
&\le \left(\frac{L\eta_k^2}{2} -\eta_k\right) \E\left[\|\nabla f (x_k)\|^2\right]+\frac{L\eta_k^2}{2}\E_{\mathcal{F}_{k-1}}\left[\|\epsilon_k\|^2\right] +L\eta_k^2\E\left[\langle\E_{\mathcal{F}_{k-1}}[\epsilon_k],\nabla f (x_k) \rangle\right]\\
&\le \left(\frac{L\eta_k^2}{2}-\eta_k\right)\E\left[\|\nabla f (x_k)\|^2\right] +\frac{L d \sigma^2_* \eta_k^2 }{2 b_k}.
\end{align*}
\end{proof}
\begin{lemma} Assume \textbf{(H-)}, then
$$\E\left[\|\nabla f(x_k)\|^2\right]\le 2L \E[f(x_k)-f(x^\star)].$$
\label{lemma:discrete_time_2}
\end{lemma}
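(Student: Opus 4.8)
The plan is to prove the pointwise inequality $\|\nabla f(x)\|^2 \le 2L\,(f(x)-f(x^\star))$ for every $x\in\R^d$, and then apply it at $x=x_k$ (which holds almost surely, pathwise) before taking expectations. No convexity or any structure on $f$ beyond $L$-smoothness and the existence of a global minimizer $x^\star$ is needed.

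First I would invoke the quadratic upper bound implied by $L$-smoothness — the same inequality~\eqref{eq:smooth_upper_nesterov} already used in the proof of Lemma~\ref{lemma:discrete_time_1}: for all $x,y\in\R^d$,
$$f(y)\le f(x)+\langle\nabla f(x),y-x\rangle+\frac{L}{2}\|y-x\|^2.$$
Second, I would specialize to $y=x-\tfrac{1}{L}\nabla f(x)$, the minimizer of the right-hand side, obtaining
$$f\!\left(x-\tfrac{1}{L}\nabla f(x)\right)\le f(x)-\frac{1}{L}\|\nabla f(x)\|^2+\frac{1}{2L}\|\nabla f(x)\|^2 = f(x)-\frac{1}{2L}\|\nabla f(x)\|^2.$$
Third, since $x^\star$ is a global minimizer of $f$, the left-hand side is at least $f(x^\star)$, so $f(x^\star)\le f(x)-\frac{1}{2L}\|\nabla f(x)\|^2$, which rearranges to $\|\nabla f(x)\|^2\le 2L\,(f(x)-f(x^\star))$.

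Finally I would set $x=x_k$; the inequality holds pointwise for each realization of the random variable $x_k$, and since $f$ is bounded below both sides are integrable, so monotonicity of expectation gives $\E[\|\nabla f(x_k)\|^2]\le 2L\,\E[f(x_k)-f(x^\star)]$, as claimed. There is essentially no obstacle here: the only mildly delicate point is recognizing that the deterministic pointwise bound transfers to the random iterate simply by applying it along sample paths and then taking expectations.
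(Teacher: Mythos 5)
Your proof is correct and follows essentially the same route as the paper's: both evaluate the smoothness upper bound at the test point $x-\tfrac{1}{L}\nabla f(x)$ and use that $x^\star$ is a global minimizer, the only cosmetic difference being that you argue pointwise before taking expectations while the paper works directly in expectation (invoking Lemma~\ref{lemma:discrete_time_1} with $\sigma_*^2=0$).
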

\begin{proof}
We have that
$$\E[f(x^\star)-f(x_k)]\le \E\left[ f\left(x_k-\frac{1}{L} \nabla f(x_k)\right)-f(x_k)\right]\le -\frac{1}{2L}\E\left[\|\nabla f (x_k)\|^2\right],$$
where the first inequality holds since $x^\star$ is the minimum and the last inequality uses Lemma~\ref{lemma:discrete_time_1} in the special case $\sigma^2_*=0$.
\end{proof}

The following theorem (statement and proof technique) has to be compared to Thm.~\ref{prop:CONV_GF_smooth} for MB-PGF.
 
 \begin{frm-thm-app}
Assume \textbf{(H-)}, \textbf{(H$\boldsymbol{\sigma}$)}. For $k \ge 0$ let $\tilde k\in[0,k]$ be a random index picked with probability $\psi_{j}/{\varphi_{j+1}}$ for $j \in\{0,\dots,k\}$ (and $0$ otherwise). If $h\le \frac{1}{L}$, then we have:
 $$\E\left[ \|\nabla f (x_{\tilde k})\|^2\right]\le \frac{2 \ (f(x_0)-f(x^\star))}{(h  \varphi_{k+1})}+\frac{h\  d \ L\ \sigma^2_*}{(h \varphi_{k+1})}\sum_{i=0}^k \frac{\psi_i^2}{b_i} h.$$
 \label{prop:CONV_GD_smooth}
 \end{frm-thm-app}
 
 \begin{proof}
  Consider the continuous-time inspired (see Thm.~\ref{prop:CONV_GF_smooth}) Lyapunov function $\EE(k):=f(x_k)-f(x^\star)$. We have, directly from Lemma~\ref{lemma:discrete_time_1} and using the fact that $\eta_k\le\frac{1}{L}$ (hence $\frac{L\eta_k^2}{2}-\eta_k \le -\frac{\eta_k}{2}$ ),
 \begin{align*}
\E[\EE(k+1)-\EE(k)] &= \E [f(x_{k+1})-f(x_k)] \\
&\le \left(\frac{L\eta_k^2}{2}-\eta_k\right) \E\left[\|\nabla f (x_k)\|^2 \right]+\frac{L d \sigma^2_* \eta_k^2 }{2b_k}\\
&\le -\frac{\eta_k}{2} \E\left[\|\nabla f (x_k)\|^2 \right]+\frac{L d \sigma^2_* \eta_k^2 }{2b_k}
 \end{align*}
 
 Finally, by linearity of integration,

\begin{align}
\E [\EE(k+1)-\EE(0)] &= \E \left[\sum_{i=0}^k \EE(i+1)-\EE(i)\right] \nonumber\\
&=  \sum_{i=0}^k\E[ \EE(i+1)-\EE(i)] \nonumber\\
&= -\frac{1}{2} \sum_{i=0}^k \eta_i  \E\left[\|\nabla f (x_i)\|^2 \right]+\frac{L \ d \ \sigma^2_*}{2}\sum_{i=0}^k  \frac{\eta_i^2}{b_i}\nonumber \\
&= -\frac{h}{2} \E\left[\sum_{i=0}^k \psi_i  \|\nabla f (x_i)\|^2 \right]+\frac{L \ d \ h \ \sigma^2_* }{2}\sum_{i=0}^k 
\frac{\psi_i^2}{b_i}
\label{eq:before_trick_discrete}
\end{align}
Next, notice that, since $\sum_{i=0}^k\frac{\psi_i}{\varphi_{k+1}} = 1$, the function $i\mapsto \frac{\psi_i}{\varphi_{k+1}}$ defines a probability distribution. Let $\tilde k\in\{0,\dots,k\}$ have this distribution; then conditioning on all the past iterations $\{x_0,\dots, x_k\}$ and using the law of the unconscious statistician 
$$\E_{\mathcal{F}_{k-1}}[\|\nabla f(x_{\tilde k})\|^2] = \frac{1}{\varphi_{k+1}}\sum_{i=0}^k\psi(i)\|\nabla f(x_i)\|^2 ds,$$

which, once plugged in Eq.~\eqref{eq:before_trick_discrete}, gives $$h\varphi_{k+1}\E [\|\nabla f(x_{\tilde k})\|^2]\le 2\EE(0) + \frac{L \ d \ h^2 \ \sigma^2_* }{2}\sum_{i=0}^k 
\frac{\psi_i^2}{b_i}.$$
The proof ends by using the definition of $\EE$.
 \end{proof}
 The following proposition has to be compared to Thm.~\ref{prop:CONV_GF_WQC} for MB-PGF.
 
 \begin{frm-thm-app}
 Assume \textbf{(H-)}, \textbf{(H$\boldsymbol{\sigma}$)}, \textbf{(H\begin{tiny}{WQC}\end{tiny})} and let $\tilde k$ be defined as in Thm.~\ref{prop:CONV_GD_smooth}. If $0< h\le \frac{\tau}{2L}$, then we have:
 $$\E\left[f(x_{\tilde k})-f(x^\star)]\right]\le \frac{\|x_0-x^\star\|^2}{\tau \ (h \varphi_{k+1}) }+\frac{d \ h \ \sigma^2_*}{\tau \ (h \varphi_{k+1}) }\sum_{i=0}^k \frac{\psi_i^2}{b_i}h.$$
 
 Moreover, if $0\le h \le \left(\frac{2}{L}-\frac{1}{\tau L}\right)$, then for all $k\ge 0$ we have:
 
 $$\E\left[f(x_{k+1})-f(x^\star)\right]\le \frac{\|x_0-x^\star\|^2}{2 \ \tau \ (h\varphi_{k+1})} + \frac{h \ d \ \sigma^2_*}{2 \ \tau \ (h\varphi_{k+1})} \sum_{i=0}^{k} (1+\tau \varphi_{i+1}L) \frac{\psi_i^2}{b_i}h .$$
 \label{prop:CONV_GD_WQC}
 \end{frm-thm-app}
 
 \begin{proof} We prove the two rates separately.
 
 \underline{Proof of the first formula : }
 consider the continuous-time inspired (see Thm.~\ref{prop:CONV_GF_WQC}) Lyapunov function $\EE(k):=\frac{1}{2}\|x_k -x^\star\|^2$. We have
 \begin{align*}
 &\E [\EE(k+1)-\EE(k)]=\\ &= \frac{1}{2}\E \left[\|x_{k} -x^\star-\eta_k \mathcal{G}_{\text{MB}}(x_k,k)\|^2\right] -  \frac{1}{2}\E\left[\|x_{k} -x^\star\|^2\right]\\
 &= -\eta_k\E\left[\E_{\mathcal{F}_{k-1}}\left[\left\langle\mathcal{G}_{\text{MB}}(x_k,k), x_k-x^\star \right\rangle\right]\right] +\frac{\eta_k^2}{2}\E \left[\|\mathcal{G}_{\text{MB}}(x_k,k)\|^2\right] \\
  &= -\eta_k\E\left[\left\langle\E_{\mathcal{F}_{k-1}}[\mathcal{G}_{\text{MB}}(x_k,k)], x_k-x^\star \right\rangle\right] + \frac{\eta_k^2}{2} \E\left[\|\nabla f(x_k)\|^2\right]+\frac{\eta_k^2}{2}\E_{\mathcal{F}_{k-1}} \left[\|\epsilon_k\|^2\right] \\
  &= -\eta_k\E\left[\left\langle\nabla f(x_k), x_k-x^\star \right\rangle\right] + \frac{\eta_k^2}{2} \E\left[\|\nabla f(x_k)\|^2\right]+\frac{d \eta_k^2 \sigma^2_* }{2 b_k}, 
   \end{align*}
 where in the second equality we used Fubini's theorem. We proceed using weak-quasi-convexity and Lemma~\ref{lemma:discrete_time_2}:

\begin{align*}
  \E [\EE(k+1)-\EE(k)]&\le -\eta_k \tau \E\left[ f(x_k)-f(x^\star)\right] + \frac{\eta_k^2}{2} \E\left[\|\nabla f(x_k)\|^2\right]+\frac{d \eta_k^2 \sigma^2_* }{2 b_k} \\  
    &\le -\eta_k \tau \E\left[ f(x_k)-f(x^\star)\right] + \eta_k^2L \E[f(x_k)-f(x^\star)] +\frac{d \eta_k^2 \sigma^2_* }{2 b_k} \\  
        &\le (L\eta_k^2-\tau\eta_k)  \E\left[ f(x_k)-f(x^\star)\right] +\frac{d \eta_k^2 \sigma^2_*}{2 b_k}.\\  
 \end{align*}
 Next, using the fact that $-\tau \eta_k + L\eta_k^2 \le -\tau \frac{\eta_k}{2}$ for $\eta_k\le \frac{\tau}{2L}$ we get
 \begin{equation}
 \E [\EE(k+1)-\EE(k)] \le -\frac{\eta_k\tau}{2}\E[f(x_k)-f(x^\star)]+\frac{\eta_k^2\sigma^2_* d}{2 b_k}
 \end{equation}
 
 Finally, by linearity of integration,

\begin{align*}
\E [\EE(k+1)-\EE(0)] &= \E \left[\sum_{i=0}^k \EE(i+1)-\EE(i)\right] \\
&=  \sum_{i=0}^k\E[ \EE(i+1)-\EE(i)] \\
&\le -\frac{\tau}{2} \sum_{i=0}^k\eta_i\E \left[(f(x_i)-f(x^\star))\right]+\frac{d \sigma^2_*}{2}\sum_{i=0}^k\frac{\eta_i^2}{b_i} \\
&= -\frac{\tau}{2} \E\left[\sum_{i=0}^k\eta_i(f(x_i)-f(x^\star))\right]+\frac{d \sigma^2_*}{2}\sum_{i=0}^k\frac{\eta_i^2}{b_i}.\\
\end{align*}

Proceeding again as in Thm.~\ref{prop:CONV_GD_smooth}, we get the desired result.

\underline{Proof of the second formula : }
consider the continuous-time inspired (see Thm.~\ref{prop:CONV_GF_WQC}) Lyapunov function
$$\EE(k) := \tau h \varphi_{k} (f(x_k)-f(x^\star))+\frac{1}{2} \|x_k-x^\star\|^2.$$
Then, with probability one,
\begin{align*}
&\EE(k+1)-\EE(k)\\
&=\tau h\varphi_{k+1} (f(x_{k+1})-f(x^\star))+\frac{1}{2} \|x_{k+1}-x^\star\|^2-\tau h\varphi_k (f(x_k)-f(x^\star))-\frac{1}{2} \|x_k-x^\star\|^2\\
&=\tau h\varphi_{k+1} (f(x_{k+1})-f(x_k)) + \tau \eta_{k} (f(x_k)-f(x^\star))\\
& \ \ \ \ \ + \frac{1}{2} \|x_{k} - x^\star-\eta_k\mathcal{G}_{\text{MB}}(x_k,k)\|^2-\frac{1}{2} \|x_k-x^\star\|^2\\
&=\tau \varphi_{k+1} (f(x_{k+1})-f(x_k)) + \tau \eta_{k} (f(x_k)-f(x^\star))\\
& \ \ \ \ \ + \frac{\eta_k^2}{2} \|\mathcal{G}_{\text{MB}}(x_k,k)\|^2  - \eta_k\langle \mathcal{G}_{\text{MB}}(x_k,k),x_k-x^\star \rangle.\\
&\le  \tau \eta_0 \varphi_{k+1} (f(x_{k+1})-f(x_k)) + \tau h \eta_{k} (f(x_k)-f(x^\star)) \\ & \ \ \ \ \ + \frac{\eta_k^2}{2} \|\nabla f(x_k)+ \epsilon_k\|^2  - \eta_k\langle \nabla f(x_k), x_k-x^\star \rangle  - \eta_k\langle  \epsilon_k, x_k-x^\star \rangle.\\
&\le  \tau h \varphi_{k+1} (f(x_{k+1})-f(x_k)) + \frac{\eta_k^2}{2} \|\nabla f(x_k)+ \epsilon_k\|^2   - \eta_k\langle  \epsilon_k, x_k-x^\star \rangle,\\
\end{align*}
where in the second equality we added and subtracted $\tau \eta_{k} (f(x_k)-f(x^\star))$ (recall that for $k\ge0$, $h\varphi_{k+1} = \sum_{i=0}^{k}\eta_i)$ and in the second inequality the weak-quasi-convexity assumption.
Next, thanks to Lemma~\ref{lemma:discrete_time_1},
\begin{align*}
&\E[\EE(k+1)-\EE(k)]=\\  &\le  \tau h \varphi_{k+1} \E [f(x_{k+1})-f(x_k)] + \frac{\eta_k^2}{2} \E[\|\nabla f(x_k)\|^2]+ \frac{\eta_k^2}{2} \E [\|\epsilon_k\|^2 ] \nonumber\\ 
&=  \tau h \varphi_{k+1} \E [f(x_{k+1})-f(x_k)] + \frac{\eta_k^2}{2} \E [\|\nabla f(x_k)\|^2]+ \frac{\eta_k^2}{2} \E_{\mathcal{F}_{k-1}} [\|\epsilon_k\|^2 ] \nonumber\\ 
&\le \tau h \varphi_{k+1}  \E [f(x_{k+1})-f(x_k)] + \frac{\eta_k^2}{2}  \E\|\nabla f(x_k)\|^2+ \frac{\eta_k^2 d \sigma^2_*}{2 b_k} \\ 
&\le \tau h \varphi_{k+1} \left(\left(\frac{L\eta_k^2}{2}-\eta_k\right)\E\left[\|\nabla f (x_k)\|^2\right] +\frac{L d \sigma^2_* \eta_k^2 }{2 b_k}\right)+ \frac{\eta_k^2}{2} \E [\|\nabla f(x_k)\|^2]+ \frac{\eta_k^2 d \sigma^2_*}{2}\\
&\le\left( \frac{\eta_k^2}{2} + \tau \eta_0\varphi_{k+1} \left(\frac{L\eta_k^2}{2 b_k}-\eta_k\right)\right)\E\left[\|\nabla f (x_k)\|^2\right] + \frac{\eta_k^2 d \sigma^2_*(1+L\tau \varphi_{k+1})}{2 b_k}.
\end{align*}
If $h\le 2/L$, then $\frac{L\eta_k^2}{2}-\eta_k\le 0$. Moreover, under this condition, since for all $k\ge 0$ we have $\varphi_{k+1}\ge\eta_k$, it is clear that $\varphi_{k+1} \left(\frac{L\eta_k^2}{2}-\eta_k\right) \le \eta_k \left(\frac{L\eta_k^2}{2}-\eta_k\right)$. Hence

$$\E [\EE(k+1)-\EE(k)]\le \left( \frac{\eta_k^2}{2} + \tau \eta_k \left(\frac{L\eta_k^2}{2}-\eta_k\right)\right)\E\left[\|\nabla f (x_k)\|^2\right] + \frac{\eta_k^2 d \sigma^2_*(1+L\tau \varphi_{k+1})}{2 b_k}.$$

It is easy to see that $\frac{\eta_k^2}{2} + \tau \eta_k \left(\frac{L\eta_k^2}{2}-\eta_k\right)\le 0$ if and only if $h\le \frac{2\tau-1}{\tau L}$. Under this condition, since $\E\left[\|\nabla f (x_k)\|^2\right]\ge 0$,

$$\E [\EE(k+1)-\EE(k)]\le  \frac{\eta_k^2 d \sigma^2_*(1+L\tau \varphi_{k+1})}{2 b_k}.$$

 Finally, by linearity of integration,

\begin{multline*}
    \E [\EE(k+1)-\EE(0)] = \E \left[\sum_{i=0}^k \EE(i+1)-\EE(i)\right]\\ =  \sum_{i=0}^k\E[ \EE(i+1)-\EE(i)] =\frac{d \sigma^2 h^2}{2} \sum_{i=0}^{k}\frac{\psi_i^2 (1+L\tau \varphi_{i+1})}{b_i}.
\end{multline*}

The result then follows from the definition of $\EE$.
\end{proof}

The following proposition has to be compared to Thm.~\ref{prop:CONV_GF_PL} for MB-PGF.

\begin{frm-thm-app}
   Assume \textbf{(H)}, \textbf{(H$\boldsymbol{\sigma}$)}, \textbf{(H\begin{tiny}{P\L{}}\end{tiny})}. If $h\le 1/L$, then for all $k\ge 0$ we have:
\begin{multline*}
    \E\left[ (f(x_{k+1})-f(x^\star))\right]\le\\  \left(\prod_{i=0}^{k}(1-\mu \ h\psi_i)\right)(f(x_0)-f(x^\star))+ \frac{h \ d \ L \ \sigma^2_* }{2}\sum_{i=0}^k \frac{\prod_{\ell=0}^{k}(1-\mu \ h\psi_\ell)}{ \prod_{j=0}^{i}(1-\mu \ h\psi_l)} \frac{\psi_i^2}{b_i} h.
\end{multline*}
 \label{prop:CONV_GD_PL}
\end{frm-thm-app}

\begin{proof}
Starting from Lemma~\ref{lemma:discrete_time_1} we apply the P\L{} property. If $\frac{L\eta_k^2}{2}-\eta_k\le 0$, that is $\eta_k\le 2/L$ for all $k$, then
\begin{align*}
\E [f(x_{k+1})-f(x_k)]& \le \left(\frac{L\eta_k^2}{2}-\eta_k\right)\E\left[\|\nabla f (x_k)\|^2\right] +\frac{L d \sigma^2_* \eta_k^2 }{2}\\
& \le 2\mu \left(\frac{L\eta_k^2}{2}-\eta_k\right)\E[f(x_{k})-f(x^\star))] +\frac{L d \sigma^2_* \eta_k^2 }{2}
\end{align*}
Furthermore, if $\eta_k \le 1/L$ for all $k$ then $\frac{L\eta_k^2}{2} - \eta_k \le - \frac{\eta_k}{2 b_k}$:

\begin{equation}
\E [f(x_{k+1})-f(x_k)]\le -\mu \eta_k \E [f(x_k)-f(x^\star)]+\frac{L d \sigma^2_* \eta_k^2 }{2 b_k}.
\label{eq:PL_discrete_intermediate}
\end{equation}
 
Consider now the Lyapunov function inspired by the continuous time prospective (see Thm.~\ref{prop:CONV_GF_PL}):
\begin{equation*}
\EE(k) := 
\begin{cases}
\prod_{i=0}^{k-1}(1-\eta_i\mu)^{-1} (f(x_k)-f(x^\star))& k>0\\
(f(x_k)-f(x^\star)) & k=0
\end{cases}.
\end{equation*}
We have, for $k\ge 0$,
\begin{align*}
&\E [\EE(k+1)-\EE(0)]\\ &= \E \left[\sum_{i=0}^k \EE(i+1)-\EE(i)\right] \\
&=  \sum_{i=0}^k\E[ \EE(i+1)-\EE(i)] \\
&=  \sum_{i=0}^k \left( \prod_{j=0}^{i}(1-\eta_j\mu)^{-1}\right)\E\left[f(x_{i+1})-f(x^\star)- (1-\eta_{i}\mu)(f(x_i)-f(x^\star))\right] \\
&=  \sum_{i=0}^k \left( \prod_{j=0}^{i}(1-\eta_j\mu)^{-1}\right)\E\left[f(x_{i+1})-f(x_i)+\eta_{i}\mu(f(x_i)-f(x^\star))\right]. \\
\end{align*}
Using Lemma~\ref{lemma:discrete_time_1},
\begin{align*}
&\E [\EE(k+1)-\EE(0)]\\ 
&\le  \sum_{i=0}^k \left( \prod_{j=0}^{i}(1-\eta_j\mu)^{-1}\right)\left(-\mu \eta_i \E[f(x_i)-f(x^\star)]+\frac{L d \sigma_*^2 \eta_i^2 }{2 b_i}+\eta_{i}\mu\E[f(x_k)-f(x^\star)]\right) \\
&\le  \sum_{i=0}^k \left( \prod_{j=0}^{i}(1-\eta_j\mu)^{-1}\right)\frac{L d \sigma_*^2 \eta_i^2 }{2 b_i} ,\\
\end{align*}
where in the first inequality we used Eq.~\eqref{eq:PL_discrete_intermediate} .
By plugging in the definition of $\EE$, 

$$ \prod_{i=0}^{k}(1-\eta_i\mu)^{-1}  \E_{\mathcal{F}_k}\left[ (f(x_{k+1})-f(x^\star))\right]\le  f(x_0)-f(x^\star)+ \sum_{i=0}^k \left( \prod_{j=0}^{i}(1-\eta_j\mu)^{-1}\right)\frac{L d \sigma_*^2 \eta_i^2 }{2 b_i} $$

which gives the desired result.
\end{proof}
\newpage
\subsubsection{Asymptotic rates under decreasing adjustment factor}
Can be derived easily using the same arguments as in App.~\ref{sec:proofs_MB_PGF_decrease}, with the same final results.

\subsubsection{Limit sub-optimality under constant adjustment factor}
\label{sec:proofs_SGD_ball}
In this paragraph we pick $\psi_k = 1$ and $b_k = b$ for all $k$ and study the ball of convergence of SGD. The results can be found in Tb.~\ref{tb:summary_rates_constant_GD}. The only non-trivial limit is the one for P\L{} functions.

\begin{table*}
\centering
\begin{tabular}{| l | l | c |}
\hline
\textbf{Condition}& \textbf{Limit} & \textbf{Bound}
\\

\hline

\textbf{(H-)}, \textbf{(H$\boldsymbol{\sigma}$)}& $\lim_{k\to\infty}\E \left[\|\nabla f(x_{\tilde k})\|^2\right]$ &
 $\frac{L \ d \ \sigma_*^2 \ h}{b}$ \\
  & & \\

\hline
\textbf{(H-)}, \textbf{(H$\boldsymbol{\sigma}$)}, \textbf{(H\begin{tiny}{WQC}\end{tiny})}&\ $\lim_{k\to\infty}\E \left[ f(x_{\tilde k}) -f(x^\star)\right]$ & $\frac{L \ d  \ \sigma_*^2 \ h}{\tau \ b}$ \\
 & & \\
\hline
\textbf{(H-)}, \textbf{(H$\boldsymbol{\sigma}$)}, \textbf{(H\begin{tiny}{P\L{}}\end{tiny})} & $\lim_{k\to\infty}\E \left[f(x_k) -f(x^\star)\right]$  & $\frac{L \ d \ \sigma^2_* h}{2 \ \mu \ b}$ \\
 & & \\
\hline 
\end{tabular}
\caption{Ball of convergence of MB-PGF under constant $\psi_k = 1$ and $b_k = b$. For $k \ge 0$, $\varphi_{k+1}= \sum_{i=0}^k \psi_i$ and $\tilde k\in[0,k]$ is a random index picked with distribution $\psi_{j}/{\varphi_{j+1}}$ for $j \in\{0,\dots,k\}$ and $0$ otherwise.}
\label{tb:summary_rates_constant_GD}
\end{table*}

By direct calculation.

\begin{align*}
\E\left[ (f(x_{k+1})-f(x^\star))\right]&\le  (1-h\mu)^{k+1}(f(x_0)-f(x^\star))+ \frac{L d \sigma^2_* h^2}{2b}  \sum_{i=0}^k(1-h\mu)^i\\
&\le  (1-h\mu)^{k+1}(f(x_0)-f(x^\star))+ \frac{L d \sigma^2_* h^2}{2b}  \sum_{i=0}^\infty(1-h\mu)^i\\
&=  (1-h\mu)^{k+1}(f(x_0)-f(x^\star))+ \frac{L d \sigma^2_* h^2}{2hb\mu}. \\
\end{align*}
Where we used the fact that for any $\rho<1$, $\sum_{i=0}^\infty \rho_i = \frac{1}{1-\rho}$. The result then follows taking the limit.

\subsubsection{Convergence rates for VR-SGD (SVRG)}
\label{sec:rates_svrg}

\begin{frm-thm-app}
Assume \textbf{(H-)}, \textbf{(H\begin{tiny}{RSI}\end{tiny})} and choose $\xi_k = k-\sum_{j=1}^\infty\delta_{k-jm}$~(sawtooth wave), where $\delta$ is the Kronecker delta. Let $\{x_k\}_{k\ge0}$ be the solution to SGD with VR with additional jumps at times $(jm)_{j\in\N}$: we jump picking $x_{(j+1) m}$ uniformly in $\{x_k\}_{jm\le k< (j+1) m}$. Then,
\vspace{-2mm}
$$\E[\|x_{jm}-x^\star\|^2]=\left(\frac{1+2L^2h^2 m}{h m (\mu-3L^2 h)}\right)^j \|x_0-x^*\|^2.$$
\label{prop:CONV_VR_RSI_discrete}
\vspace{-3mm}
\end{frm-thm-app} 
\begin{proof} Start by computing
\begin{align*}
    &\frac{1}{2}\E\left[\|x_{k+1}-x^\star\|^2\right]\\
    &= \frac{1}{2}\E\left[\|x_k -x^\star - h \mathcal{G}_{\text{VR}}(k)\|^2\right]\\
    &= -h\E\left[\langle\nabla f(x_k), x_k-x^\star\rangle\right] +L^2 h^2 \E[\|\mathcal{G}_{\text{VR}}(k)\|^2]
    \end{align*}
    where we used the fact that $\mathcal{G}_{\text{VR}}$ is unbiased. Consider iterations $jm\le k\le j(m+1)$. Our choice of $\xi$ fixes the pivot to $x_{jm}$. Using smoothness, Cor.~\ref{cor:STOC_SVRG_var_QG} and the restricted-secant-inequality, we get,
    \begin{align*}
    &\frac{1}{2}\E\left[\|x_{k+1}-x^\star\|^2\right] -  \frac{1}{2}\E\left[\|x_k -x^\star\|^2\right]\\
    &\le -h\E\left[\langle\nabla f(x_k), x_k-x^\star\rangle\right] + L^2 h^2 \E\left[\|x_k-x^\star\|^2\right] + L^2 h^2 \E\left[\|x_{jm}-x^\star\|^2\right]\\    
&\le -\frac{h\mu}{2}\E\left[\|x_{k}-x^\star\|^2\right] + 2L^2 h^2 \E\left[\|x_k-x^\star\|^2\right] + L^2 h^2 \E\left[\|x_{jm}-x^\star\|^2\right]\\    
&= - \frac{h}{2}\left(\mu-2L^2 h\right)\E\left[\|x_{k}-x^\star\|^2\right]+ L^2 h^2 \E\left[\|x_{jm}-x^\star\|^2\right].\\ 
\end{align*}

Finally, summing from $jm$ to $j(m+1)$, we have
\begin{multline*}
    \frac{1}{2}\E\left[\|x_{j(m+1)}-x^\star\|^2\right] -  \frac{1}{2}\E\left[\|x_{jm} -x^\star\|^2\right] \\ \le- \frac{hm}{2}\left(\mu-2L^2 h\right)\frac{1}{m} \sum_{k=jm}^{jm+m-1}\E\left[\|x_{k}-x^\star\|^2\right]+ L^2 h^2 m \E\left[\|x_{jm}-x^\star\|^2\right].
\end{multline*}

Therefore, dropping the first term, 
$$\E\left[\|x_{(j+1)m}-x^\star\|^2\right] = \frac{1}{m}\sum_{k=jm}^{jm+m-1}\E\left[\|x_{k}-x^\star\|^2\right].$$
Redefining (\textit{jumping} to) $x_{j(m+1)} \sim \mathcal{U}(\{x_k\}_{jm\le k\le (j+1)m})$, we get $$\E[\|x_{j(m+1)}-x^\star\|^2]\le\frac{1+2L^2h^2 m}{h m (\mu-2L^2 h)}\E\left[\|x_{jm} -x^\star\|^2\right].$$
\end{proof}

\section{Time stretching}
\label{sec:time-change-proof}
\begin{frm-thm-app}[Restated Thm.~\ref{thm:time-change}]
Let $\{X(t)\}_{t\ge0}$ satisfy PGF and define $\tau(\cdot) = \varphi^{-1}(\cdot)$, where $\varphi(t) = \int_0^t\psi(s)ds$. For all $t\ge 0$, $X\left(\tau(t)\right) = Y(t)$ in distribution, where $\{Y(t)\}_{t\ge0}$ satisfies
$$dY(t)=-\nabla f(Y(t)) dt + \sqrt{\frac{h \  \psi(\tau(t))}{m(\tau(t))}} \sigma(\tau(t)) \ d\tilde B(t),$$
where $\{\tilde B(t)\}_{t\ge0}$ is a Brownian Motion.
\label{thm:time-change-app}
\end{frm-thm-app}

\begin{proof} By definition, $X(t)$ is such that
$$X(t) = -\int_0^t \psi(r)\nabla f(X(r)) \ dr + \int_0^t \psi(r)\sqrt{\frac{h}{m(r)}} \sigma(r) \ dB(r).$$
Therefore
$$X(\tau(t)) = -\underbrace{\int_0^{\tau(t)} \psi(r)\nabla f(X(r)) \ dr}_{:=A} + \underbrace{\int_0^{\tau(t)} \psi(r)\sqrt{\frac{h}{m(r)}} \sigma(r) \ dB(r)}_{:=B}.$$
Using the change of variable formula for Riemann integrals, we get
\begin{multline*}
    A =\int_0^{\tau(t)} \psi(r)\nabla f(X(r))dr = -\int_0^{t} \tau'(r) \cdot \psi(\tau(r)) \cdot  \nabla f(X(\tau(r))) dr=\\
    =\int_0^{t} \frac{\cancel{\psi(\tau(r))}}{\cancel{\psi(\tau(r))}}\nabla f(X(\tau(r))) dr.
\end{multline*}
Using the time change formula (Thm.~\ref{thm:oxsendall}) for stochastic integrals, with $v(r):= \psi(r)\sqrt{\frac{h}{m(r)}} \sigma(r)$,
\begin{multline*}
    B=\int_0^{\tau(t)} \psi(r)\sqrt{\frac{h}{m(r)}} \sigma(r) \ dB(r) = \int_0^{t} \frac{\psi(\tau(r))}{\sqrt{\tau'(r)}}\sqrt{\frac{h}{m(\tau(r))}} \sigma(\tau(r)) \ d\tilde B(r)=\\
    =\int_0^{t} \sqrt{\frac{h \  \psi(\tau(r))}{m(\tau(r))}} \sigma(\tau(r)) \ d\tilde B(r).
\end{multline*}

All in all, we have found that

$$X(\tau(t))=-\int_0^{t} \nabla f(X(\tau(r))) dr + \int_0^{t} \sqrt{\frac{h \  \psi(\tau(r))}{m(\tau(r))}} \sigma(\tau(r)) \ d\tilde B(r).$$

By Def.~\ref{def:SDE_ito_process}, this is equivalent to saying that $Y := X \circ \tau$ satisfies the differential in the theorem statement.
\end{proof}

\end{document}